\DeclareMathSymbol{\varChi}{\mathord}{letters}{88}
\definecolor{red}{rgb}{1.0,0.0,0.0}
\definecolor{blu}{rgb}{0.0,0.0,1.0}
\definecolor{gre}{rgb}{0.03,0.50,0.03}
\definecolor{white}{rgb}{1,1,1}
\def\white#1{{\textcolor{white}{#1}}}
\newcommand{\ud}{\,\mathrm{d}}
\def\K{\mathbb K}
\def\M{\mathbb M}
\def\R{\mathbb R}
\def\X{\mathbb X}
\def\Y{\mathbb Y}
\def\B{\mathbb B}
\def\N{\mathbb N}
\def\W{\mathbb W}
\def\Z{\mathbb Z}
\def\A{\mathbb A}
\def\S{\mathbb S}
\def\V{\mathbb V}
\def \shd{\mathcal D}
\def \shl{\mathcal L}
\def \shp{\mathcal P}
\newtheorem{Theorem}{Theorem}[section]
\newtheorem{Definition}[Theorem]{Definition}
\newtheorem{Proposition}[Theorem]{Proposition}
\newtheorem{Lemma}[Theorem]{Lemma}
\newtheorem{Corollary}[Theorem]{Corollary}
\newtheorem{Remark}[Theorem]{Remark}
\newtheorem{Example}[Theorem]{Example}
\newtheorem{Notation}[Theorem]{Notation}
\author{Giorgio Fabbri\footnote{Aix-Marseille Univ. (Aix-Marseille School of Economics), CNRS \&
EHESS. Centre de la vieille Charit\'e, 2 rue de la Charit\'e, 13002 Marseille, France. E-mail: giorgio.fabbri@univ-amu.fr}
 \; and \;  Francesco Russo\footnote{ENSTA ParisTech, Universit\'e Paris-Saclay, Unit\'e de Math\'ematiques appliqu\'ees, 828, boulevard des Mar\'echaux,
F-91120 Palaiseau.
E-mail: francesco.russo@ensta-paristech.fr} }
\title{Infinite dimensional weak Dirichlet processes and
convolution type processes.}
\date{June 10th 2016}
\begin{document}

\maketitle

\begin{abstract}
The present paper continues the study of infinite dimensional calculus via regularization, started by C. Di Girolami and the second named author,  introducing the notion of \emph{weak Dirichlet process}  in this context. Such a process $\X$, taking values in a Banach space $H$, is the sum of a local martingale and a suitable {\it orthogonal} process. \\ 
The concept of weak Dirichlet process fits the notion of \emph{convolution type processes}, a class
 including mild solutions for  stochastic evolution equations on infinite dimensional Hilbert spaces and in particular of 
several classes of  stochastic partial differential equations (SPDEs).\\
In particular the mentioned decomposition  appears to be a  substitute of an It\^o's type formula  applied to
 $f(t, \X(t))$ where $f:[0,T] \times H \rightarrow \R$  is a $C^{0,1}$ function and $\X$ a convolution type processes.

\end{abstract}

\bigskip

{\bf Key words and phrases:} Covariation and Quadratic variation; Calculus via regularization; Infinite dimensional analysis; Tensor analysis; Dirichlet processes;  Generalized Fukushima decomposition; Convolution type processes; Stochastic partial differential equations.

\medskip

{\bf 2010 AMS Math Classification:}    60H05, 60H15, 60H30, 35R60.

\bigskip

\section{Introduction}
Stochastic calculus via regularization for real processes was initiated in \cite{RussoVallois91} and \cite{RussoVallois93}.
 It is an efficient calculus for non-semimartingales whose related literature is  surveyed in \cite{RussoVallois07}. 
We briefly recall some essential notions from \cite{RussoVallois91} and \cite{RussoVallois93}. Let $T > 0$ and $\left (\Omega , \mathscr{F}, \mathbb{P} \right )$ be a complete probability space. Given $Y$ (respectively $X$) an a.s. bounded (respectively continuous) real process\footnote{In the whole paper, all the considered processes are supposed to be measurable from $[0,T] \times \Omega$ (equipped with the product of the Borel $\sigma$-field of $[0,T]$ and the $\sigma$-algebra of all events $\mathscr{F}$) and the value space, for instance $\R$, endowed with the Borel $\sigma$-field.} 
defined on $[0,T]$, the forward integral of $Y$ with respect to $X$  and the covariation between $Y$ and $X$ are defined as follows.
Suppose that, for every $t \in [0,T]$, the limit $I(t)$ (respectively $C(t)$) in probability exists:
\begin{eqnarray} \label{DefRealIntCov}
I(t) : &=& \lim_{\epsilon \to 0^+} \int_0^t Y(r) \left ( \frac{X(r+\epsilon) - 
X(r)}{\epsilon} \right ) \ud r,
\nonumber\\
& &  \\
C(t) :&=& \lim_{\epsilon \to 0^+} \int_{0}^{t} \frac{ \left (X({r+\epsilon})-
X({r}) \right )
\left (Y({r+\epsilon})-Y({r})\right )}{\epsilon}dr.   
\nonumber
\end{eqnarray}
If the random process $I$ (respectively $C$) admits a continuous version,  this is denoted  by  $\int_0^\cdot Y d^-X$ (respectively $[X,Y]$).  It is the {\it  forward integral} of $Y$ with respect to $X$ (respectively the {\it covariation} of $X$ and $Y$). If $X$ is a real continuous semimartingale and $Y$ is a c\`adl\`ag process which is progressively measurable (respectively  a semimartingale), the integral $\int_0^\cdot Y d^-X$  (respectively the covariation $[X,Y]$) is the same as the classical It\^o's integral (respectively covariation).

Real  processes  $X$ for which $[X,X]$ exists are called  {\it finite quadratic variation  processes}.
A rich class of finite quadratic variation processes is provided by Dirichlet processes.  Let $(\mathscr{F}_t, t \in [0,T])$ be a fixed filtration, fulfilling the usual conditions. A real process $X$ is said to be  {\it Dirichlet} (or {\it F\"ollmer-Dirichlet}) if it is the sum of a local martingale $M$ and a  {\it zero quadratic variation process} $A$, i.e. such that $[A,A] = 0$. Those processes were defined by H. F\"ollmer \cite{FoDir} 
using  limits of discrete sums.  A significant generalization, due to \cite{errami, GozziRusso06}, is the notion
of {\it weak Dirichlet process}, extended to the case of jump processes in \cite{coquet}.

\begin{Definition}
\label{def:Dirweak}
A real process $X \colon [0,T]\times \Omega \to \mathbb{R}$ is called weak Dirichlet process if it can be written as $X=M+A$,
where $M$ is a local martingale and $A$ is a process such that $\left[ A,N\right] =0$ for every continuous local martingale $N$ and $A(0)=0$.
\end{Definition}
Obviously a semimartingale  is a weak Dirichlet process. It can be proved (see Remark 3.5 of \cite{GozziRusso06}) that the decomposition described in Definition \ref{def:Dirweak} is unique.

Elements of calculus via regularization were extended to (real) Banach space valued
processes in a series of papers, see e.g. \cite{DiGirolamiRusso09, DGRNote, DiGirolamiRusso11Fukushima, DiGirolamiRusso11}. 
Two classical notions of stochastic calculus in Banach spaces, which appear in \cite{MetivierPellaumail80} and \cite{Dinculeanu00} are 
the scalar\footnote{The two mentioned monographs use the  term {\it real} instead of {\it scalar}; we changed it  to avoid confusion with the quadratic variation of real processes.} and tensor quadratic variations. We propose here, following \cite{DiGirolamiRusso11}, a definition of them using the regularization approach, even though, originally they appeared in a discretization framework.

\begin{Definition}
\label{def:per-nota}
  Consider a real separable Banach spaces $B$.
We say that a  process $\X\colon [0,T]\times \Omega \to B$,
a.s. square integrable,
 admits a {\bf scalar quadratic variation} if, for any $t\in [0,T]$,
 the limit, for $\epsilon\searrow 0$ of 
\[
[\X,\X]^{\epsilon, \mathbb{R}}(t):= \int_{0}^{t} \frac{\left 
|\X({r+\epsilon})-\X({r}) \right |^2_B}{\epsilon} dr
\]
exists in probability and it admits a continuous version. The limit process 
is called scalar quadratic variation of $\X$ and it is denoted  by  $[\X,\X]^{\mathbb{R}}$. 
\end{Definition}

\begin{Definition}
\label{def:tensorcovariation}
Consider two real separable Banach spaces $B_1$ and $B_2$. Suppose that either $B_1$ or $B_2$ is different from $\mathbb{R}$. Let  $\X\colon [0,T]\times \Omega \to B_1$ and  $\Y\colon [0,T]\times \Omega \to B_2$  be two
a.s. square integrable processes.
We say that $(\X,\Y)$ admits a {\bf tensor covariation} if the limit, for $\epsilon\searrow 0$ of the $B_1\hat\otimes_\pi B_2$-valued processes
\[
[\X,\Y]^{\otimes, \epsilon}:= \int_{0}^{\cdot} \frac{ \left (\X({r+\epsilon})-\X({r}) \right ) \otimes
\left (\Y({r+\epsilon})-\Y({r})\right )}{\epsilon}dr,
\]
exists ucp\footnote{Given a Banach space $B$ and a probability space 
$(\Omega, \mathbb{P})$ a family of processes $\X^\epsilon\colon \Omega
\times [0, T] \to B$ is said to converge in the ucp sense to
$\X\colon \Omega \times [0, T] \to B$, when $\epsilon$ goes to zero,
if $
\lim_{\epsilon \to 0} 
\sup_{t\in [0,T]} |\X^\epsilon_t - \X_t|_B = 0,
$
in probability. Observe that we use the convergence in probability in the definition of covariation when the two processes are real and ucp convergence, when either $\X$ or $\Y$ is not one-dimensional. When $\X=\Y$ the two definitions are equivalent (see Lemma 2.1 of \cite{RussoVallois07}).}.
 The limit process is called {\bf tensor covariation} of $(\X,\Y)$ 
and is denoted  by  $[\X,\Y]^\otimes$. The tensor covariation $[\X,\X]^\otimes$ is called {\bf tensor quadratic variation} of $\X$ and
is denoted  by  $[\X]^\otimes$.
\end{Definition}

The concepts of scalar and  tensor quadratic variation are too strong in certain contexts: several interesting examples of Banach (or even Hilbert) space valued processes have no tensor quadratic variation. For this reason Di Girolami and Russo introduced (see for instance Definition 3.4 of \cite{DiGirolamiRusso11Fukushima}) the notion of $\chi$-covariation. It is recalled in Definition \ref{def:covariation} and widely use in this work. 
Their idea was to introduce a suitable space $\chi$ continuously embedded into the dual of the  projective tensor space $B_1 \hat \otimes_\pi B_2,$ called \emph{Chi-subspace} and to introduce the notion of \emph{$\chi$-quadratic variation} (and of $\chi$-covariation, denoted by $[\cdot,\cdot]_{\chi}$), recalled in  Section \ref{sub4.2}.

An interesting example of Banach space valued process with no tensor quadratic variation is the $C([-\tau,0])$-valued process $\X$ defined as the  frame (or window) of a standard Brownian motion (see \cite{DiGirolamiRusso11}).


A second example, that indeed constitutes a main motivation for the present paper, is given by mild solutions of stochastic evolution equations in infinite dimensions: they have no scalar quadratic variation even if driven by a one-dimensional Brownian motion. We briefly recall their definition.



Consider a stochastic evolution equations of the form
\begin{equation}\label{eq:SPDEIntro}
\left \{
\begin{array}{l}
\ud \X(t) = \left ( A\X(t)+ b_{0}(t,\X(t)) \right ) \ud t + \sigma_{0}(t,\X(t)) \ud
 \W_Q(t)\\[5pt]
\X(0)=x,
\end{array}
\right.
\end{equation}
characterized by a generator of a $C_0$-semigroup $A$, Lipschitz coefficients $b$ and $\sigma$ and a $Q$-Wiener process (with respect to some covariance operator $Q$) $\W$.

As described for example in Part III of \cite{DaPratoZabczyk96}, several families of partial differential equations with stochastic forcing terms or coefficients (SPDEs) can be reformulated as stochastic evolution equations and then can be studied in the general abstract setting; of course for any of them the specification of the generator $A$ and of the functions $b_{0}$ and $\sigma_{0}$ are different. Examples of SPDEs that can be rewritten in the form \eqref{eq:SPDEIntro} (see e.g. \cite{DaPratoZabczyk96} Part III) are stochastic heat (and more general parabolic) equations with Dirichlet or Neumann boundary conditions, wave equations, delay equations, reaction-diffusion equations.

There are some different possible ways to define what we mean by \emph{solution} of (\ref{eq:SPDEIntro}), among them there is the notion of \emph{mild solution}. 
A progressively measurable process $\X(t)$ is a mild solution of (\ref{eq:SPDEIntro}) (see \cite{DaPratoZabczyk92} Chapter 7 or \cite{GawareckiMandrekar10} Chapter 3) if 
it is the solution of the following integral equation
\[
\X(t) = e^{tA}x + \int_0^t e^{(t-r)A} b_{0}(r,\X(r)) \ud r + \int_0^t e^{(t-r)A} \sigma_{0}(r,\X(r)) \ud \W_Q(r).
\]
This concept is widely used in the literature. Mild solutions are particular cases of convolution type processes defined in Section \ref{sec:SPDEs}.


\subsection*{The contributions of the work}

The novelty of the present paper arises both at the level of the stochastic calculus and of the infinite dimensional stochastic differential equations (and more in general convolution type processes).

The stochastic calculus part starts (Sections \ref{SecStochInt}) with a natural extension (Definition \ref{def:forward-integral}) of the notion of forward integral in Banach spaces introduced in \cite{DiGirolamiRusso09} and with the proof of its equivalence with the classical notion of integral when we integrate a predictable process with respect to a local martingale (Theorem \ref{th:integral-forrward=ito-martingale}). We also prove (Proposition \ref{P25}) that, under suitable hypotheses,  forward integrals also extend Young type integrals.

In Section
 \ref{SecTensor}, we extend  the notion of Dirichlet process to 
infinite dimensions. According to the literature, an Hilbert space-valued stochastic process can be 
naturally  considered to be an {\it infinite dimensional Dirichlet process} if it is 
the sum of a local martingale  and a zero energy process.
A {\it zero energy process} (with some light sophistications) is a process such that 
the expectation of the quantity in Definition \ref{def:per-nota}
converges to zero when $\varepsilon$ goes to zero. This happens for instance
in \cite{Denis},  even  though that decomposition also appears
in \cite{maroeck} Chapter VI Theorem 2.5, for processes associated with 
an infinite-dimensional Dirichlet form.

Extending  F\"ollmer's notion of Dirichlet
process to infinite dimensions, a process $\X$ taking values in 
a Hilbert space $H$, could be called
{\it Dirichlet} if it is the sum of a local martingale $\M$
plus a process $\A$ having a zero scalar quadratic variation. However that natural notion is not suitable 
for an  efficient stochastic calculus for  infinite dimensional
 stochastic differential equations. 
Indeed solutions of SPDEs are in general no of such form, so this prevents to use It\^o type formulae
or generalized Doob-Meyer decompositions.

Using the notion of $\chi$-finite quadratic variation process introduced in \cite{DiGirolamiRusso11Fukushima} we introduce the notion of $\chi$-Dirichlet process as the sum of a 
local martingale $\M$ and a process $\A$ having a zero
 $\chi$-quadratic variation.

A completely new notion in the present paper 
is the one of  Hilbert valued {\it $\nu$-weak Dirichlet process}
which is
 again related to a Chi-subspace $\nu$ of 
the dual of the projective tensor  product  $H \hat \otimes_\pi H_1$
where $H_1$ is another Hilbert space, see Definition 
\ref{def:chi-weak-Dirichlet-process}. 
It is of course an extension of the notion of 
real-valued weak Dirichlet process, see Definition \ref{def:Dirweak}.
We illustrate that notion in the simple case when $H_1 = \R$,
$\nu = \nu_0\hat\otimes_\pi \mathbb{R} \equiv \nu_0$
  and $\nu_0$ is a Banach space continuously embedded in $H^*$: a
 process $\X$ is called  $\nu$-weak Dirichlet process if it is the sum
 of a local martingale $\M$ and a process $\A$ such that
 $[\A, N]_{\nu}=0$ for every real continuous local martingale $N$. 
This happens e.g. under the following assumptions.
\begin{itemize}
 \item[(i)] There is a family $(R(\epsilon), \epsilon > 0 )$ of non-negative random variables converging in probability, such
that $ Z(\epsilon) \le R(\epsilon), \epsilon > 0$, where
$$ Z(\epsilon) := \frac{1}{\epsilon} \int_0^T |\A(r+\epsilon) - \A(r)|_{\nu_0^*} |N(r+\epsilon) - N(r)| \ud r.$$
\item[(ii)] 
For all $h\in \nu_0$, $\lim_{\epsilon \to 0^+} \frac{1}{\epsilon}
 \int_0^t \,_{\nu_0}\left\langle \A(r+\epsilon) - \A(r), h\right\rangle_{\nu_0^*} 
(N(r+\epsilon) - N(r)) \ud r =0, \ \forall t \in [0,T].$
\end{itemize}
The  stochastic calculus theory developed in Sections \ref{SecStochInt} and \ref{SecTensor},
 allows to prove, in Theorem \ref{th:exlmIto}, a general It\^o's formula for 
convolution type processes and to show that they are 
 ${\chi}$-Dirichlet processes and a ${\nu}$-weak-Dirichlet processes; this is done in Corollary \ref{cor:X-barchi-Dirichlet}. 
The most important result
 is however Theorem \ref{th:prop6}. 
It generalizes to the Hilbert values framework, 
Proposition 3.10  of \cite{GozziRusso06} which states
that given $f:[0,T] \times \R \rightarrow \R$ of class
$C^{0,1}$ and $X$ is a weak Dirichlet process with
finite quadratic variation then
$Y(t) = f(t, X(t))$ is a real weak Dirichlet process. 
Our result is a Fukushima decomposition in the spirit of Dirichlet forms,
 which is the natural extension of Doob-Meyer
decomposition for semimartingales.
It can also be seen as a substitution-tool of It\^o's formula if $f$ is not smooth.
In particular, given some $H$-valued process $\X$, it allows 
to expand $f(t,\X(t))$
if $\X$ is a $\nu$-weak Dirichlet process which also has a $\chi$-quadratic variation.
In particular it fits the case when $\X$ is convolution type process.

 An important application consists in an application to stochastic control with state equation
given by the solution of an infinite dimensional stochastic evolution equation. This is the
object of \cite{FabbriRusso-preprint} where, denoted by $v$ a solution of the Hamilton-Jacobi-Bellman equation associated to
 the problem and by $\X$ a mild solution of the state equation,  the characterization of the process $t\mapsto v(t,\X(t))$ as 
a real weak Dirichlet process is used to prove a verification type theorem.

\bigskip

The scheme of the work is the following: in Section \ref{SecStochInt} we introduce the definition of forward integral  with values in Banach spaces and we discuss the relation with the Da Prato-Zabczyk and Young integrals in the Hilbert framework. Section \ref{SecTensor}, devoted to stochastic calculus, is the core of the paper: we introduce the concepts of  $\chi$-Dirichlet processes, $\nu$-weak-Dirichlet processes and we study their general properties. In Section \ref{sec:SPDEs}, the developed theory is applied to  the case of convolution type processes. 
In Appendix \ref{sec:preliminaries} we collect some useful results on projective tensor products of Hilbert spaces.
  

\section{Stochastic integrals}

\label{SecStochInt}

\subsection{Probability and stochastic processes}

In the whole paper we denote by $\left( \Omega,\mathscr{F},\mathbb{P}\right)$  a complete probability space and by $\left \{ \mathscr{F}_t \right \}_{t\geq 0}$ a filtration on $\left (\Omega , \mathscr{F}, \mathbb{P} \right )$ satisfying the usual conditions.
Given  $\tilde\Omega \in \mathscr{F}$ we denote  by  $I_{\tilde\Omega}\colon \Omega \to \{0,1\}$ the characteristic function of the set $\tilde\Omega$.
Conformally to the Appendix, given two (real) Banach spaces 
$B_1, B_2$ $\shl(B_1;B_2)$ will denote, as usual the space of linear bounded maps from $B_1$ to $B_2$.
Given a  Banach space $B$ we denote by $\mathscr{B}(B)$ the Borel $\sigma$-field on $B$. We fix $T>0$. 

By default we assume that all the processes $\X \colon [0,T]\times \Omega \to B$ are measurable functions with respect to the product $\sigma$-algebra $\mathscr{B}([0,T]) \otimes \mathscr{F}$ with values in $(B, \mathscr{B}(B))$. The dependence of a process on the variable $\omega\in\Omega$ is emphasized only if needed by the context. When we say that a process is 
continuous (respectively left continuous, right continuous, c\`adl\`ag, c\`agl\`ad ...) we mean that almost all its paths are continuous (respectively left-continuous, right-continuous, c\`adl\`ag, c\`agl\`ad...).

Let $\mathscr{G}$ be a sub-$\sigma$-field of $\mathscr{B}([0,T])\otimes \mathscr{F}$.
We say that such a process  $\X: ([0,T]\times \Omega,\mathscr{G}) \to B$ is 
measurable with respect to $ \mathscr{G}$ if it is the measurable in the usual sense.
It is said {\it strongly (Bochner) measurable} (with respect to $ \mathscr{G}$) if it
is the limit of $\mathscr{G}$-measurable countable-valued functions.  
We recall that if $\X$ is  measurable and $\X$  is c\`adl\`ag, c\`agl\`ad  or if $B$ is separable then $\X$ is strongly measurable. 
The $\sigma$-field $\mathscr{G}$  will not be mentioned when it is clearly designated. We denote  by  $\mathcal{P}$ the predictable $\sigma$-field on  $[0,T] \times \Omega$.
The processes $\X$ measurable on $(\Omega \times [0,T], {\mathcal P})$ are also called {\it predictable} processes. All those processes will be considered as strongly measurable, with respect to $\shp$.
Each time we use expressions as ``adapted'', 
''predictable'' etc... we will always mean ``with respect to the filtration $\left \{ \mathscr{F}_t \right \}_{t\geq 0}$''.
 
The \emph{blackboard bold} letters $\X$, $\Y$, $\M$... are used for Banach (or Hilbert)-space valued) processes, while notations $X$ (or $Y$, $M$...) are reserved for real valued processes. We also adopt the notations introduced in Appendix \ref{sec:preliminaries}.

\begin{Notation} \label{Not00}
We always assume the following convention: when needed all the Banach space  c\`adl\`ag processes (or functions) indexed by $[0,T]$ are extended setting $\X(t)=\X(0)$ for $t\leq 0$ and $\X(t)=\X(T)$ for $t\geq T$.
\end{Notation}

\begin{Definition}
\label{def:forward-integral}
Let $B_1$ and $B_2$ be two real separable Banach spaces. Let $\X\colon \Omega \times [0,T] \to \mathcal{L}(B_2,B_1)$ and $\Y\colon 
\Omega \times [0,T] \to B_2$ be two stochastic processes. Assume that $\Y$ 
is continuous and that $\mathbb P$-almost all
 trajectories of $\X$ are Bochner integrable.

If for almost every $t\in [0,T]$ the following limit  (in the norm of the space $B_1$) exists in probability
\begin{equation} \label{EForw}
\int_0^t \X(r) \ud ^- \Y(r):= \lim_{\epsilon \to 0^+} \int_0^t \X(r) 
\left (\frac{\Y(r+\epsilon) - \Y(r)}{\epsilon} \right ) \ud r
\end{equation}
and the random function $t\mapsto \int_0^t \X(r) \ud ^- \Y(r)$ admits a continuous (in $B_1$) version, we say that $\X$ is forward integrable with respect to $\Y$. That  version of $\int_0^\cdot \X(r) \ud ^- \Y(r)$ is called \emph{forward integral of $\X$ with respect to $\Y$}.
Replacing $\int_0^t \X(r) \ud ^- \Y(r)$ with $\int_0^t \X(r) \ud ^+ \Y(r)$ and
$\frac{\Y(r+\epsilon) - \Y(r)}{\epsilon}$ 
with $\frac{\Y(r) - \Y(r-\varepsilon)}{\epsilon}$
in \eqref{EForw},  $\int_0^\cdot \X(r) \ud ^+ \Y(r)$ is called \emph{backward integral of $\X$ with respect to $\Y$}.
\end{Definition}

The definition  above is a natural generalization of that given  in \cite{DiGirolamiRusso09} Definition 3.4; there the forward integral is a real valued process. 

\subsection{The semimartingale case}
Let  $H$ and $U$ be two separable Hilbert spaces; we adopt the notations introduced in Appendix \ref{sec:preliminaries}. An $U$-valued measurable process $\M \colon [0,T] \times \Omega \to U$  is called martingale if, for all $t\in [0,T]$, $\M$ is $\mathscr{F}_t$-adapted with $\mathbb{E} \left [ |\M(t)| \right ] <+\infty$ and $\mathbb{E}\left [ \M(t_2)  |\mathscr{F}_{t_1} \right ] = \M(t_1)$ for all $0\leq t_1 \leq t_2 \leq T$. The concept of (conditional) expectation for $B$-valued processes, for a  separable Banach space $B$, is recalled for instance in  \cite{DaPratoZabczyk92} Section 1.3. 
As in finite dimensions, local martingales are defined by usual techniques of localization. All the considered martingales and local martingales will be continuous.

We denote  by  $\mathcal{M}^2(0,T; H)$ the linear space of square integrable
 martingales equipped with the norm $|\M|_{\mathcal{M}^2(0,T; U)} := \left ( \mathbb{E} \sup_{t\in [0,T]} |\M(t)|^2 \right )^{1/2}$. It is a Banach space as stated in \cite{DaPratoZabczyk92}, Proposition 3.9.



Given a  local martingale $\M \colon [0,T] \times \Omega \to U$, the process $|\M|^2$ is a real local sub-martingale, see Theorem 2.11 in \cite{KrylovRozovskii07}. 
The increasing predictable process, vanishing at zero, appearing in the Doob-Meyer decomposition of $|\M|^2$
is denoted by $[\M]^{\mathbb{R}, cl}(t), t\in [0,T]$. It  is of course uniquely determined and continuous.

We recall some properties of the It\^o stochastic integral with respect to a local martingale $\M$. 
Call $\mathcal{I}_\M(0,T;H)$ the set of the processes $\X\colon [0,T]\times \Omega \to \mathcal{L}(U;H)$ that are 
 strongly measurable from $([0,T]\times \Omega, \mathcal{P})$ to $\mathcal{L}(U;H)$ and such that

\[
\|\X\|_{\mathcal{I}_\M(0,T;H)} := \left (\mathbb{E} \int_0^T \| \X(r) \|_{\mathcal{L}(U;H)}^2 \ud [\M]^{\mathbb{R}, cl}(r) \right )^{1/2} < +\infty.
\]
$\mathcal{I}_\M(0,T;H)$ endowed with the norm $\|\cdot\|_{\mathcal{I}_\M(0,T;H)}$ is a Banach space. The linear map
\[
\left \{
\begin{array}{l}
I\colon \mathcal{I}_\M(0,T;H) \to \mathcal{M}^2(0,T; H)\\
\X \mapsto \int_0^{\cdot} \X(r) \ud \M(r)
\end{array}
\right .
\]
is a contraction, see e.g. \cite{Metivier82} Section 20.4 (above Theorem 20.5).  As illustrated in  \cite{KrylovRozovskii07} Section 2.2 (above Theorem 2.14), the stochastic integral with respect to  $\M$ extends to the integrands $\X$ which are measurable from $([0,T]\times \Omega, \mathcal{P})$ to $\mathcal{L}(U;H)$ and such that
\begin{equation} \label{EChainRule}
\int_0^T \| \X(r) \|_{\mathcal{L}(U;H)}^2 \ud [\M]^{\mathbb{R}, cl}(r) < +\infty \qquad a.s.
\end{equation}

By $\mathcal{J}^2(0,T; U,H)$ we denote the family of integrands with respect to  $\M$ that satisfy (\ref{EChainRule}).

\begin{Theorem}
\label{th:integral-forrward=ito-martingale}
Let us consider a continuous local martingale $\M\colon [0,T]\times{\Omega} \to U$ and a c\`agl\`ad predictable $\mathcal{L}(U,H)$-valued process $\X$ satisfying (\ref{EChainRule}).
Then, the forward integral $\int_0^\cdot \X(r) \ud^- \M(r)$, defined in Definition \ref{def:forward-integral} exists and coincides with the It\^o integral $\int_0^\cdot \X(r) \ud \M(r)$.
\end{Theorem}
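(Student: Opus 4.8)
The plan is to reduce everything to the square-integrable setting by localization, to rewrite the $\epsilon$-approximation of the forward integral as a genuine Itô integral through a stochastic Fubini argument, and then to pass to the limit by means of the Itô isometry. Since both the forward and the Itô integrals are local objects, I first localize: introducing the stopping times
\[
\tau_n := \inf\{t\geq 0 \,:\, \|\X(t)\|_{\mathcal{L}(U;H)}\geq n \ \text{ or } \ |\M(t)|_U \geq n \ \text{ or } \ [\M]^{\mathbb{R},cl}(t)\geq n\}\wedge T,
\]
which increase to $T$ because $\X$ is c\`agl\`ad (hence locally bounded) and $[\M]^{\mathbb{R},cl}$ is continuous, it is enough to treat the case where $\X$ is bounded by a constant, $\M\in\mathcal{M}^2(0,T;U)$ and $[\M]^{\mathbb{R},cl}(T)$ is bounded; in particular $\X\in\mathcal{I}_\M(0,T;H)$.

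Fix then $t\in[0,T]$. Writing $\M(r+\epsilon)-\M(r)=\int_0^T I_{(r,r+\epsilon]}(s)\,\ud\M(s)$ (with the freezing convention of Notation \ref{Not00}) and exchanging the Lebesgue integral in $r$ with the stochastic integral in $s$ by a stochastic Fubini theorem, legitimate here because $\X$ is bounded, I obtain
\[
\int_0^t \X(r)\,\frac{\M(r+\epsilon)-\M(r)}{\epsilon}\,\ud r \;=\; \int_0^T \X^{\epsilon,t}(s)\,\ud\M(s), \qquad \X^{\epsilon,t}(s):=\frac1\epsilon\int_{(s-\epsilon)\vee 0}^{s\wedge t}\X(r)\,\ud r.
\]
Thus the whole $\epsilon$-approximation appearing in \eqref{EForw} is realized as an honest Itô integral of the smoothed, predictable integrand $\X^{\epsilon,t}$.

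It then remains to prove that $\X^{\epsilon,t}\to \X\,I_{(0,t]}$ in $\mathcal{I}_\M(0,T;H)$ as $\epsilon\to 0^+$. For every $s\in(0,t)$ the average $\frac1\epsilon\int_{s-\epsilon}^s\X(r)\,\ud r$ converges to the left limit $\X(s^-)=\X(s)$, precisely because $\X$ is c\`agl\`ad; this is where left-continuity is essential, and why no Lebesgue-point argument is needed. The contributions on the residual window $(t,t+\epsilon]$ are controlled by the boundedness of $\X$ together with $[\M]^{\mathbb{R},cl}(t+\epsilon)-[\M]^{\mathbb{R},cl}(t)\to 0$, which follows from the continuity of $[\M]^{\mathbb{R},cl}$. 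Since moreover $\|\X^{\epsilon,t}(s)\|_{\mathcal{L}(U;H)}$ is uniformly bounded and $[\M]^{\mathbb{R},cl}(T)$ is bounded, dominated convergence with respect to $\ud[\M]^{\mathbb{R},cl}\otimes\ud\mathbb{P}$ yields the claimed convergence in $\mathcal{I}_\M(0,T;H)$. By the contraction property of the Itô integral recalled above, this forces $\int_0^t\X(r)\frac{\M(r+\epsilon)-\M(r)}{\epsilon}\ud r\to \int_0^t\X(r)\,\ud\M(r)$ in $L^2(\Omega;H)$, hence in probability, for each $t$; as $\int_0^\cdot\X\,\ud\M$ is continuous, Definition \ref{def:forward-integral} is satisfied and the forward integral coincides with the Itô integral (one checks the convergence is in fact ucp).

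The main obstacle is the coexistence of two different ``clocks'': the smoothing defining $\X^{\epsilon,t}$ is a Lebesgue average in $r$, whereas the norm of $\mathcal{I}_\M(0,T;H)$ integrates against the random measure $\ud[\M]^{\mathbb{R},cl}$. Reconciling the two is exactly what the localization to bounded $\X$ and bounded $[\M]^{\mathbb{R},cl}(T)$ achieves, making the dominated convergence legitimate, while the c\`agl\`ad hypothesis is what secures pointwise convergence of the averages at every interior point. The other delicate step is to justify the operator-valued stochastic Fubini theorem directly for $\mathcal{L}(U;H)$-valued integrands, rather than merely invoking its scalar counterpart.
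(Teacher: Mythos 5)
Your proof is correct, and its skeleton matches the paper's: rewrite the $\epsilon$-regularization as an It\^o integral of an averaged integrand via a stochastic Fubini theorem, prove convergence of that averaged integrand in $\mathcal{I}_\M(0,T;H)$, conclude by the contraction property of the It\^o map, and use localization to cover integrands satisfying only \eqref{EChainRule}. The genuine difference lies in the convergence step and in the order of operations. The paper localizes \emph{last}: it first proves the identity for an arbitrary $\X\in\mathcal{I}_\M(0,T;H)$, with no path regularity assumed, and for that it needs Stein's maximal inequality \eqref{eq:Este} together with the vector-valued Lebesgue differentiation theorem of \cite{DiestelUhl77} to obtain \eqref{eq:EIM}. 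You localize \emph{first}, to bounded $\X$ and bounded $[\M]^{\mathbb{R},cl}(T)$, and then exploit the c\`agl\`ad hypothesis directly to get convergence of the left averages $\frac{1}{\epsilon}\int_{s-\epsilon}^{s}\X(r)\,\ud r\to\X(s^-)=\X(s)$ at \emph{every} interior point, after which dominated convergence with respect to the finite measure $\ud[\M]^{\mathbb{R},cl}\otimes\ud\mathbb{P}$ applies immediately. Your variant is more elementary (no maximal function, no differentiation theorem) and it cleanly handles a point the paper glosses over: the differentiation theorem yields convergence only for Lebesgue-a.e.\ $r$, whereas the integral in \eqref{eq:EIM} is taken against $\ud[\M]^{\mathbb{R},cl}$, which need not be absolutely continuous with respect to Lebesgue measure; your everywhere-convergence (combined with continuity of $[\M]^{\mathbb{R},cl}$, so that single points are null) avoids this mismatch entirely. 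What the paper's route buys in exchange is a stronger intermediate statement --- the forward and It\^o integrals coincide for \emph{every} integrand in $\mathcal{I}_\M(0,T;H)$, c\`agl\`ad or not --- which your argument does not give, since left-continuity is essential to it. Two minor remarks: the map $I$ is a contraction rather than an isometry (your final argument uses it correctly, though your opening plan says ``It\^o isometry''); and your proof, like the paper's, ultimately rests on a stochastic Fubini theorem for operator-valued integrands (the paper invokes \cite{Leon90}), so the ``delicate step'' you flag at the end is a reliance you share with the paper rather than an additional gap of your own.
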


\begin{proof}[Proof of Theorem \ref{th:integral-forrward=ito-martingale}]
We follow the arguments related to the finite-dimensional case, see Theorem 2 of \cite{RussoVallois07}. Suppose first that $\X\in\mathcal{I}_\M(0,T;H)$. In this case it satisfies the hypotheses of the stochastic Fubini theorem in the form given in \cite{Leon90}. We have
\[
\int_0^t \X(r) \frac{\M(r+\epsilon) - \M(r)}{\epsilon} \ud r = \int_0^t \X(r) \frac{1}{\epsilon} 
\left ( \int_r^{r+\epsilon} \ud \M(\theta) \right) \ud r;
\]
applying the stochastic Fubini Theorem, the expression above is equal to
\[
\int_{0}^t \left ( \frac{1}{\epsilon}\int_{\theta-\epsilon}^{\theta} \X(r) \ud r  \right) \ud \M(\theta) + R_\epsilon(t),
\]
where $R_\epsilon(t)$ is a boundary term that converges to $0$ in probability, for any $t \in [0,T]$, so that we can ignore it. We can apply now the maximal inequality stated in \cite{Stein70}, Theorem 1: there exists a universal constant $C>0$ such that, for every $f\in L^2([0,t];\mathbb{R})$,
\begin{equation}
\label{eq:Este}
\int_0^t \left ( \sup_{\epsilon \in (0,1]} \left \vert \frac{1}{\epsilon} \int_{(r-\epsilon)
}^r f(\xi)  \ud \xi \right \vert \right )^2 \ud r \leq C \int_0^t f^2(r) \ud r.
\end{equation}
According to the vector valued version of the Lebesgue differentiation 
Theorem (see Theorem II.2.9 in \cite{DiestelUhl77}), the quantity $\frac{1}{\epsilon}\int_{(r-\epsilon)}^r \X(\xi) \ud \xi$ converges $\ud \mathbb{P} \otimes \ud r$ a.e. to $\X(r)$. Consequently (\ref{eq:Este}) and dominated convergence theorem imply
\begin{equation}
\label{eq:EIM}
\mathbb{E} \int_0^t \left \| \left ( \frac{1}{\epsilon}\int_{\theta-\epsilon}^{\theta} \X(r) \ud r \right ) -  \X(\xi) \right \|^2_{\mathcal{L}(U,H) } \ud [\M]^{\mathbb{R}, cl} (\xi) \; \xrightarrow{\epsilon\to 0} 0. 
\end{equation}
Since the integral is a contraction from $\mathcal{I}_\M(0,T;H)$ to $\mathcal{M}^2(0,T; H)$, this shows the claim for the processes in $\mathcal{I}_\M(0,T;H)$. If $\X$ is a c\`agl\`ad adapted process (and then a.s. bounded and therefore in ${J}^2(0,T; U,H)$) we use the same argument after localizing the problem by using the suitable sequence of stopping times defined by $\tau_n:= \inf \left \{ t \in [0,T] \; : \;  \int_0^t \left \| \X(r) \right \|^2_{\mathcal{L}(U,H)}  \ud [\M]^{\mathbb{R}, cl}(r)  \geq n  \right \}$ (and $+\infty$ if the set is void).
\end{proof}

An easier but still important statement  concerns the integration with respect bounded variation processes.
\begin{Proposition} \label{ItoBV}
 Let us consider a continuous bounded variation process $\V \colon [0,T]\times{\Omega} \to U$
and  let $\X$ be a c\`agl\`ad measurable  process  $[0,T] \times \Omega \rightarrow {\mathcal L}(U,H)$. 
Then the forward integral $\int_0^\cdot \X(r) \ud^- \V(r)$, defined in Definition \ref{def:forward-integral} exists and coincides with the Lebesgue-Bochner integral $\int_0^\cdot \X(r) \ud \V(r)$.
\end{Proposition}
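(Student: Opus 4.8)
The plan is to imitate the proof of Theorem \ref{th:integral-forrward=ito-martingale}, but to exploit the fact that no martingale is now involved, so that the whole argument can be run pathwise: I would replace the stochastic Fubini theorem by the ordinary Bochner-valued one, and the Stein maximal inequality by plain dominated convergence against the finite total variation measure $\ud|\V|$. First I would fix $\omega$ and record the two easy structural facts: since $\X(\cdot,\omega)$ is c\`agl\`ad on the compact interval $[0,T]$ it is bounded, say by $K:=\sup_{r}\|\X(r)\|_{\mathcal{L}(U,H)}$, and (being c\`agl\`ad) strongly measurable, so that the Lebesgue--Bochner integral $\int_0^t\X(r)\,\ud\V(r)$ makes sense; moreover, continuity of $\V$ makes $t\mapsto\int_0^t\X\,\ud\V$ continuous. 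Thus it will be enough to establish, for fixed $t$, the pathwise convergence of the $\epsilon$-approximations of \eqref{EForw} to this integral, pathwise convergence giving in particular convergence in probability.

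The core computation is a Fubini exchange. Writing $\V(r+\epsilon)-\V(r)=\int_{(r,r+\epsilon]}\ud\V(\theta)$ and swapping the order of integration (legitimate since $\X$ is bounded and $\V$ has finite variation), I would rewrite
\[
\int_0^t\X(r)\,\frac{\V(r+\epsilon)-\V(r)}{\epsilon}\,\ud r
=\int_{(\epsilon,t]}\Big(\frac{1}{\epsilon}\int_{\theta-\epsilon}^{\theta}\X(r)\,\ud r\Big)\ud\V(\theta)+R_\epsilon(t),
\]
where $R_\epsilon(t)$ collects the contributions of $\theta\in(0,\epsilon]$ and $\theta\in(t,t+\epsilon]$. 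On those two small regions the inner average is bounded in norm by $K$, so $R_\epsilon(t)$ is controlled by $K\big(|\V|((0,\epsilon])+|\V|((t,t+\epsilon])\big)$, which tends to $0$ because the continuity of $\V$ renders the variation measure atomless.

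For the main term I would invoke the c\`agl\`ad hypothesis in an essential way: left-continuity of $\X$ forces $\frac{1}{\epsilon}\int_{\theta-\epsilon}^{\theta}\X(r)\,\ud r\to\X(\theta)$ for \emph{every} $\theta$, and not merely $\ud r$-a.e. Since these averages stay bounded by $K$ and $\ud|\V|$ is a finite measure, dominated convergence then yields convergence of the main term to $\int_{(0,t]}\X(\theta)\,\ud\V(\theta)=\int_0^t\X\,\ud\V$ (extending the integration interval from $(\epsilon,t]$ to $(0,t]$ costs only $K\,|\V|((0,\epsilon])\to0$). Assembling these pieces gives the claimed identity, and hence the existence of the forward integral together with its coincidence with the Lebesgue--Bochner integral.

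The step I expect to be delicate is precisely this last one, where integrand and integrator interact: the measure $\ud|\V|$ may be singular with respect to Lebesgue measure, so the mere Lebesgue differentiation theorem used in the martingale case (which only provides a.e. convergence) would be inadequate here. The way around it is to use the left-continuity built into the c\`agl\`ad assumption, which upgrades the convergence of the averages to hold at every point, hence $\ud|\V|$-a.e. Everything else --- the Fubini exchange and the boundary estimates --- is routine once boundedness of $\X$ and atomlessness of $\ud|\V|$ are in hand.
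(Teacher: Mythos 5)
Your proof is correct and is essentially the argument the paper intends: the paper's own proof of Proposition \ref{ItoBV} is a single line deferring to the proof of Theorem \ref{th:integral-forrward=ito-martingale}, i.e.\ precisely the Fubini-exchange-plus-averaging scheme that you carry out pathwise (ordinary Bochner Fubini in place of the stochastic one, boundary terms killed by the finiteness and continuity of the variation measure). Your one substantive adaptation --- using the left-continuity of $\X$ to get convergence of the averages $\frac{1}{\epsilon}\int_{\theta-\epsilon}^{\theta}\X(r)\,\ud r \to \X(\theta)$ at \emph{every} point $\theta$, since the Lebesgue differentiation theorem used in the martingale case only yields $\ud r$-a.e.\ convergence, which is inadequate against a possibly Lebesgue-singular measure $\ud|\V|$ --- is exactly the modification hidden behind the word ``similar'' in the paper, and you identify and handle it correctly.
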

\begin{proof} 
The proof is similar to the one of Theorem \ref{th:integral-forrward=ito-martingale}.
\end{proof}

As for the case of finite dimensional integrators, forward integrals also extend
Young type integrals. Let $\Y: [0,T] \times \Omega \rightarrow U$ be a $\gamma$-H\"older continuous
process and $\X$ be an $\shl(U;H)$-valued $\alpha$-H\"older continuous with $\alpha + \gamma > 1$.
Then, the so called {\it Young integral} $\int_0^t \X \ud^{y} \Y$ is well-defined, see Proposition \cite{gubinelli_lejay_tindel},
similarly as in the one-dimensional case. Moreover the $H$-valued integral process has 
also $\gamma$-H\"older continuous paths.

\medskip
An example of process $Y$ can arise as follows. Let $(e_n)$ ne an orthonormal basis in $U$,
$(\alpha_n)$ be a sequence of real numbers such that $\sum_n \alpha_n^2 < \infty$.
Let $\beta^n$ be sequence of real valued fractional Brownian motions of Hurst index
$H > \gamma$. It is not difficult to show that the $U$-valued random function
$\Y_t = \sum_{n=1} \alpha_n \beta_n e_n$ is well-defined.  In particular
for $0 \le t $, and $\ell \in \N$ we get
$$ E(\vert Y(t) - \Y(s) \vert^{2\ell}) \le \sum_{n=1} \alpha_n^2 (t-s)^{2\ell H}.$$
By Kolmogorov-Centsov theorem, taking $\ell$ large enough, it is possible to show 
that $\Y$ has has a $\gamma$-H\"older continuous version.

\medskip
\begin{Proposition} \label{P25} Let $\alpha, \gamma, \Y, \X$ as above.
Then the forward integral $\int_0^\cdot \X \ud^- \Y$ (resp. 
$\int_0^\cdot \X \ud^+ \Y$) 
 exists
and equals   $\int_0^\cdot \X \ud^y \Y$.
\end{Proposition}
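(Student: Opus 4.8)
The plan is to exploit the already-known existence of the Young integral together with its defining local estimate, reducing the computation of the regularizing (forward) integral to an elementary averaging argument on the Young integral process itself. Throughout I would argue pathwise: since $\X$ and $\Y$ are a.s.\ $\alpha$- and $\gamma$-H\"older continuous with $\alpha+\gamma>1$, all the estimates below hold for a.e.\ $\omega$, so that the convergence in probability demanded by Definition \ref{def:forward-integral} follows immediately (indeed uniformly in $t$, which simultaneously produces the required continuous version).

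First I would set $I(t):=\int_0^t \X\ud^y\Y$, which exists and is $\gamma$-H\"older continuous by \cite{gubinelli_lejay_tindel}, and record the Young--L\"ove estimate in its operator-valued form
\begin{equation*}
\left| I(t)-I(s)-\X(s)\bigl(\Y(t)-\Y(s)\bigr) \right|_H \le C\,\|\X\|_\alpha\,\|\Y\|_\gamma\,|t-s|^{\alpha+\gamma},\qquad 0\le s\le t\le T,
\end{equation*}
with $\|\X\|_\alpha,\|\Y\|_\gamma$ the relevant H\"older seminorms. This holds verbatim in the Banach setting, since the sewing argument underlying Young integration is insensitive to the value space and $|\X(s)(\Y(t)-\Y(s))|_H\le \|\X(s)\|_{\mathcal{L}(U;H)}\,|\Y(t)-\Y(s)|_U$. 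By Notation \ref{Not00} both $I$ and $\Y$ are extended as constants past $T$, so that increments over $[T,r+\epsilon]$ vanish and the displayed estimate continues to hold for every $r\in[0,t]$ read at the points $r$ and $r+\epsilon$.

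Next I would insert this into the regularizing sum. Writing $A_\epsilon(t):=\frac1\epsilon\int_0^t \X(r)\bigl(\Y(r+\epsilon)-\Y(r)\bigr)\ud r$ and replacing $\X(r)(\Y(r+\epsilon)-\Y(r))$ by $I(r+\epsilon)-I(r)$ up to an error bounded by $C\epsilon^{\alpha+\gamma}$, I obtain
\begin{equation*}
A_\epsilon(t)=\frac1\epsilon\int_0^t \bigl(I(r+\epsilon)-I(r)\bigr)\ud r + E_\epsilon(t),\qquad \sup_{t\in[0,T]}|E_\epsilon(t)|_H\le C\,T\,\epsilon^{\alpha+\gamma-1}\xrightarrow[\epsilon\to0]{}0,
\end{equation*}
the decay of $E_\epsilon$ being precisely where $\alpha+\gamma>1$ is used. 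A change of variables turns the main term into $\frac1\epsilon\int_t^{t+\epsilon}I(s)\ud s-\frac1\epsilon\int_0^\epsilon I(s)\ud s$, which converges to $I(t)-I(0)=I(t)$ by continuity of $I$, and uniformly in $t$ by uniform continuity of $I$ on the compact $[0,T]$. Hence $A_\epsilon\to I$ uniformly, yielding both the limit in probability and the continuous version, i.e.\ $\int_0^\cdot\X\ud^-\Y=\int_0^\cdot \X\ud^y\Y$. For the backward integral I would run the identical argument on $[r-\epsilon,r]$ using the right-endpoint form $|I(r)-I(r-\epsilon)-\X(r)(\Y(r)-\Y(r-\epsilon))|_H\le C\epsilon^{\alpha+\gamma}$, which follows from the stated estimate after absorbing $(\X(r-\epsilon)-\X(r))(\Y(r)-\Y(r-\epsilon))=O(\epsilon^{\alpha+\gamma})$.

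The only genuinely delicate points are bookkeeping rather than conceptual: one must ensure the Young--L\"ove constant is uniform, so that $E_\epsilon$ is controlled uniformly in $t$ as needed for the continuous version, and one must treat the boundary near $T$ consistently with Notation \ref{Not00}. Both are settled by the remark that increments past $T$ vanish, so I expect no real obstruction; the heart of the matter is simply that the condition $\alpha+\gamma>1$ renders the pointwise Young--L\"ove error summable against the $1/\epsilon$ weight.
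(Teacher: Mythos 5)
Your proof is correct and follows essentially the same route as the paper: the paper's proof simply defers to the one-dimensional argument of Section 2.2 of \cite{RussoVallois07} "by making use of Bochner integrals", and that argument is precisely your reduction via the Young--L\"ove estimate $\left| I(r+\epsilon)-I(r)-\X(r)\bigl(\Y(r+\epsilon)-\Y(r)\bigr)\right|_H \le C\epsilon^{\alpha+\gamma}$ followed by the averaging identity $\frac1\epsilon\int_0^t\bigl(I(r+\epsilon)-I(r)\bigr)\ud r \to I(t)$. You have merely written out in full, and pathwise, what the paper leaves implicit, including the boundary handling via Notation \ref{Not00} and the backward case.
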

\begin{proof} The proof is very similar to the one  in Section 2.2 of \cite{RussoVallois07}
concerning the one-dimensional case, by making use of Bochner integrals.
\end{proof}

A very natural research line is to study possible extensions of the results of this section to study the equivalence of the forward integral and 
the stochastic integral in UMD spaces introduced in \cite{vanNeervenVeraarWeis07}.
Similarly extensions of the results of Section \ref{sec:SPDEs} could include the study of the applicability of concepts introduced in Section \ref{SecTensor} to convolution  type processes in Banach space and, notably, to mild solution of stochastic evolution equations in UMD spaces  as developed e.g. in \cite{vanNeervenVeraarWeis08}. Some relevant results in that direction have been done by \cite{pronk}. 
Relevant applications to optimal control in the  framework of UMD spaces
were  obtained e.g. in \cite{BrzezniakSerrano13}.

\section{$\chi$-quadratic variation and $\chi$-Dirichlet processes}
\label{SecTensor}

\subsection{$\chi$-quadratic variation processes}
\label{sub4.2}

We denote by  $\mathcal{C}([0,T])$ the space of the real continuous processes equipped with the ucp (uniform convergence in probability) topology. Consider two real separable Banach spaces $B_1$ and $B_2$ with the same notations as in Appendix \ref{sec:preliminaries}. 

Following \cite{DiGirolamiRusso09, DiGirolamiRusso11} a {\it Chi-subspace} (of $(B_1\hat\otimes_\pi B_2)^*$) is defined as any Banach subspace $(\chi, |\cdot|_\chi)$ which is continuously embedded into $(B_1\hat\otimes_\pi B_2)^*$: in other words, there is some constant $C$ such that $|\cdot|_{(B_1\hat\otimes_\pi B_2)^*} \leq C |\cdot|_\chi$.

Concrete examples of Chi-subspaces are provided e.g. in \cite{DiGirolamiRusso11}, see Example 3.4. For example, for a fixed positive number $\tau$, in the case $B_1= B_2 = C([-\tau,0])$, the space of finite signed Borel measures on $[-\tau,0]^2$, $\mathcal{M}([-\tau, 0]^2)$ (equipped with the total variation norm) is shown to be a 
Chi-subspace and it is used, together with some specific subspaces, to prove
 It\^o-type formulas and Fukushima-type decompositions. Another concrete example will be used in Section \ref{sec:SPDEs} where, given
$B_1=B_2=H$ for some separable Hilbert $H$ space and denoted by $A$ the generator of a $C_0$-semigroup 
on $H$, we use the space $\bar\chi:= D(A^*)\hat\otimes_\pi D(A^*)$, see 
\eqref{eq:def-chi}. Among the several possible examples (see e.g. \cite{DaPratoZabczyk96}) we have for instance, in the case $H= 
L^2(\mathcal{O})$ for some bounded regular domain $\mathcal{O}\subseteq \mathbb{R}^m$, the heat semigroup with Dirichlet condition, in this case we have and $D(A^*)=D(A) = H^1_0(\mathcal{O}) \cap H^2(\mathcal{O})$.

Let $\chi$ be a generic Chi-subspace. We introduce the following definition.
\begin{Definition}
\label{def:covariation} 
Given $\X$  (respectively $\Y$)  a  $B_1$-valued (respectively $B_2$-valued) process, we say that $(\X, \Y)$ admits a $\chi$-covariation if the two following conditions are satisfied.
\begin{description}
\item[H1] For any sequence of positive real numbers $\epsilon_{n}\searrow 0$ 
there exists a subsequence $\epsilon_{n_{k}}$ such that 
\begin{equation} \label{FDefC}
\begin{split}
&\sup_{k}\int_{0}^{T} \frac{\left | (J\left(\X({r+\epsilon_{n_{k}}})-\X({r}))
\otimes(\Y({r+\epsilon_{n_{k}}})-\Y({r}))\right)\right |_{\chi^{\ast}} }{\epsilon_{n_{k}}} dr
\;< \infty\; a.s., 
\end{split}
\end{equation}
where $ J: B_{1}\hat{\otimes}_{\pi}B_{2} \longrightarrow (B_{1}\hat{\otimes}_{\pi}B_{2})^{\ast\ast}$ is the canonical injection between a space and its bidual.

\item[H2]
If we denote by $[\X,\Y]_\chi^{\epsilon}$ the application
\begin{equation}
\label{eq:def-chi-epsilon}
\left \{
\begin{array}{l}
[\X,\Y]_\chi^{\epsilon}:\chi\longrightarrow \mathcal{C}([0,T])\\[5pt]
\displaystyle
\phi \mapsto
\int_{0}^{\cdot} \tensor[_{\chi}]{\left\langle \phi,
\frac{J\left(  \left(\X({r+\epsilon})-\X({r})\right)\otimes \left(\Y({r+\epsilon})-\Y({r})\right)  \right)}{\epsilon} 
\right\rangle}{_{\chi^{\ast}}} dr, 
\end{array}
\right .
\end{equation}
the following two properties hold.
\begin{description}
\item{(i)} There exists an application, denoted by $[\X,\Y]_\chi$, defined on $\chi$ with values in $\mathcal{C}([0,T])$, 
satisfying
\begin{equation}
[\X,\Y]_\chi^{\epsilon}(\phi)\xrightarrow[\epsilon\longrightarrow 0_{+}]{ucp} [\X,\Y]_\chi(\phi), 
\end{equation} 
for every $\phi \in \chi\subset
(B_{1}\hat{\otimes}_{\pi}B_{2})^{\ast}$.
\item{(ii)} 
There exists a strongly measurable process
 $\widetilde{[\X,\Y]}_\chi:\Omega\times [0,T]\longrightarrow \chi^{\ast}$, 
such that
\begin{itemize}
\item for almost all 
$\omega \in \Omega$, $\widetilde{[\X,\Y]}_\chi(\omega,\cdot)$ is a (c\`adl\`ag) bounded variation process, 
\item 
$\widetilde{[\X,\Y]}_\chi(\cdot,t)(\phi)=[\X,\Y]_\chi(\phi)(\cdot,t)$ a.s. for all $\phi\in \chi$, $t\in [0,T]$.
\end{itemize}
\end{description}
\end{description}
\end{Definition}

\begin{Lemma} \label{RDGR}
In the setting of Definition \ref{def:covariation}, we set
\begin{equation} \label{Aepsilon}
A(\varepsilon) := \int_{0}^{T} \frac{\left | (J\left(\X({r+\epsilon})-\X({r}))
\otimes(\Y({r+\epsilon})-\Y({r}))\right)\right |_{\chi^{\ast}} }{\epsilon} dr.
\end{equation}
\begin{itemize}
\item[(i)]
If $\lim_{\epsilon \rightarrow 0} A(\epsilon) $ exists in probability
then Condition {\bf H1} of Definition \ref{def:covariation} is verified.
\item[(ii)] If $\lim_{\epsilon \rightarrow 0} A(\epsilon) = 0$ in probability
then $(\X, \Y)$  admits a $\chi$-covariation and 
$\widetilde{[\X,\Y]}$ vanishes. 
\end{itemize}
\end{Lemma}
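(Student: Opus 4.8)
The plan is to derive both items directly from elementary properties of convergence in probability, combined with the duality inequality for the pairing between $\chi$ and $\chi^*$. For item (i), I would fix an arbitrary sequence $\epsilon_n \searrow 0$. Since $A(\epsilon) \to A$ in probability as $\epsilon \to 0^+$ for some a.s.\ finite nonnegative random variable $A$, in particular $A(\epsilon_n) \to A$ in probability; hence there is a subsequence $\epsilon_{n_k}$ along which $A(\epsilon_{n_k}) \to A$ almost surely. An almost surely convergent sequence of finite random variables is almost surely bounded, so $\sup_k A(\epsilon_{n_k}) < \infty$ a.s. As $A(\epsilon_{n_k})$ is precisely the integral appearing in \eqref{FDefC}, this is exactly condition \textbf{H1} for the subsequence $\epsilon_{n_k}$.

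For item (ii), \textbf{H1} follows from item (i), the limit $0$ certainly existing. To verify \textbf{H2} I would exploit the continuity of the duality pairing: for every $\phi \in \chi$ and every $t \in [0,T]$,
\[
\left| [\X,\Y]_\chi^{\epsilon}(\phi)(t) \right| \le \int_0^t \left| \tensor[_{\chi}]{\left\langle \phi, \frac{J\left((\X(r+\epsilon)-\X(r)) \otimes (\Y(r+\epsilon)-\Y(r))\right)}{\epsilon} \right\rangle}{_{\chi^{\ast}}} \right| dr \le |\phi|_\chi\, A(\epsilon).
\]
Since the right-hand side is independent of $t$, taking the supremum over $t \in [0,T]$ yields $\sup_{t \in [0,T]} | [\X,\Y]_\chi^{\epsilon}(\phi)(t) | \le |\phi|_\chi A(\epsilon) \to 0$ in probability, that is, $[\X,\Y]_\chi^{\epsilon}(\phi) \to 0$ ucp. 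Thus \textbf{H2}(i) holds with the limit application $[\X,\Y]_\chi(\phi) \equiv 0$ for every $\phi \in \chi$.

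It then remains to produce the process of \textbf{H2}(ii), and here I would simply take $\widetilde{[\X,\Y]}_\chi$ to be the null process from $\Omega \times [0,T]$ to $\chi^*$: it is strongly measurable, its paths are trivially c\`adl\`ag of bounded variation, and $\widetilde{[\X,\Y]}_\chi(\cdot,t)(\phi) = 0 = [\X,\Y]_\chi(\phi)(\cdot,t)$ a.s.\ for all $\phi \in \chi$ and $t \in [0,T]$. Consequently $(\X,\Y)$ admits a $\chi$-covariation and $\widetilde{[\X,\Y]}$ vanishes. I do not anticipate any genuine obstacle: the two points deserving a little care are the subsequence extraction in (i) and checking that the domination in (ii) is uniform in $t$, so that the convergence obtained is genuinely ucp rather than merely pointwise in $t$.
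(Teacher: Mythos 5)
Your proof is correct. The paper itself does not write out an argument: it disposes of the lemma in one line by citing Remark 3.10 and Lemma 3.18 of \cite{DiGirolamiRusso11}, which are precisely the two statements you reconstructed. What you do differently is give a self-contained, elementary proof: for (i), the standard extraction of an a.s.\ convergent (hence a.s.\ bounded) subsequence from a sequence converging in probability, which yields exactly condition \textbf{H1}; for (ii), the duality bound $\sup_{t\in[0,T]}\left|[\X,\Y]^\epsilon_\chi(\phi)(t)\right|\le |\phi|_\chi\, A(\epsilon)$ --- valid uniformly in $t$ because the integrand in \eqref{Aepsilon} is nonnegative, so the integral over $[0,t]$ is dominated by the integral over $[0,T]$ --- which upgrades pointwise convergence to ucp convergence toward the zero application, followed by the observation that the null $\chi^*$-valued process fulfills \textbf{H2}(ii) trivially. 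Your route buys self-containedness and makes transparent that the lemma rests only on the duality inequality and elementary facts about convergence in probability; the paper's citation buys brevity and consistency with \cite{DiGirolamiRusso11}, where these facts are established once in greater generality and reused. The two points you flagged as delicate (the subsequence extraction in (i), and the uniformity in $t$ of the domination in (ii)) are indeed the only places requiring care, and you handled both correctly.
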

\begin{proof}
It is an easy consequence of Remark 3.10 and Lemma 3.18 in \cite{DiGirolamiRusso11}.
\end{proof}

If $(\X,\Y)$ admits a $\chi$-covariation we call $\chi$-covariation of $(\X,\Y)$ the $\chi^{\ast}$-valued process  $\widetilde{[\X,\Y]}_\chi$ defined for every $\omega\in \Omega $ and $t\in [0,T]$ by 
$\phi \mapsto\widetilde{[\X,\Y]}_\chi(\omega,t)(\phi)=[\X,\Y]_\chi(\phi)(\omega,t)$. 
By abuse of notation, $[\X,\Y]_\chi$ will also be often called  $\chi$-covariation and it will be confused with $\widetilde{[\X,\Y]}_\chi$. We say that a process $\X$ admits a $\chi$-quadratic variation if $(\X, \X)$ admits a $\chi$-covariation. The process $\widetilde{[\X,\X]}_\chi$, often denoted by $\widetilde{[\X]}_\chi$, is also called $\chi$-quadratic variation of $\X$.

\begin{Definition}
\label{def:global-cov}
If $\chi = (B_1\hat\otimes_\pi B_2)^*$ the $\chi$-covariation is called 
\emph{global covariation}. In this case we  omit the index $(B_1\hat\otimes_\pi B_2)^*$ using the notations $[\X, \Y]$ and $\widetilde{[\X, \Y]}$.
\end{Definition}

\begin{Lemma}
\label{lm:RChiGlob}
Let $\X$ and $\Y$ as in Definition \ref{def:covariation}. The properties below hold.
\begin{itemize}
\item[(i)] If $(\X,\Y)$ admits a global covariation then it admits a $\chi$-covariation for any Chi-subspace $\chi$. Moreover $[\X, \Y]_{\chi}(\phi)= [\X, \Y](\phi)$ for all $\phi\in \chi$.
\item[(ii)] Suppose that  $\X$ and $\Y$ admit a scalar quadratic variation (Definitions \ref{def:per-nota}) and $(\X,\Y)$ has a tensor covariation (Definition \ref{def:tensorcovariation}),
 denoted   by  
$[\X, \Y]^{\otimes}$. 
Then $(\X,\Y)$  admits a global covariation $[\X, \Y]$.  In particular, recalling that $B_1\hat\otimes_\pi B_2$ is embedded in $(B_1\hat\otimes_\pi B_2)^{**}$, we have $\widetilde{[\X, \Y]} = [\X, \Y]^{\otimes}$.
\end{itemize}
\end{Lemma}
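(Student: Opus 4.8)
The plan is to verify, in each of the two cases, the axioms \textbf{H1} and \textbf{H2} of Definition \ref{def:covariation}, reducing everything to two facts about the projective tensor product recalled in Appendix \ref{sec:preliminaries}: the canonical injection $J\colon B_1\hat\otimes_\pi B_2\to(B_1\hat\otimes_\pi B_2)^{**}$ is an isometry, and $|a\otimes b|_{B_1\hat\otimes_\pi B_2}=|a|_{B_1}|b|_{B_2}$ on elementary tensors. Throughout I write $\chi_0:=(B_1\hat\otimes_\pi B_2)^*$, so that the global covariation is exactly the $\chi_0$-covariation, and $A_\chi(\epsilon)$, $A_{\chi_0}(\epsilon)$ for the quantity \eqref{Aepsilon} computed in the norm of $\chi^*$, respectively $\chi_0^*=(B_1\hat\otimes_\pi B_2)^{**}$.

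For (i), let $\iota\colon\chi\hookrightarrow\chi_0$ be the continuous embedding, so $|\cdot|_{\chi_0}\le C|\cdot|_\chi$; its adjoint $\iota^*\colon\chi_0^*\to\chi^*$ (restriction of functionals) is then bounded with $|\iota^*(\psi)|_{\chi^*}\le C|\psi|_{\chi_0^*}$. Applying this to the tensor increments $J((\X(r+\epsilon)-\X(r))\otimes(\Y(r+\epsilon)-\Y(r)))\in\chi_0^*$ gives $A_\chi(\epsilon)\le C\,A_{\chi_0}(\epsilon)$; hence the subsequence supplied by \textbf{H1} for the global covariation also keeps $A_\chi$ a.s.\ bounded, which is \textbf{H1} for $\chi$. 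Because the pairings of $\chi$ and $\chi_0$ agree on $\chi\subset\chi_0$, one has $[\X,\Y]_\chi^\epsilon(\phi)=[\X,\Y]_{\chi_0}^\epsilon(\phi)$ for every $\phi\in\chi$; passing to the ucp limit yields \textbf{H2}(i) and the identity $[\X,\Y]_\chi(\phi)=[\X,\Y](\phi)$. Finally I would set $\widetilde{[\X,\Y]}_\chi:=\iota^*\circ\widetilde{[\X,\Y]}$, which is strongly measurable and of bounded variation (with total variation multiplied by at most $C$) since $\iota^*$ is bounded linear, and which satisfies the pairing requirement of \textbf{H2}(ii).

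For (ii), here $\chi_0^*=(B_1\hat\otimes_\pi B_2)^{**}$, and by the two tensor facts the integrand of $A_{\chi_0}(\epsilon)$ is $\tfrac1\epsilon|\X(r+\epsilon)-\X(r)|_{B_1}\,|\Y(r+\epsilon)-\Y(r)|_{B_2}$. Cauchy--Schwarz then gives
\[
A_{\chi_0}(\epsilon)\le\big([\X,\X]^{\epsilon,\mathbb{R}}(T)\big)^{1/2}\big([\Y,\Y]^{\epsilon,\mathbb{R}}(T)\big)^{1/2},
\]
and the right-hand side converges in probability by the scalar quadratic variation hypothesis (Definition \ref{def:per-nota}); extracting from any $\epsilon_n\searrow0$ a subsequence along which both factors converge a.s.\ makes $\sup_k A_{\chi_0}(\epsilon_{n_k})$ a.s.\ finite, which is \textbf{H1} (one could equivalently invoke Lemma \ref{RDGR}(i) once convergence of $A_{\chi_0}$ is known, but the subsequence argument is enough). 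For \textbf{H2}(i), unravelling the definition of $J$ gives $[\X,\Y]_{\chi_0}^\epsilon(\phi)(t)=\langle\phi,[\X,\Y]^{\otimes,\epsilon}(t)\rangle$ for $\phi\in\chi_0$; since $[\X,\Y]^{\otimes,\epsilon}\xrightarrow{ucp}[\X,\Y]^\otimes$ and $\phi$ is continuous, the ucp limit is $\langle\phi,[\X,\Y]^\otimes(t)\rangle$, so I would define $[\X,\Y](\phi):=\langle\phi,[\X,\Y]^\otimes(\cdot)\rangle$ and $\widetilde{[\X,\Y]}:=J\circ[\X,\Y]^\otimes$.

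The main obstacle is \textbf{H2}(ii), namely that $\widetilde{[\X,\Y]}=J\circ[\X,\Y]^\otimes$ is a bounded variation process. Since each approximation $[\X,\Y]^{\otimes,\epsilon}(t)=\int_0^t g_\epsilon\,dr$ is absolutely continuous and $J$ is isometric, its total variation on $[0,T]$ equals $\int_0^T|g_\epsilon|_{B_1\hat\otimes_\pi B_2}\,dr=A_{\chi_0}(\epsilon)$, which is a.s.\ bounded along the subsequence already selected (refining it so that the tensor covariation converges a.s.\ uniformly \emph{and} the two scalar quadratic variations converge a.s.). Lower semicontinuity of the total variation under pointwise convergence then closes the argument: for every partition $0=t_0<\cdots<t_m=T$,
\[
\textstyle\sum_i\big|[\X,\Y]^\otimes(t_i)-[\X,\Y]^\otimes(t_{i-1})\big|=\lim_k\sum_i\big|[\X,\Y]^{\otimes,\epsilon_{n_k}}(t_i)-[\X,\Y]^{\otimes,\epsilon_{n_k}}(t_{i-1})\big|\le\sup_k A_{\chi_0}(\epsilon_{n_k})<\infty\ \text{a.s.},
\]
so taking the supremum over partitions bounds the total variation of the (continuous, hence c\`adl\`ag) limit. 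This gives the required $\chi_0^*$-valued bounded variation process and, via the isometric embedding $J$, the announced identity $\widetilde{[\X,\Y]}=[\X,\Y]^\otimes$.
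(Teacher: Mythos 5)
Your proof is correct and takes essentially the same route as the paper: part (i) is what the paper dismisses as following ``by the definitions'' (your restriction/adjoint argument makes that precise), and for part (ii) the paper simply cites Proposition 3.14 of \cite{DiGirolamiRusso11}, noting only that condition \textbf{H1} holds by Cauchy--Schwarz, which is exactly your key estimate $A_{\chi_0}(\epsilon)\le\big([\X,\X]^{\epsilon,\mathbb{R}}(T)\big)^{1/2}\big([\Y,\Y]^{\epsilon,\mathbb{R}}(T)\big)^{1/2}$. The remaining details you supply (identifying the global covariation with $J\circ[\X,\Y]^{\otimes}$ and the lower-semicontinuity argument for bounded variation) are precisely the content the paper delegates to that cited proposition, so nothing diverges in substance.
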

\begin{proof}
Part (i) follows by the definitions. For part (ii) the proof is a slight adaptation of the one of Proposition 3.14 in \cite{DiGirolamiRusso11}. In particular condition {\bf H1} holds using Cauchy-Schwarz inequality.
\end{proof}

The product of a real finite quadratic variation process 
and a zero real quadratic variation process is again a zero
quadratic variation processes. Under some conditions this
can be generalized to the infinite dimensional case as shown in the following proposition.
\begin{Proposition} \label{PFQVZ}
Let $i = 1,2$ and $\nu_i$ be a Banach space continuously
embedded in the dual $B_i^*$ of a real separable Banach space $B_i$.
Let consider the Chi-subspace of the type 
$\chi_1 = \nu_1 \hat\otimes_\pi B_2^* $ and   
$\chi_2 = B_1^* \hat\otimes_\pi \nu_2 $,
$\hat \chi_i =  \nu_i  \hat\otimes_\pi \nu_i$, $ i =1,2$. 
Let $\X$ (respectively $\Y$) be a process with values in $B_1$ (respectively $B_2$).
\begin{itemize} 
\item[(i)] Suppose that $\X$ admits a $\hat \chi_1$-quadratic variation 
and $\Y$  a zero scalar quadratic variation.
Then $[\X,\Y]_{\chi_1} = 0$. 
\item[(ii)] Similarly suppose that  $\Y$ admits a $\hat \chi_2$-quadratic variation 
and $\X$  a zero  scalar quadratic variation.
Then $[\X,\Y]_{\chi_2} = 0$. 
\end{itemize}
\end{Proposition}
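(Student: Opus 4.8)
The plan is to prove only part (i), since part (ii) is entirely symmetric by exchanging the roles of $\X$ and $\Y$ (and of $B_1,B_2$, $\nu_1,\nu_2$). The target is to show that the $\chi_1$-covariation $[\X,\Y]_{\chi_1}$ vanishes, where $\chi_1 = \nu_1 \hat\otimes_\pi B_2^*$. By Lemma \ref{RDGR}(ii), it suffices to verify that the quantity $A(\epsilon)$ of \eqref{Aepsilon}, computed with $\chi = \chi_1$, converges to $0$ in probability as $\epsilon \to 0$; this simultaneously gives existence of the $\chi_1$-covariation and forces it to be zero.

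First I would identify the dual norm on $\chi_1^* = (\nu_1 \hat\otimes_\pi B_2^*)^*$. The key estimate exploits the elementary bound for the duality between a projective tensor product and its dual: for an elementary tensor $u \otimes v$ with $u \in B_1$ and $v \in B_2$, one has $|J(u\otimes v)|_{\chi_1^*} \le |u|_{\nu_1^*}\, |v|_{(B_2^*)^*} = |u|_{\nu_1^*}\,|v|_{B_2}$, using the canonical embedding $B_2 \hookrightarrow B_2^{**}$. Applying this with $u = \X(r+\epsilon)-\X(r)$ and $v = \Y(r+\epsilon)-\Y(r)$, the integrand in $A(\epsilon)$ is dominated by
\[
\frac{|\X(r+\epsilon)-\X(r)|_{\nu_1^*}\;|\Y(r+\epsilon)-\Y(r)|_{B_2}}{\epsilon}.
\]
Then by the Cauchy--Schwarz inequality in $L^2([0,T], dr)$,
\[
A(\epsilon) \le \left(\int_0^T \frac{|\X(r+\epsilon)-\X(r)|^2_{\nu_1^*}}{\epsilon}\,dr\right)^{1/2}\left(\int_0^T \frac{|\Y(r+\epsilon)-\Y(r)|^2_{B_2}}{\epsilon}\,dr\right)^{1/2}.
\]

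Now I would interpret the two factors. The second factor is precisely $\big([\Y,\Y]^{\epsilon,\R}(T)\big)^{1/2}$ from Definition \ref{def:per-nota}, which converges to $0$ in probability since $\Y$ has zero scalar quadratic variation. For the first factor, the hypothesis that $\X$ admits a $\hat\chi_1$-quadratic variation, with $\hat\chi_1 = \nu_1 \hat\otimes_\pi \nu_1$, controls exactly the increments of $\X$ measured in the $\nu_1^*$ norm: the diagonal elementary-tensor bound $|J((\X(r+\epsilon)-\X(r))^{\otimes 2})|_{\hat\chi_1^*}$ relates to $|\X(r+\epsilon)-\X(r)|^2_{\nu_1^*}$, so that condition \textbf{H1} for the $\hat\chi_1$-quadratic variation guarantees, along a suitable subsequence $\epsilon_{n_k}$, that $\int_0^T \frac{|\X(r+\epsilon_{n_k})-\X(r)|^2_{\nu_1^*}}{\epsilon_{n_k}}\,dr$ is a.s.\ bounded. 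Combining the a.s.\ boundedness of the first factor along the subsequence with the convergence to zero (in probability) of the second factor yields $A(\epsilon_{n_k}) \to 0$ in probability. Since this can be extracted from every sequence $\epsilon_n \searrow 0$, a standard subsequence argument gives $A(\epsilon)\to 0$ in probability along the full net, and Lemma \ref{RDGR}(ii) concludes.

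The main obstacle I anticipate is the first step: justifying cleanly that the inequality $|J(u\otimes v)|_{\chi_1^*}\le |u|_{\nu_1^*}|v|_{B_2}$ holds, and that $|J(w^{\otimes 2})|_{\hat\chi_1^*}$ genuinely dominates $|w|_{\nu_1^*}^2$ for $w \in B_1$, since these require care about how the various norms on $\nu_1 \hat\otimes_\pi (\cdot)$, on its dual, and on the bidual interact through the canonical injection $J$; this is exactly the kind of tensor-norm bookkeeping collected in Appendix \ref{sec:preliminaries}, and I would lean on those facts rather than re-deriving them. The remaining steps---Cauchy--Schwarz, identification of the scalar-quadratic-variation factor, and the subsequence extraction---are routine once the tensor-norm inequalities are in place.
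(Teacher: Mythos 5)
Your proposal is correct and follows essentially the same route as the paper's own proof: reduction to part (i) by symmetry, invoking Lemma \ref{RDGR}(ii), the elementary-tensor estimate $|J(u\otimes v)|_{\chi_1^*}\le |u|_{\nu_1^*}|v|_{B_2}$ (the paper in fact proves equality, via \eqref{eq:step-da-richiamare}, but only the inequality is needed), Cauchy--Schwarz, and then identifying the two factors with condition \textbf{H1} for the $\hat\chi_1$-quadratic variation of $\X$ and the vanishing scalar quadratic variation of $\Y$. Your explicit subsequence argument at the end is a welcome elaboration of a step the paper leaves implicit, but it is not a different method.
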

\begin{proof}
We remark that Lemma \ref{lm:era52} implies that
$\chi_i$ and $\hat \chi_i$, $i = 1,2$ are indeed Chi-subspaces.
 By Lemma \ref{RDGR}(ii),
it is enough to show that $A(\varepsilon)$ defined in 
\eqref{Aepsilon} converge to zero, with $\chi = \chi_i, i = 1,2$.
By symmetry it is enough to show (i).

The Banach space $B_i$ it isometrically embedded in its bidual $B_i^{**},
i=1,2,$ so, since $\nu_1\subseteq B_1^*$ with continuous inclusion, we have $B_1 \subseteq B_1^{**} \subset \nu_1^*$ where the inclusion are continuous.

Moreover, since ${\chi_1} = \nu_1 \hat \otimes_\pi B_2^* \subseteq 
B_1^* \otimes_\pi B_2^* \subset (B_1 \hat \otimes_\pi B_2)^*$,
 with  continuous inclusions, taking into account Lemma \ref{lm:era52},
 we have $J(B_1 \hat \otimes_\pi B_2) \subset  
(B_1 \hat \otimes_\pi B_2)^{**} \subset {\chi_1}^*.$ 
Let $a \in B_1$ and  $b \in B_2$.
We have
\begin{multline}
\label{eq:step-da-richiamare}
\vert J(a \otimes b) \vert_{{\chi_1}^*} =  \sup_{\vert \varphi \vert_{\nu_1} \le 1, 
\vert \psi \vert_{B_2^*}  \le 1}  
\vert {}_{\chi_1^*} \langle J(a \otimes b), \varphi \otimes \psi 
\rangle_{\chi_1}\vert  \\
= \sup_{\vert \varphi \vert_{\nu_1} \le 1}
\vert {}_{\nu_1} \langle \varphi, a \rangle_{\nu_1^*} \vert
 \sup_{\vert \psi \vert_{B_2^{*}} \le 1}
\vert {}_{B_2^*} \langle \psi, b \rangle_{B_2^{**}} \vert 
= \vert a \vert_{\nu_1^*}\vert b \vert_{B_2^{**}} = \vert a \vert_{\nu_1^*}\vert b \vert_{B_2}.
\end{multline}

Consequently, 
with $a =  \X(r+\varepsilon) - \X(r)$ and $b =\Y(r+\varepsilon) - \Y(r)$
for $r \in [0,T]$, 
 we have 
\begin{multline}
 A(\varepsilon) =
 \int_{0}^{T} \frac{\left | (J\left(\X({r+\epsilon})-\X({r}))
\otimes(\Y({r+\epsilon})-\Y({r}))\right)\right |_{{\chi_1}^{\ast}} }{\epsilon} dr
=  \int_0^T \vert \X(r+\varepsilon) - \X(r) \vert_{\nu_1^*}
\vert \Y(r+\varepsilon) - \Y(r) \vert_{B_2} \frac{dr}{\varepsilon}\\ 
\le \left(\int_0^T \vert \X(r+\varepsilon) - \X(r) \vert_{\nu_1^*}^2 
\frac{dr}{\varepsilon}
\int_0^T  \vert \Y(r+\varepsilon) - \Y(r) \vert_{B_2}^2 
\frac{dr}{\varepsilon} \right)^{1/2} \\
= \left( \int_{0}^{T} \frac{\left | (J\left(\X({r+\epsilon})-\X({r}))
\otimes(\X({r+\epsilon})-\X({r}))\right)\right |_{\hat \chi_1^{\ast}} }{\epsilon} dr
\right)^{1/2}  \left( \int_{0}^{T} \frac{\left |\Y({r+\epsilon})-\Y({r}))
\right |^2_{B_2}}{\epsilon} dr
\right)^{1/2}.
\end{multline}
The last equality is obtained using an argument similar to (\ref{eq:step-da-richiamare}). The condition {\bf H1} related to the $\hat \chi_1$-quadratic variation of $\X$ and the zero scalar quadratic variation of  $\Y$, imply that previous expression converges to zero.
\end{proof}

\subsection{Tensor covariation and
classical tensor covariation}

The notions of tensor covariation recalled in Definition \ref{def:tensorcovariation}, denoted  by  $[\cdot, \cdot ]^{\otimes}$, concerns Banach space-valued processes. In the specific case when $H_1$ and $H_2$ are two separable Hilbert spaces and $\M \colon [0,T]\times\Omega \to H_1$, $\N \colon [0,T]\times\Omega \to H_2$ are two continuous local martingales, another (classical) notion of tensor covariation is defined, see for instance in Section 23.1 of \cite{Metivier82}. This is denoted by $[\M, \N ]^{cl}$ and (see Chapters 22 and 23 of \cite{Metivier82}) it is an $(H_1\hat\otimes_\pi H_2)$-valued process. Recall that $(H_1\hat\otimes_\pi H_2) \subseteq (H_1\hat\otimes_\pi H_2)^{**}$.

\begin{Remark}
\label{rm:propertyP}
We observe the following facts.
\begin{itemize}
\item[(i)] Taking into account Lemma \ref{lm:che-era-remark} we know that, given $h \in H_1$ and $k \in H_2$,  $h^* \otimes k^*$ can be considered as an element of $(H_1 \hat\otimes_\pi H_2)^*$. One has
\begin{equation}
\label{eq:propertyP}
[\M, \N ]^{cl}(t)(h^*\otimes k^*) = [ \left\langle \M,h \right\rangle, \left\langle \N,k \right\rangle ](t),
\end{equation}
where $h^*$ (respectively $k^*$) is associated with $h$ (respectively $k$) via Riesz theorem.  This property characterizes $[\M, \N ]^{cl}$, see e.g. \cite{DaPratoZabczyk92}, Section 3.4 after Proposition 3.11.
\item[(iii)] If $H_2=\mathbb{R}$ and $\mathbb{N}=N$ is a real continuous local martingale then, identifying $H_1\hat\otimes_\pi H_2$ with $H_1$, $[\M, \N ]^{cl}$ can be considered as  an  $H_1$-valued process. The characterization (\ref{eq:propertyP}) can be translated into
\begin{equation}
\label{eq:propertyP2}
[\M, \N ]^{cl}(t)(h^*) =
[ \left\langle \M,h \right\rangle, N ](t), \forall h \in H_1.
\end{equation}
By inspection, this allows us to see that
the classical covariation between $\M$ and $N$ can be expressed as
\begin{equation}
\label{eq:defNM}
[\M,N]^{cl}(t):= \M(t) N(t) -  \M(0) N(0) -
\int_0^t N(r) \ud \M(r) - \int_0^t \M(r) \ud N(r).
\end{equation}
\end{itemize}
\end{Remark}

In the sequel $H$ will denote a real separable Hilbert space.

\begin{Lemma}  
\label{lm:after-definition-chi-D}
Any continuous local martingale with values in $H$ has a scalar quadratic variation and a tensor quadratic variation. If $\M_1$ and $\M_2$ are continuous local martingales with values respectively in $H_1$ and $H_2$ then $(\M_1,\M_2)$ admits a tensor covariation.
\end{Lemma}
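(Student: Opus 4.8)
The plan is to reduce both assertions to the scalar quadratic variation and to the real-valued covariation theory, avoiding Banach-valued stochastic integrals altogether.

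For the first statement I would compute the scalar quadratic variation of a continuous local martingale $\M$ by an ``integration by parts'' at the level of the increments. Starting from $|\M(r+\epsilon)-\M(r)|_H^2 = |\M(r+\epsilon)|_H^2 - |\M(r)|_H^2 - 2\langle \M(r), \M(r+\epsilon)-\M(r)\rangle$, dividing by $\epsilon$ and integrating on $[0,t]$, the contribution of $|\M(r+\epsilon)|^2-|\M(r)|^2$ converges to $|\M(t)|_H^2 - |\M(0)|_H^2$ by the elementary telescoping identity $\frac1\epsilon\int_0^t(g(r+\epsilon)-g(r))\ud r\to g(t)-g(0)$ valid for continuous $g$; the remaining term is the $\epsilon$-approximation of the forward integral of the continuous (hence c\`agl\`ad predictable) $\mathcal{L}(H,\mathbb{R})$-valued process $r\mapsto\langle\M(r),\cdot\rangle$, which trivially satisfies \eqref{EChainRule}, so by Theorem \ref{th:integral-forrward=ito-martingale} it converges in probability to $2\int_0^t\langle\M(r),\ud\M(r)\rangle$. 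Hence the limit exists and equals $|\M(t)|^2-|\M(0)|^2-2\int_0^t\langle\M,\ud\M\rangle$, which by the classical It\^o formula for the square of the norm of a Hilbert-valued martingale (equivalently, by uniqueness of the Doob--Meyer decomposition recalled before \eqref{EChainRule}) is precisely $[\M]^{\mathbb{R},cl}(t)$; being continuous, this is the scalar quadratic variation.

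For the tensor statements it suffices to prove that a single continuous local martingale $\M$ in $H$ has a tensor quadratic variation: applying this to $\M=(\M_1,\M_2)$ valued in $H_1\oplus H_2$ and projecting $(H_1\oplus H_2)\hat\otimes_\pi(H_1\oplus H_2)$ onto the $H_1\hat\otimes_\pi H_2$ block (a contraction for the projective norms, by the universal property of $\hat\otimes_\pi$) yields the tensor covariation of $(\M_1,\M_2)$, whose limit is the corresponding block $[\M_1,\M_2]^{cl}$. Now the approximating process $T^\epsilon(t):=[\M,\M]^{\otimes,\epsilon}(t)$ takes values in the positive cone of $H\hat\otimes_\pi H\cong$ trace-class operators on $H$, is nondecreasing in $t$ for the PSD order, and satisfies $\operatorname{tr}T^\epsilon(t)=[\M,\M]^{\epsilon,\mathbb{R}}(t)$ since $|a\otimes a|_\pi=|a|_H^2$. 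I would then combine three facts: (a) by the first part together with the equivalence of convergence in probability and ucp convergence for the scalar quadratic variation (Lemma 2.1 of \cite{RussoVallois07}), $\operatorname{tr}T^\epsilon \to [\M]^{\mathbb{R},cl}$ ucp; (b) testing against any $h^*\otimes k^*$ gives $T^\epsilon(t)(h^*\otimes k^*)=\int_0^t\epsilon^{-1}\langle\M(r+\epsilon)-\M(r),h\rangle\langle\M(r+\epsilon)-\M(r),k\rangle\ud r$, which converges to the real covariation $[\langle\M,h\rangle,\langle\M,k\rangle](t)=[\M,\M]^{cl}(t)(h^*\otimes k^*)$ by \eqref{eq:propertyP}, so $T^\epsilon(t)\to[\M,\M]^{cl}(t)$ in the weak operator topology, in probability; (c) for positive trace-class operators, weak-operator convergence together with convergence of the traces forces trace-norm (i.e. projective-norm) convergence.

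Facts (a)--(c) give $\|T^\epsilon(t)-[\M,\M]^{cl}(t)\|_{H\hat\otimes_\pi H}\to0$ in probability for each fixed $t$, and I would upgrade this to ucp using that, for $s\le t$, the PSD monotonicity yields $\|T^\epsilon(t)-T^\epsilon(s)\|_\pi=\operatorname{tr}T^\epsilon(t)-\operatorname{tr}T^\epsilon(s)$; thus the trace-norm modulus of continuity of $T^\epsilon$ is dominated by that of its trace, which converges ucp to the continuous limit $[\M]^{\mathbb{R},cl}$, so a Dini/partition argument promotes pointwise-in-probability to uniform convergence. The main obstacle is precisely fact (c), a non-commutative Scheff\'e/Gr\"umm-type convergence lemma for positive trace-class operators, together with this final passage to ucp; both are carried out pathwise along subsequences, the uniform control throughout being supplied by the identity $|a\otimes b|_\pi=|a|_{H}\,|b|_{H}$ which ties the projective norm back to the scalar quadratic variation established in the first step.
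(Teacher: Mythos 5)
Your proof is correct, but it follows a genuinely different route from the paper, whose entire proof of this lemma is a citation of Propositions 1.6 and 1.7 of \cite{DiGirolamiRusso11}; those results rest in turn on M\'etivier's calculus for Hilbert-valued martingales (tensor-valued stochastic integrals and the product rule for $\M_1\otimes\M_2$). You instead assemble a self-contained argument from the paper's own toolkit plus operator theory. The scalar part (telescoping identity for $|\M|^2$, Theorem \ref{th:integral-forrward=ito-martingale} for the cross term, Doob--Meyer uniqueness to identify the limit with $[\M]^{\mathbb{R},cl}$) is sound, with one small repair: Definition \ref{def:forward-integral} only guarantees convergence for almost every $t$, while Definition \ref{def:per-nota} requires every $t$; this is fixed either by monotonicity in $t$ of $[\M,\M]^{\epsilon,\mathbb{R}}$ together with continuity of the limit, or by noting that the proof of Theorem \ref{th:integral-forrward=ito-martingale} actually yields ucp convergence. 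The direct-sum-and-projection reduction of the covariation statement is valid, since $(x,y)\mapsto (P_1x)\otimes(P_2y)$, with $P_i$ the canonical projections of $H_1\oplus H_2$ onto $H_i$, is bounded bilinear and hence induces a $\pi$-norm contraction by the universal property. The heart of your argument --- the trace identity $\mathrm{Tr}\,T^\epsilon(t)=[\M,\M]^{\epsilon,\mathbb{R}}(t)$, the weak-operator identification of the limit via \eqref{eq:propertyP} together with Proposition 2.4(3) of \cite{RussoVallois93-Oslo}, and then trace-norm convergence of positive operators from weak convergence plus convergence of traces --- is a correct non-commutative Scheff\'e scheme, carried out pathwise along subsequences as you indicate (the a.s. trace bounds plus a countable dense set of pairs $(h,k)$ give weak-operator convergence), and the ucp upgrade via positivity and monotonicity of increments works, continuity of $[\M,\M]^{cl}$ in $\pi$-norm being a free by-product of your own identities. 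What the paper's citation buys is economy and consistency with the classical bracket machinery used throughout Section 3; what your route buys is that the only Banach-valued stochastic object needed is $[\M,\M]^{cl}$ itself, with no tensor-valued stochastic integration, at the price of one genuinely external ingredient --- the Gr\"umm/dell'Antonio-type lemma for positive trace-class operators (weak operator convergence plus convergence of traces implies trace-norm convergence; see e.g. Simon, \emph{Trace Ideals}, Theorem 2.20) --- which is true but nontrivial and must be cited or proved, as must the existence of the classical bracket $[\M,\M]^{cl}$ (Section 23 of \cite{Metivier82}), which you, like the paper, take as given.
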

\begin{proof}
See Proposition 1.7 and Proposition 1.6 in \cite{DiGirolamiRusso11}.
\end{proof}

\begin{Lemma}
\label{lm:L411}
Let $\M$ (respectively $\N$) be a continuous local martingale with values in $H$.
Then $(\M,\N)$ admits a tensor covariation and 
\begin{equation}
\label{eq:eq413}
[\M, \N]^{\otimes}= [\M, \N]^{cl}. 
\end{equation}
In particular $(\M,\N)$ admits a global covariation 
and 
\begin{equation}
\label{eq:eq414}
\widetilde{[\M, \N]}= [\M, \N]^{cl}. 
\end{equation}
\end{Lemma}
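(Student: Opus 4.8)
The plan is to prove the identity $[\M,\N]^\otimes = [\M,\N]^{cl}$ by testing both sides against the full collection of elementary tensors $h^*\otimes k^*$, which by polarization and density is enough to identify two $(H\hat\otimes_\pi H)^*$-acting bounded variation processes. First I would invoke Lemma \ref{lm:after-definition-chi-D}, which guarantees that $(\M,\N)$ admits a tensor covariation $[\M,\N]^\otimes$; this disposes of the existence question immediately, so the remaining work is purely the identification \eqref{eq:eq413}. Once existence is in hand, the convergence in Definition \ref{def:tensorcovariation} is ucp in the projective tensor norm, hence for any fixed $\phi\in(H\hat\otimes_\pi H)^*$ the scalar process $t\mapsto [\M,\N]^{\otimes,\epsilon}(t)(\phi)$ converges ucp to $[\M,\N]^\otimes(t)(\phi)$.

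Next I would specialize the test element to $\phi = h^*\otimes k^*$ for $h,k\in H$, which is legitimate as an element of $(H\hat\otimes_\pi H)^*$ by Lemma \ref{lm:che-era-remark} (as recalled in Remark \ref{rm:propertyP}). Pairing the approximating $(H\hat\otimes_\pi H)$-valued process against $h^*\otimes k^*$ and using the duality relation for elementary tensors turns the integrand into the purely real expression
\[
\frac{\langle \M(r+\epsilon)-\M(r),h\rangle\,\langle \N(r+\epsilon)-\N(r),k\rangle}{\epsilon},
\]
so that $[\M,\N]^{\otimes,\epsilon}(t)(h^*\otimes k^*)$ is exactly the $\epsilon$-approximation of the real covariation $[\langle\M,h\rangle,\langle\N,k\rangle](t)$. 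Since $\langle\M,h\rangle$ and $\langle\N,k\rangle$ are real continuous local martingales, their real covariation in the sense of \eqref{DefRealIntCov} coincides with the classical one, and passing to the limit gives $[\M,\N]^\otimes(t)(h^*\otimes k^*)=[\langle\M,h\rangle,\langle\N,k\rangle](t)$. Comparing with the characterizing property \eqref{eq:propertyP} of $[\M,\N]^{cl}$, both processes agree on every $h^*\otimes k^*$.

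To conclude \eqref{eq:eq413} I would argue that agreement on all elementary tensors $h^*\otimes k^*$ forces equality as $(H\hat\otimes_\pi H)$-valued (equivalently $(H\hat\otimes_\pi H)^*$-acting) processes: the linear span of such tensors is dense in $H\hat\otimes_\pi H$, and since both $[\M,\N]^\otimes$ and $[\M,\N]^{cl}$ are $(H\hat\otimes_\pi H)^{**}$-valued (indeed $(H\hat\otimes_\pi H)$-valued, via the canonical embedding) continuous bounded variation processes, agreement on a dense set of test functionals identifies them. Finally, \eqref{eq:eq414} follows from \eqref{eq:eq413} together with Lemma \ref{lm:RChiGlob}(ii): having both a scalar quadratic variation (Lemma \ref{lm:after-definition-chi-D}) and a tensor covariation, $(\M,\N)$ admits a global covariation with $\widetilde{[\M,\N]}=[\M,\N]^\otimes$, which we have just identified with $[\M,\N]^{cl}$. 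The main obstacle I expect is the density/identification step: one must be careful that equality of two $\chi^*$-valued bounded variation processes can indeed be tested on a dense subspace of $\chi$, which here is unproblematic because the continuity of the embedding and of the paths lets the pairing extend continuously, but it is the point where rigor is genuinely required rather than routine computation.
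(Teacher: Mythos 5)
Your argument follows the paper's proof essentially step for step: existence of the tensor covariation from Lemma \ref{lm:after-definition-chi-D}, the global covariation and \eqref{eq:eq414} from Lemma \ref{lm:RChiGlob}(ii), reduction to elementary tensors $h^*\otimes k^*$ via Lemma \ref{lm:che-era-remark} and Remark \ref{rm:propertyP}(i), commutation of the duality pairing with the Bochner integral, and identification of the regularized covariation of the real local martingales $\langle \M,h\rangle$, $\langle \N,k\rangle$ with the classical one (the paper cites Proposition 2.4(3) of Russo--Vallois for this). All of that is correct and matches the paper.

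The one genuine weakness is the justification of your final identification step. The span of the functionals $h^*\otimes k^*$ is norm-dense in $H^*\hat\otimes_\pi H^*$, but $H^*\hat\otimes_\pi H^*$ is \emph{not} norm-dense in $(H\hat\otimes_\pi H)^*\cong {\mathcal Bi}(H,H)$: under the identification of $(H\hat\otimes_\pi H)^*$ with bounded operators from $H$ to $H^*$, the norm closure of the span of elementary tensors consists of compact operators only, so for instance the bounded bilinear form $(x,y)\mapsto\langle x,y\rangle$ is not approximable in this norm. Consequently ``the continuity of the embedding and of the paths lets the pairing extend continuously'' is not a valid way to pass from elementary tensors to a general $\phi\in(H\hat\otimes_\pi H)^*$. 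What rescues the argument is the fact you mention only in passing: both $[\M,\N]^{\otimes}(t)$ and $[\M,\N]^{cl}(t)$ lie in the canonical image of the predual $H\hat\otimes_\pi H$ inside the bidual. Granting that, you can conclude in either of two ways: (a) as the paper does, invoke Lemma \ref{lemmaTensor}, i.e. $H^*\hat\otimes_\pi H^*$ is weak-* \emph{sequentially} dense in $(H\hat\otimes_\pi H)^*$; weak-* convergence $\phi_n\to\phi$ means precisely $\phi_n(u)\to\phi(u)$ for each fixed $u$ in the predual, so agreement on elementary tensors propagates to every $\phi$; or (b) more elementarily, use that $u\mapsto T_u$ is an injective isometry of $H\hat\otimes_\pi H$ onto the nuclear operators, so $\langle u-v, h^*\otimes k^*\rangle=0$ for all $h,k$ forces $T_{u-v}=0$, hence $u=v$. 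Your proof as written contains neither; supplying one of them (the paper's choice is (a), which is exactly the role of its appendix Lemma \ref{lemmaTensor}) is what genuinely closes the argument.
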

\begin{proof} 
Thanks to Lemma \ref{lm:after-definition-chi-D},  $\M$ and $\N$ admit a  scalar quadratic variation and $(\M, \N)$ a tensor covariation. 
By Lemma \ref{lm:RChiGlob}(ii) they admit a global covariation. It is enough to show that they are equal as elements of $(H_1\hat\otimes_\pi H_2)^{**}$,
 so one needs to prove that
\begin{equation}
\label{eq:eq414bis}
[\M, \N]^{\otimes}(\phi)= [\M, \N]^{cl}(\phi),
\end{equation}
for every $\phi\in (H_1 \hat\otimes_\pi H_2)^*$. 

Given $h\in H_1$ and $k\in H_2$, we consider (via Lemma \ref{lm:che-era-remark}) $h^* \otimes k^*$ as an element of $(H_1\hat\otimes_\pi H_2)^*$.
According to Lemma \ref{lemmaTensor},
$H_1^*\hat\otimes_\pi H_2^*$ is sequentially dense in $(H_1\hat\otimes_\pi H_2)^*$
 in the weak-* topology.
 Therefore, taking into account item (i) of Remark \ref{rm:propertyP}
 we only need to show that
\begin{equation}
\label{eq:eq415}
[\M, \N]^{\otimes}(h^*\otimes k^*)= [\langle \M,h \rangle, \langle \N, 
k \rangle],
\end{equation}
for every $h \in H_1, k \in H_2$.
By the usual properties of Bochner integral the left-hand side of (\ref{eq:eq415}) is the limit of 
\begin{multline}
\frac{1}{\epsilon} \int_0^\cdot (M({r+\epsilon}) - M(r)) \otimes (N({r+\epsilon}) - N(r)) (h^*\otimes k^*) \ud r\\
= \frac{1}{\epsilon} \int_0^\cdot \langle (M({r+\epsilon}) - M(r)),h \rangle\langle (N({r+\epsilon}) - N(r)),k \rangle \ud r.
\end{multline}
Since $\left\langle \M, h \right\rangle$ and $\left\langle \N, k \right\rangle$ are real local martingales, the covariation $[\langle \M,h \rangle, \langle \N, k \rangle]$ exists and equals the classical covariation of local martingales because of Proposition 2.4(3) of \cite{RussoVallois93-Oslo}.
\end{proof}

\begin{Lemma}
\label{lm:Prop3}
Let $\M\colon [0,T] \times \Omega \to H$ be a continuous local martingale and $\Z$ a  measurable process from $([0,T]\times \Omega, \mathcal{P})$ to $H^*$ and such that 
$\int_0^T \| \Z(r) \|^2 \ud [\M]^{\mathbb{R}, cl}(r) < +\infty$ $a.s$. 
We define
\begin{equation}
\label{eq:defX-perlemma}
X(t) := \int_0^t \left\langle \Z(r) , \ud \M(r)\right\rangle.
\end{equation}
Then $X$ is a real continuous local martingale and, for every continuous real local martingale $N$, the (classical, one-dimensional) covariation process $[X,N]$ is given by
\begin{equation}
\label{eq:[XN]-perlemma}
[X,N](t) = \int_0^t \left\langle \Z(r) , \ud [\M,N]^{cl}(r)\right\rangle;
\end{equation}
in particular the integral in the right-side is well-defined.
\end{Lemma}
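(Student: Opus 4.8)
The first assertion is essentially a restatement of the construction recalled in Section~\ref{SecStochInt}. Identifying $H^*$ with $\mathcal{L}(H;\R)$ through the duality pairing, the process $\Z$ is a predictable $\mathcal{L}(H;\R)$-valued integrand, and the standing hypothesis $\int_0^T \|\Z(r)\|^2 \ud[\M]^{\mathbb{R},cl}(r) < +\infty$ a.s.\ is exactly condition (\ref{EChainRule}) with target space $\R$. Hence $\Z$ belongs to the class $\mathcal{J}^2(0,T;H,\R)$ and $X = \int_0^\cdot \langle \Z(r),\ud\M(r)\rangle$ is a well-defined real continuous local martingale. This settles the first part, and it remains to prove the covariation identity (\ref{eq:[XN]-perlemma}).

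For the covariation formula the plan is to proceed by approximation, reducing first to elementary integrands and then passing to the limit. I would begin by localizing: replacing $\M$ and $N$ by stopped processes and introducing a sequence of stopping times that also forces $\mathbb{E}\int_0^T \|\Z\|^2 \ud[\M]^{\mathbb{R},cl} < \infty$ and $\mathbb{E}[N](T)<\infty$, one may assume all the relevant quantities square integrable. Since both members of (\ref{eq:[XN]-perlemma}) commute with stopping, it suffices to establish the identity in this bounded regime.

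The base case is that of a constant integrand $\Z(r)\equiv h^*$ with $h\in H$. Then $X(t) = \langle \M(t)-\M(0),h\rangle$, so that $[X,N] = [\langle\M,h\rangle,N]$, because covariation is insensitive to an $\mathscr{F}_0$-measurable shift. By the characterization (\ref{eq:propertyP2}) this equals $[\M,N]^{cl}(\cdot)(h^*) = \langle h^*, [\M,N]^{cl}(\cdot)\rangle$, which coincides with $\int_0^\cdot \langle h^*, \ud[\M,N]^{cl}(r)\rangle$ since $h^*$ is constant and $[\M,N]^{cl}(0)=0$. Using bilinearity of the covariation and additivity over consecutive time intervals, the identity then extends to elementary predictable processes of the form $\Z=\sum_i h_i^*\,\mathbf 1_{(t_i,t_{i+1}]}$.

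Finally I would approximate a general $\Z$ by a sequence $(\Z_n)$ of such elementary processes converging in the norm $\|\cdot\|_{\mathcal{I}_\M(0,T;\R)}$, i.e.\ $\mathbb{E}\int_0^T \|\Z_n-\Z\|^2 \ud[\M]^{\mathbb{R},cl}\to 0$, so that $X_n:=\int_0^\cdot\langle\Z_n,\ud\M\rangle\to X$ in $\mathcal{M}^2(0,T;\R)$. The key tool for passing to the limit on both sides is the Kunita--Watanabe inequality. On the left, $\sup_t|[X_n,N](t)-[X,N](t)| \le ([X_n-X](T))^{1/2}([N](T))^{1/2}$, which tends to $0$ in $L^1$ since $\mathbb{E}[X_n-X](T)=\mathbb{E}|X_n(T)-X(T)|^2\to 0$. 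On the right I would invoke a vectorial version of the Kunita--Watanabe inequality, obtained by decomposing $[\M,N]^{cl}$ along an orthonormal basis $(e_k)$ of $H$, whose components are the scalar covariations $[\langle\M,e_k\rangle,N]$; this yields both the well-definedness of $\int_0^\cdot\langle\Z,\ud[\M,N]^{cl}\rangle$ and the bound $\int_0^T|\langle\Z_n-\Z,\ud[\M,N]^{cl}\rangle| \le (\int_0^T\|\Z_n-\Z\|^2\ud[\M]^{\mathbb{R},cl})^{1/2}([N](T))^{1/2}$, which again vanishes. Both sides of (\ref{eq:[XN]-perlemma}) thus converge to the respective limits, giving the identity for $\Z$. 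The \textbf{main obstacle} is precisely this last ingredient: establishing the vectorial Kunita--Watanabe estimate that simultaneously guarantees the right-hand integral is well-defined and controls the approximation error by the $\mathcal{I}_\M$-norm.
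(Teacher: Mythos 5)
Your proposal is correct in outline and shares the paper's skeleton: the local martingale property of $X$ via the extension of the It\^o integral to integrands satisfying \eqref{EChainRule} (Theorem 2.14 of \cite{KrylovRozovskii07}), localization to the square integrable regime, and the characterization \eqref{eq:propertyP2} as the key identity. Where you genuinely diverge is the core of the covariation formula: the paper disposes of it in one line by citing the discussion in \cite{Meyer77}, page 456, while you reconstruct it by hand through constant integrands, elementary predictable integrands, and a passage to the limit in the $\mathcal{I}_\M$-norm controlled by Kunita--Watanabe estimates on both sides. Your route is self-contained and makes explicit precisely the analytic content that the paper's citation hides; the paper's route is shorter but opaque to a reader without access to Meyer's argument.

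Two points in your plan need repair before it closes. First, for density in $\mathcal{I}_\M(0,T;\R)$ the elementary integrands must carry random coefficients, i.e. be of the form $\sum_i \mathbf{1}_{A_i}\, h_i^*\, \mathbf{1}_{(t_i,t_{i+1}]}$ with $A_i \in \mathscr{F}_{t_i}$; deterministic steps $\sum_i h_i^*\mathbf{1}_{(t_i,t_{i+1}]}$ do not span a dense subset of the predictable $L^2$ space. The base-case computation survives unchanged, since $[\mathbf{1}_A(L^{t_{i+1}}-L^{t_i}),N]=\mathbf{1}_A([L,N]^{t_{i+1}}-[L,N]^{t_i})$ for $A\in\mathscr{F}_{t_i}$. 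Second, and more seriously, the ``vectorial Kunita--Watanabe'' inequality you single out as the main obstacle is true, but the hint you give (decompose $[\M,N]^{cl}$ along an orthonormal basis and use scalar Kunita--Watanabe componentwise) does not produce it: the componentwise bounds carry square roots and cannot be summed. A derivation that works: for an elementary $\Z$ as above, $\int_0^T\langle \Z,\ud[\M,N]^{cl}\rangle$ is a sum of increments of the real covariations $[\langle\M,h_i\rangle,N]$ over the intervals $(t_i,t_{i+1}]$; apply scalar Kunita--Watanabe on each interval, then use the measure inequality $\ud[\langle\M,h\rangle]\le |h|^2\,\ud[\M]^{\R,cl}$ --- which follows from uniqueness of the Doob--Meyer decomposition, because writing $\M=\langle\M,h\rangle h/|h|^2+\M^{\perp}$ gives $|\M|^2=\langle\M,h\rangle^2/|h|^2+|\M^\perp|^2$ and hence $[\M]^{\R,cl}=[\langle\M,h\rangle]/|h|^2+[\M^\perp]^{\R,cl}$ --- and finally Cauchy--Schwarz over the partition. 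This yields the inequality for elementary integrands, and a monotone class argument extends it to every predictable $\Z$ with $\int_0^T\|\Z\|^2\ud[\M]^{\R,cl}<\infty$, giving at once the well-definedness of the right-hand side of \eqref{eq:[XN]-perlemma} and the error control you need; alternatively it can be extracted from the Hilbert-space martingale theory in \cite{Metivier82}. With these two repairs your argument is complete and constitutes a legitimate, self-contained substitute for the paper's citation-based proof.
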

\begin{proof}
The fact that $X$ is a local martingale is part of the result of Theorem 2.14 in \cite{KrylovRozovskii07}. For the other claim we can reduce, using a sequence of suitable stopping times as in the proof of Theorem \ref{th:integral-forrward=ito-martingale}, to the case
 in which $\Z$, $\M$ and $N$ are square integrable martingales. Taking into account the characterization (\ref{eq:propertyP2}) and the discussion developed in  \cite{Meyer77}, page 456, (\ref{eq:[XN]-perlemma}) follows.
\end{proof}

When one of the processes is real the formalism of global covariation can be simplified as shown in the following proposition.
\begin{Proposition}
\label{pr:1}
Let be $\X\colon [0,T] \times \Omega \to H$ a Bochner integrable process and $Y\colon [0,T] \times \Omega \to \mathbb{R}$ a real valued process. Suppose 
the following.
\begin{itemize}
\item[(a)] For any $\epsilon$,
$\frac{1}{\epsilon} \int_0^T |\X(r+\epsilon) - \X(r)|
 |Y(r+\epsilon) - Y(r)| \ud r$ is bounded by a r.v. $A(\epsilon)$
such that  $A(\epsilon)$ converges in probability when $\epsilon \to 0$.
\item[(b)] For every $h\in H$ the  limit 
\[
C(t)(h):= \lim_{\epsilon \to 0^+} \frac{1}{\epsilon} \int_0^t \left \langle h, \X(r+ \epsilon) - \X(r) \right\rangle (Y(r+ \epsilon) - Y(r)) \ud r
\]
exists ucp and there exists a continuous process $\tilde C\colon [0,T] \times \Omega \to H$ such that $\left\langle \tilde C(t,\omega), h \right\rangle = C(t)(h)(\omega)$ for $\mathbb{P}$-a.s. $\omega\in\Omega$, for all $t\in [0,T]$ and $h\in H$.
\end{itemize}
Then $\X$ and $Y$ admit a global covariation and, after identifying $H$ with $(H \hat\otimes_{\pi}\mathbb{R})^*$, $\tilde C= \widetilde{[\X,Y]}$.
\end{Proposition}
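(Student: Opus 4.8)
The global covariation being, by Definition \ref{def:global-cov}, the $\chi$-covariation of Definition \ref{def:covariation} for $\chi=(H\hat\otimes_\pi\mathbb{R})^*$, the plan is to verify conditions \textbf{H1} and \textbf{H2} for this $\chi$. First I would fix the relevant identifications. Since tensoring by $\mathbb{R}$ is trivial, $H\hat\otimes_\pi\mathbb{R}$ is isometrically isomorphic to $H$, the elementary tensor $a\otimes\mu$ (with $a\in H$, $\mu\in\mathbb{R}$) corresponding to $\mu a$; hence $\chi=(H\hat\otimes_\pi\mathbb{R})^*\cong H^*$, which I identify with $H$ through Riesz, while $\chi^*=(H\hat\otimes_\pi\mathbb{R})^{**}\cong H$ by reflexivity. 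Under these identifications one has $|J(a\otimes\mu)|_{\chi^*}=|\mu|\,|a|_H$, and any $\phi\in\chi$ with Riesz representative $h_\phi\in H$ acts by ${}_{\chi}\langle\phi,J(a\otimes\mu)\rangle_{\chi^*}=\mu\,\langle h_\phi,a\rangle_H$.

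To check \textbf{H1}, observe that, by the norm computation above, the quantity $A(\varepsilon)$ of \eqref{Aepsilon} (taken with $\Y=Y$) equals $\frac{1}{\varepsilon}\int_0^T|\X(r+\varepsilon)-\X(r)|_H\,|Y(r+\varepsilon)-Y(r)|\ud r$, which is exactly the integral dominated in hypothesis (a). Since (a) bounds it by a random variable converging in probability, and since in-probability convergence yields a.s. convergence (hence a.s. boundedness) along a subsequence, for any $\varepsilon_n\searrow 0$ one extracts $\varepsilon_{n_k}$ with $\sup_k A(\varepsilon_{n_k})<\infty$ a.s.; this is \textbf{H1} (a minor variant of Lemma \ref{RDGR}(i), applied to the dominating variable). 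For \textbf{H2}(i), let $\phi\in\chi$ with Riesz representative $h$. The pairing formula turns the definition \eqref{eq:def-chi-epsilon} into
\[
[\X,Y]_\chi^{\varepsilon}(\phi)=\frac{1}{\varepsilon}\int_0^{\cdot}\langle h,\X(r+\varepsilon)-\X(r)\rangle\,(Y(r+\varepsilon)-Y(r))\ud r,
\]
which is precisely the approximating family of hypothesis (b). By (b) it converges ucp to $C(\cdot)(h)$, so $\phi\mapsto C(\cdot)(h_\phi)$ is the required $\mathcal{C}([0,T])$-valued limit application $[\X,Y]_\chi$.

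For \textbf{H2}(ii) I would set $\widetilde{[\X,Y]}:=\tilde C$, the continuous process supplied by (b), viewed with values in $\chi^*\cong H$; it is strongly measurable (being continuous with $H$ separable) and satisfies $\langle\tilde C(\cdot,t),h\rangle=C(t)(h)=[\X,Y]_\chi(h^*)(\cdot,t)$, which is the required duality relation. The one genuinely delicate point, and the step I expect to be the main obstacle, is that $\tilde C$ has a.s. bounded variation in $\chi^*$. I would deduce this from \textbf{H1}: for a fixed partition $\{t_i\}$ and fixed unit vectors $h_i\in H$, the estimate $|\langle h_i,\cdot\rangle|\le|\cdot|_H$ gives $\sum_i\frac{1}{\varepsilon}\bigl|\int_{t_i}^{t_{i+1}}\langle h_i,\X(r+\varepsilon)-\X(r)\rangle(Y(r+\varepsilon)-Y(r))\ud r\bigr|\le A(\varepsilon)$; letting $\varepsilon\to 0$ (a finite sum of limits from (b) on the left, the dominating variable of (a) on the right) yields $\sum_i\langle\tilde C(t_{i+1})-\tilde C(t_i),h_i\rangle\le A_0$ a.s., where $A_0$ is the in-probability limit from (a).

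The subtlety is that the total variation is a supremum over uncountably many partitions and test vectors, whereas the estimate above holds a.s. only for each fixed choice. I would circumvent this by running the argument over a countable family of rational partitions and of $h_i$ taken in a countable dense subset of the unit ball, and then using the continuity of $\tilde C$ together with the separability of $H$ to realize the total variation $V(\tilde C)$ as the supremum over that countable family. Intersecting the corresponding full-measure sets gives $V(\tilde C)\le A_0<\infty$ a.s., establishing the bounded variation. This simultaneous-in-$\phi$ control is exactly what condition \textbf{H1} is designed to provide, in the spirit of Lemma 3.18 of \cite{DiGirolamiRusso11}; together with the previous steps it proves that $(\X,Y)$ admits a global covariation and that $\widetilde{[\X,Y]}=\tilde C$.
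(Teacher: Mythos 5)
Your proof is correct, but it is not the route the paper takes: the paper's entire proof consists of the identification of $H$ with $(H\hat\otimes_\pi\mathbb{R})^*$ followed by a citation of Corollary 3.26 of \cite{DiGirolamiRusso11}, which packages precisely the kind of verification you carry out by hand. You instead check conditions \textbf{H1} and \textbf{H2} of Definition \ref{def:covariation} directly: hypothesis (a) yields \textbf{H1} via the subsequence argument (a minor variant of Lemma \ref{RDGR}(i), correctly adapted to the fact that (a) only dominates the integral by a convergent random variable rather than asserting its convergence); hypothesis (b) yields \textbf{H2}(i) after the Riesz identification; and the only substantive extra work is \textbf{H2}(ii), the a.s.\ bounded variation of $\tilde C$ as a $\chi^*\cong H$-valued process, which you obtain by passing the domination by $A(\epsilon)$ to the limit along each fixed partition and fixed choice of unit test vectors, and then realizing the total variation as a supremum over a countable family (rational partition points together with $\{0,T\}$, test vectors in a countable dense subset of the unit ball) using the continuity of $\tilde C$ and the separability of $H$ -- this reduction is exactly what makes the uncountable supremum compatible with the a.s.\ pointwise bounds, and you identify it as the delicate step. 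The comparison: the paper's citation is short and defers all bookkeeping to the earlier work of Di Girolami and Russo, while your argument is self-contained, makes explicit where each hypothesis enters ((a) for \textbf{H1} and for the variation bound, (b) for \textbf{H2}(i) and the identification of the limit), and spells out the bounded-variation step that the citation hides.
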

\begin{proof}
Taking into account the identification of $H$ with $(H\hat\otimes_\pi \mathbb{R})^*$  the result is a consequence of Corollary 3.26 of \cite{DiGirolamiRusso11}. 
\end{proof}

\begin{Proposition}
\label{pr:thA}
If $\M\colon [0,T] \times \Omega \to H$ and $N\colon [0,T] \times \Omega \to \mathbb{R}$ are continuous local martingales. Then
$\M$ and $N$ admit a global covariation and $\widetilde{[\M,N]}=[\M,N]^{cl}$.
\end{Proposition}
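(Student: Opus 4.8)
The plan is to reduce Proposition \ref{pr:thA} to the mixed real/Hilbert criterion of Proposition \ref{pr:1}, applied with $\X=\M$ and $Y=N$, and to identify the resulting limit process with the classical tensor covariation. First I would verify hypothesis (a). By the Cauchy--Schwarz inequality one has
\begin{multline*}
\frac{1}{\epsilon}\int_0^T |\M(r+\epsilon)-\M(r)|\,|N(r+\epsilon)-N(r)|\ud r \\
\le \left(\frac{1}{\epsilon}\int_0^T |\M(r+\epsilon)-\M(r)|^2\ud r\right)^{1/2}\left(\frac{1}{\epsilon}\int_0^T |N(r+\epsilon)-N(r)|^2\ud r\right)^{1/2} =: A(\epsilon).
\end{multline*}
The first factor is the square root of the approximating scalar quadratic variation $[\M,\M]^{\epsilon,\mathbb{R}}(T)$ of Definition \ref{def:per-nota}, which converges in probability since $\M$ admits a scalar quadratic variation by Lemma \ref{lm:after-definition-chi-D}; the second factor converges in probability because the real continuous local martingale $N$ has a finite quadratic variation. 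Hence $A(\epsilon)$ converges in probability and (a) holds.

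Next I would verify hypothesis (b). Fix $h\in H$ and note that $\langle \M,h\rangle$ is a real continuous local martingale. Therefore
\begin{equation*}
C(t)(h) = \lim_{\epsilon\to 0^+}\frac{1}{\epsilon}\int_0^t \langle h, \M(r+\epsilon)-\M(r)\rangle\,(N(r+\epsilon)-N(r))\ud r
\end{equation*}
is precisely the covariation $[\langle \M,h\rangle, N](t)$, which exists and coincides with the classical covariation of these two real local martingales (the same fact used in the proof of Lemma \ref{lm:L411}, via Proposition 2.4(3) of \cite{RussoVallois93-Oslo}). Invoking the characterization \eqref{eq:propertyP2} of Remark \ref{rm:propertyP}(iii), and identifying $H\hat\otimes_\pi\mathbb{R}$ with $H$, this equals $\langle [\M,N]^{cl}(t), h\rangle$. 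The natural candidate for the continuous process in (b) is thus $\tilde C=[\M,N]^{cl}$, which is genuinely $H$-valued and continuous, the classical tensor covariation of continuous local martingales being a continuous bounded-variation process. Proposition \ref{pr:1} then delivers that $\M$ and $N$ admit a global covariation with $\widetilde{[\M,N]}=\tilde C=[\M,N]^{cl}$.

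The estimates involved are routine; the only delicate point is part (b), namely assembling the scalar limits $C(t)(h)$, $h\in H$, into one continuous $H$-valued process and matching it with $[\M,N]^{cl}$ through \eqref{eq:propertyP2}. I would remark, finally, that an even shorter route is available: since $\mathbb{R}$ is a separable Hilbert space and $N$ is a continuous $\mathbb{R}$-valued local martingale, Proposition \ref{pr:thA} is exactly the special case $H_2=\mathbb{R}$ of Lemma \ref{lm:L411}, so the conclusion follows at once from \eqref{eq:eq414} combined with the identification of $H\hat\otimes_\pi\mathbb{R}$ with $H$ recalled in Remark \ref{rm:propertyP}(iii).
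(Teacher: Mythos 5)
Your proof is correct and follows essentially the same route as the paper's: both verify hypotheses (a) and (b) of Proposition \ref{pr:1} with $\tilde C=[\M,N]^{cl}$, using Cauchy-Schwarz together with the scalar quadratic variations of $\M$ and $N$ for (a), and the characterization \eqref{eq:propertyP2} plus Proposition 2.4(3) of \cite{RussoVallois93-Oslo} for (b). Your closing remark---that the statement is the $H_2=\mathbb{R}$ case of Lemma \ref{lm:L411} read through the identification of Remark \ref{rm:propertyP}(iii)---is also a valid shortcut (the proof of that lemma works for distinct Hilbert spaces $H_1$, $H_2$, and $\mathbb{R}$ is a separable Hilbert space), though the paper chooses to argue independently via Proposition \ref{pr:1}.
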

\begin{proof}
We have to check the conditions stated in Proposition \ref{pr:1} 
 for $\tilde C$ equal to the right side of (\ref{eq:defNM}). Concerning (a),
 by Cauchy-Schwarz inequality we have
\[
\frac{1}{\epsilon} \int_0^T |N(r+\epsilon) - N(r)| |\M(r+\epsilon) - \M(r)| \ud r
 \leq [N,N]^{\epsilon, \mathbb{R}}(T) [\M,\M]^{\epsilon, \mathbb{R}}(T).
\]
Since both $N$ and $\M$ are local martingales they admit a scalar quadratic variation (as recalled in Lemma \ref{lm:after-definition-chi-D}), the result is established. Concerning (b), taking into account (\ref{eq:propertyP2}), we need to prove that for any $h\in H$
\begin{equation}
\label{eq:FIF}
\lim_{\epsilon \to 0} \frac{1}{\epsilon} \int_0^\cdot ( M^h(r+\epsilon) - M^h(r)) (N(r+\epsilon) - N(r)) \ud r = [\left\langle M, h \right\rangle, N]
\end{equation}
ucp, where $M^h$ is the real local martingale $\langle \M, h \rangle$. (\ref{eq:FIF}) follows by Proposition 2.4(3) of \cite{RussoVallois93-Oslo}.
\end{proof}

\subsection{$\chi$-Dirichlet and $\nu$-weak Dirichlet processes}

We have now at our disposal all the elements we need to introduce the concepts of $\chi$-Dirichlet process and $\nu$-weak Dirichlet process.
\begin{Definition}
\label{def:chi-Dirichlet-process}
Let $\chi\subseteq (H\hat\otimes_{\pi} H)^*$ be a Chi-subspace.
A continuous $H$-valued process $\X \colon ([0,T] \times \Omega,\mathcal{P}) \to H$ is called \emph{$\chi$-Dirichlet process} if there exists a decomposition $\X = \M +\A$ where
\begin{itemize}
 \item[(i)] $\M$ is a continuous local martingale,
 \item[(ii)] $\A$ is a continuous $\chi$-zero quadratic variation process with $\A(0)=0$.
\end{itemize}
\end{Definition}

\begin{Definition} \label{DNuDir}
Let $H$ and $H_1$ be two separable Hilbert spaces.
Let $\nu\subseteq (H\hat\otimes_{\pi} H_1)^*$ be a Chi-subspace. 
A continuous adapted $H$-valued process $\A \colon [0,T] \times \Omega \to H$ is said to be \emph{$\mathscr{F}_t$-$\nu$-martingale-orthogonal} if  $[ \A, \N ]_\nu=0$, for any $H_1$-valued continuous local martingale $\N$.
\end{Definition}

As we have done for the expressions ``stopping time'', ''adapted'', ``predictable''... since we  always use the 
filtration $\mathscr{F}_t$, we simply write 
\emph{$\nu$-martingale-orthogonal} instead of $\mathscr{F}_t$-$\nu$-martingale-orthogonal.

\begin{Definition}
\label{def:chi-weak-Dirichlet-process}
Let $H$ and $H_1$ be two separable Hilbert spaces.
Let $\nu\subseteq (H\hat\otimes_{\pi} H_1)^*$ be a Chi-subspace. 
A continuous $H$-valued process $\X \colon [0,T] \times \Omega \to H$ is called \emph{$\nu$-weak-Dirichlet process} if it
is adapted and there exists a decomposition $\X = \M +\A$ where
\begin{itemize}
 \item[(i)] $\M$ is  an  $H$-valued continuous local martingale,
 \item[(ii)] $\A$ is an $\nu$-martingale-orthogonal process with $\A(0)=0$.
\end{itemize}
\end{Definition}
The decomposition of a real weak Dirichlet process is unique, see Remark 3.5 of \cite{GozziRusso06}.  For the infinite dimensional case we have the following result.
\begin{Proposition} \label{PDenseUnique}
Let $\nu\subseteq (H\hat\otimes_{\pi} H)^*$ be a Chi-subspace. Suppose that
 $\nu$ is dense in $(H\hat\otimes_\pi H)^*$. Then any decomposition of
 a $\nu$-weak-Dirichlet process $\X$ is unique.
\end{Proposition}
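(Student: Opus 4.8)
The plan is to prove uniqueness by reducing it to the real-valued case via testing against elements of $\nu$, together with the characterizing property of the decomposition, namely that the orthogonal part kills covariations with martingales. Suppose $\X$ admits two decompositions $\X = \M_1 + \A_1 = \M_2 + \A_2$, where each $\M_i$ is a continuous $H$-valued local martingale and each $\A_i$ is $\nu$-martingale-orthogonal with $\A_i(0)=0$. Setting $\M := \M_1 - \M_2$ and $\A := \A_2 - \A_1$, we have $\M = \A$; moreover $\M$ is a continuous local martingale vanishing at $0$ (since $\M(0) = \X(0) - \A_1(0) - (\X(0) - \A_2(0)) = 0$), and $\A$ is $\nu$-martingale-orthogonal as the difference of two such processes (the $\nu$-covariation is bilinear in the second slot, so $[\A, \N]_\nu = [\A_2,\N]_\nu - [\A_1,\N]_\nu = 0$ for every $H$-valued continuous local martingale $\N$). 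The goal is to conclude that $\M \equiv 0$, which forces $\M_1 = \M_2$ and hence $\A_1 = \A_2$.

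The key idea is that $\M$, being simultaneously a local martingale and a $\nu$-martingale-orthogonal process, must have trivial $\nu$-quadratic variation against itself. First I would observe that since $\M = \A$ is $\nu$-martingale-orthogonal, we may take $\N = \M$ (which is itself an $H$-valued continuous local martingale) in the defining relation, obtaining $[\M, \M]_\nu = [\A, \M]_\nu = 0$. On the other hand, by Lemma \ref{lm:L411}, the continuous local martingale $\M$ admits a global covariation with itself equal to its classical tensor quadratic variation $[\M,\M]^{cl}$, and by Lemma \ref{lm:RChiGlob}(i) this global covariation restricts to the $\nu$-covariation: $[\M,\M]_\nu(\phi) = [\M,\M](\phi) = [\M,\M]^{cl}(\phi)$ for every $\phi \in \nu$. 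Combining these gives $[\M,\M]^{cl}(\phi) = 0$ for all $\phi \in \nu$.

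Next I would invoke the density hypothesis. Since $\nu$ is dense in $(H\hat\otimes_\pi H)^*$ and $\widetilde{[\M,\M]}^{cl}$ is an $(H\hat\otimes_\pi H)$-valued (hence $(H\hat\otimes_\pi H)^{**}$-valued) bounded variation process, the vanishing of the pairing $[\M,\M]^{cl}(\phi)$ for all $\phi$ in the dense set $\nu$ forces $\widetilde{[\M,\M]}^{cl}(t) = 0$ for every $t$. To make this rigorous one fixes $t$, notes that $\widetilde{[\M,\M]}^{cl}(t)$ is a fixed element of $(H\hat\otimes_\pi H)^{**}$ annihilating the dense subspace $\nu$, and concludes it annihilates all of $(H\hat\otimes_\pi H)^*$ by continuity, whence it is the zero functional. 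In particular the classical scalar quadratic variation $[\M]^{\mathbb{R},cl}(t)$, which is the trace-type object controlling $\mathbb{E}|\M(t)|^2 - \mathbb{E}|\M(0)|^2$ via the Doob--Meyer decomposition of $|\M|^2$, vanishes identically; since $\M(0)=0$ this yields $\mathbb{E}|\M(t)|^2 = 0$ for all $t$, so $\M \equiv 0$ by continuity.

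The step I expect to be the main obstacle is the passage from ``$[\M,\M]^{cl}$ pairs to zero against the dense subspace $\nu$'' to ``the classical real quadratic variation $[\M]^{\mathbb{R},cl}$ vanishes, hence $\M$ is constant.'' The density argument only directly gives that the $(H\hat\otimes_\pi H)^{**}$-valued tensor object is zero as a functional; one must then connect this tensor quadratic variation to the scalar/real quadratic variation that actually controls the $L^2$-norm of $\M$, using the relation between $[\M,\M]^{cl}$ and $[\M]^{\mathbb{R},cl}$ (the scalar version is obtained from the tensor one by pairing against the identity/trace, or equivalently against the appropriate elements built from an orthonormal basis, as in Lemma \ref{lm:after-definition-chi-D} and Remark \ref{rm:propertyP}). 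Care is needed because the trace functional need not itself lie in $\nu$; however, since we have already concluded the tensor object vanishes as an element of $(H\hat\otimes_\pi H)^{**}$ — not merely on $\nu$ — the trace pairing is legitimately zero, and the localization by stopping times (as in the proof of Theorem \ref{th:integral-forrward=ito-martingale}) handles the reduction from local to genuine square-integrable martingales.
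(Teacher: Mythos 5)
Your proof is correct, and its overall skeleton matches the paper's: form the difference of the two decompositions, get $[\M,\M]_\nu=0$ from the orthogonality of the $\A$-part, use density of $\nu$ to upgrade this to the whole dual, identify with the classical quadratic variation, and conclude $\M\equiv 0$. Where you genuinely diverge is the density step. The paper does not exploit at that stage the fact that the global quadratic variation is $(H\hat\otimes_\pi H)$-valued; instead it considers the approximating operators $[\M,\M]^\epsilon\colon (H\hat\otimes_\pi H)^*\to\mathcal{C}([0,T])$ and invokes the Banach--Steinhaus theorem for $F$-spaces to prove that the limit operator $[\M,\M]$ is continuous, so that its vanishing on the dense subspace $\nu$ propagates to all of $(H\hat\otimes_\pi H)^*$. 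You instead use Lemma \ref{lm:L411} up front: since $\widetilde{[\M,\M]}=[\M,\M]^{cl}$ takes values in $H\hat\otimes_\pi H\hookrightarrow (H\hat\otimes_\pi H)^{**}$, the pairing $\phi\mapsto\langle\widetilde{[\M,\M]}^{cl}(t),\phi\rangle$ is automatically norm-continuous, and vanishing on a dense subspace is then elementary functional analysis --- no uniform boundedness principle needed. This is a legitimate and arguably cleaner shortcut; what the paper's route buys is robustness (it would work even without knowing a priori that the limit object lives in the predual). Your endgame also differs: you pass to the scalar quadratic variation $[\M]^{\R,cl}$ via the trace and get $\mathbb{E}|\M(t)|^2=0$ after localization, whereas the paper shows $[\langle\M,h\rangle,\langle\M,k\rangle]=0$ and hence each projection $\langle\M,h\rangle\equiv 0$; both are valid. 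One phrasing to tighten: the identity $\langle\widetilde{[\M,\M]}^{cl}(t),\phi\rangle=0$ holds a.s.\ with a null set depending on $\phi$, and $\nu$ need not be separable, so you should not fix $\omega$ and assert that the functional ``annihilates the dense subspace $\nu$''; rather, for each fixed $\phi\in(H\hat\otimes_\pi H)^*$ pick a sequence $(\phi_n)\subset\nu$ converging to $\phi$ and use the pathwise bound $|\langle\widetilde{[\M,\M]}^{cl}(t),\phi-\phi_n\rangle|\le|\widetilde{[\M,\M]}^{cl}(t)|_{H\hat\otimes_\pi H}\,|\phi-\phi_n|_{(H\hat\otimes_\pi H)^*}$, which involves only countably many null sets; your trace conclusion in any case requires only the countably many pairings against $e_n^*\otimes e_n^*$. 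The paper's operator-continuity formulation sidesteps this quantifier issue automatically.
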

\begin{proof}
Assume that $\X=\M^1+\A^1 = \M^2 + \A^2$ are two decompositions where $\M^1$ 
and $\M^2$ are continuous local martingales and $\A^1, \A^2$ are $\nu$-martingale-orthogonal processes. If we call $\M:= \M^1- \M^2$ and $\A:= \A^1-\A^2$ we have $0=\M+\A$. 

By Lemma \ref{lm:L411},  $\M$ has a global quadratic variation. 
In particular it also has a $\nu$-quadratic variation and,
 thanks to the bilinearity  of the $\nu$-covariation,
\[
0 = [\M,0]_\nu = [\M,\M+\A]_\nu = [\M,\M]_\nu +  [\M,\A]_\nu = [\M,\M]_\nu + 0 =[\M,\M]_\nu.
\]
We prove now that $\M$ has also zero global quadratic variation. 
We have denoted  by  $\mathcal{C}([0,T])$ the space of the real continuous processes defined on $[0,T]$. We introduce, for $\epsilon >0$, the operators
\begin{equation}
\label{eq:peremailaFrancesco}
\left \{
\begin{array}{l}
\displaystyle
[\M,\M]^\epsilon \colon  (H\hat\otimes_\pi H)^* \to \mathcal{C}([0,T])\\[5pt]
\displaystyle
([\M,\M]^\epsilon (\phi))(t) := \frac{1}{\epsilon} \int_0^t \tensor[_{(H\hat\otimes_\pi H)}]{\left\langle (\M_{r+\epsilon} - \M_r)^{\otimes 2}, \phi \right\rangle}{_{(H\hat\otimes_\pi H)^*}} \ud r.
\end{array}
\right .
\end{equation}
Observe the following.
\begin{itemize}
\item[(a)] $[\M,\M]^\epsilon$ are linear and bounded operators.
\item[(b)] For $\phi\in (H\hat\otimes_\pi H)^*$ the limit $[\M,\M](\phi):= \lim_{\epsilon\to 0} [\M,\M]^\epsilon (\phi)$ exists.
\item[(c)] If $\phi\in \nu$ we have $[\M,\M](\phi)=0$.
\end{itemize}
Thanks to (a), (b) and Banach-Steinhaus theorem for $F$-spaces (see Theorem 17, Chapter II in \cite{DunfordSchwartz58}) we know that $[\M,\M]$ is linear and bounded. Thanks to (c) and the fact that the inclusion 
$\nu \subseteq  (H\hat\otimes_\pi H)^*$ is dense, it follows $[\M,\M]=0$. 
By Lemma \ref{lm:L411} $[\M, \M]$ coincides with the classical quadratic variation $[\M, \M]^{cl}$ and it is characterized by
\[
0 = [\M, \M]^{cl} (h^*,k^*) = [\langle \M , h \rangle, \langle \M , k \rangle],
\]
by Remark 
\ref{rm:propertyP}(i).
Since $\M(0)=0$ and therefore $\langle \M , h \rangle(0)=0$ it follows that $\langle \M , h \rangle\equiv 0$ for any $h\in H$. Finally $\M\equiv 0$,
which concludes the proof.
\end{proof}

\begin{Proposition}
\label{pr:dirichlet-weak-dirichlet}
Let $H$ and $H_1$ be two separable Hilbert spaces.
Let  $\chi = \chi_0 \hat\otimes_\pi \chi_0$
for some $\chi_0$ Banach space continuously embedded in $H^*$.
Define $\nu = \chi_0\hat\otimes_\pi H_1^*$.
 Then an $H$-valued continuous zero $\chi$-quadratic variation process
 $\A$ is a $\nu$-martingale-orthogonal process. 
\end{Proposition}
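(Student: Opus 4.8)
The plan is to reduce the statement to the vanishing of a single scalar quantity and then to run the Cauchy--Schwarz argument already employed in the proof of Proposition \ref{PFQVZ}, but with the roles of the two factors exchanged. Fix an $H_1$-valued continuous local martingale $\N$; I must show $[\A,\N]_\nu=0$. First I would observe that the present situation is exactly the one of Proposition \ref{PFQVZ} with $B_1=H$, $B_2=H_1$ and $\nu_1=\chi_0$, so that $\chi_1=\chi_0\hat\otimes_\pi H_1^*=\nu$ and $\hat\chi_1=\chi_0\hat\otimes_\pi\chi_0=\chi$; in particular Lemma \ref{lm:era52} guarantees that both $\nu$ and $\chi$ are genuine Chi-subspaces. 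By Lemma \ref{RDGR}(ii) it is then enough to prove that
\[
A_\nu(\varepsilon):=\int_0^T\frac{\left|J\big((\A(r+\varepsilon)-\A(r))\otimes(\N(r+\varepsilon)-\N(r))\big)\right|_{\nu^*}}{\varepsilon}\,dr
\]
converges to $0$ in probability as $\varepsilon\to 0^+$.

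Next I would compute the tensor norm exactly as in \eqref{eq:step-da-richiamare}: since $\nu=\chi_0\hat\otimes_\pi H_1^*$ and $H_1$ is Hilbert (so $H_1^{**}=H_1$), for $a\in H$ and $b\in H_1$ one has $|J(a\otimes b)|_{\nu^*}=|a|_{\chi_0^*}\,|b|_{H_1}$. Applying this with $a=\A(r+\varepsilon)-\A(r)$, $b=\N(r+\varepsilon)-\N(r)$, and then Cauchy--Schwarz in $r$, I obtain
\[
A_\nu(\varepsilon)\le\left(\int_0^T|\A(r+\varepsilon)-\A(r)|_{\chi_0^*}^2\,\frac{dr}{\varepsilon}\right)^{1/2}\left(\int_0^T|\N(r+\varepsilon)-\N(r)|_{H_1}^2\,\frac{dr}{\varepsilon}\right)^{1/2}.
\]
Using the same norm identity once more, now with the diagonal tensor $a\otimes a$ and $\chi=\chi_0\hat\otimes_\pi\chi_0$, I would recognize the first factor as the square root of the scalar quantity $A_\chi(\varepsilon)=\int_0^T\frac{|J((\A(r+\varepsilon)-\A(r))^{\otimes 2})|_{\chi^*}}{\varepsilon}\,dr$ attached to the $\chi$-quadratic variation of $\A$, while the second factor is precisely $[\N,\N]^{\varepsilon,\mathbb{R}}(T)^{1/2}$.

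To conclude, I would control the two factors separately. The second one converges in probability, since $\N$ is a continuous local martingale and hence admits a scalar quadratic variation by Lemma \ref{lm:after-definition-chi-D}; in particular $[\N,\N]^{\varepsilon,\mathbb{R}}(T)$ stays bounded in probability. The first factor converges to $0$ because $\A$ has zero $\chi$-quadratic variation, which amounts to $A_\chi(\varepsilon)\to 0$ in probability for $\chi=\chi_0\hat\otimes_\pi\chi_0$. The product of a sequence tending to $0$ in probability and one bounded in probability tends to $0$ in probability, giving $A_\nu(\varepsilon)\to 0$, as required.

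The delicate point — and the place where the special product form $\chi=\chi_0\hat\otimes_\pi\chi_0$ is essential — is this last identification. With only the bracket $\widetilde{[\A]}_\chi$ being zero (plus condition \textbf{H1}), the Cauchy--Schwarz estimate would merely show that both factors stay bounded, hence that $[\A,\N]_\nu$ \emph{exists}, not that it vanishes. What makes the argument close is that, thanks to the tensor-norm factorisation $|J(a\otimes a)|_{\chi^*}=|a|_{\chi_0^*}^2$, the vanishing of the $\chi$-quadratic variation of $\A$ must be read at the level of the scalar quantity $A_\chi(\varepsilon)$ (i.e.\ of the $\chi_0^*$-valued increments in the mean-square sense), so that $A_\chi(\varepsilon)^{1/2}$ is genuinely the vanishing factor. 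Everything else is the bookkeeping already carried out in Proposition \ref{PFQVZ}, the only change being that here the vanishing factor comes from $\A$ while the bounded factor comes from the martingale $\N$.
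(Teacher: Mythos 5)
Your proof has a genuine gap, and it sits exactly at the point you yourself flag as ``delicate''. Your strategy is to invoke Lemma \ref{RDGR}(ii), which requires showing that the norm-level quantity $A_\nu(\varepsilon)$ tends to $0$ in probability; after Cauchy--Schwarz (your tensor-norm computations $|J(a\otimes b)|_{\nu^*}=|a|_{\chi_0^*}|b|_{H_1}$ and $|J(a\otimes a)|_{\chi^*}=|a|_{\chi_0^*}^2$ are correct) this forces you to show that $A_\chi(\varepsilon)=\int_0^T |\A(r+\varepsilon)-\A(r)|_{\chi_0^*}^2\,\frac{dr}{\varepsilon}\to 0$ in probability. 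But the hypothesis that $\A$ has zero $\chi$-quadratic variation does \emph{not} give this. By Definition \ref{def:covariation}, zero $\chi$-quadratic variation means: (a) the subsequence boundedness condition \textbf{H1} for $A_\chi$, and (b) $[\A,\A]_\chi^{\varepsilon}(\phi)\to 0$ ucp for each \emph{fixed} $\phi\in\chi$. Lemma \ref{RDGR}(ii) is a one-way implication: $A(\varepsilon)\to 0$ is \emph{sufficient} for a vanishing bracket. Its converse, which is what you assert when you write that the vanishing of the bracket ``must be read at the level of the scalar quantity $A_\chi(\varepsilon)$'', is not a consequence of the definition and is false in general: (b) is a pointwise-in-$\phi$ (weak-type) statement, and in infinite dimensions it does not upgrade to convergence of the supremum-norm quantity $A_\chi(\varepsilon)$. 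Schematically, if the normalized increments behave like an orthonormal sequence $(e_n)$ of $\chi_0^*$ along $\varepsilon=1/n$, then $\frac{1}{\varepsilon}\int_0^T\langle \A(r+\varepsilon)-\A(r),h^*\rangle^2\,dr\to 0$ for every fixed $h^*\in\chi_0$ while $A_\chi(1/n)$ stays bounded away from $0$. Note also that your reduction inverts the roles in Proposition \ref{PFQVZ} in an inadmissible way: there the vanishing factor comes from a zero \emph{scalar} quadratic variation, which is a norm-level hypothesis, while the other process contributes only boundedness through \textbf{H1}; you are trying to extract a vanishing norm-level factor from a hypothesis that is only pointwise.

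The paper's proof never attempts to make $A_\nu(\varepsilon)$ small, precisely because it cannot. It proves the convergence $[\A,\N]_\nu^{\varepsilon}(\phi)\to 0$ directly: first for elementary tensors $\phi=h^*\otimes k^*$, where Cauchy--Schwarz produces the product of $[\A,\A]^\varepsilon_\chi(h^*\otimes h^*)$ (which tends to $0$ by the pointwise hypothesis (b), evaluated at the single element $h^*\otimes h^*\in\chi$) and of the approximate quadratic variation of $\langle\N,k\rangle$; then for the dense subspace of finite linear combinations by linearity; and finally for all $\phi\in\nu$ by combining the bound \eqref{eq:dacitare-primo-passo} --- which uses \textbf{H1} and the scalar quadratic variation of $\N$ to obtain \emph{boundedness} only --- with the Banach--Steinhaus theorem for $F$-spaces. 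If you want to repair your argument, this pointwise-plus-density-plus-equicontinuity route is the one to follow; the conclusion one obtains is $[\A,\N]_\nu(\phi)=0$ for every $\phi\in\nu$, without any claim that the scalar quantity $A_\nu(\varepsilon)$ itself vanishes.
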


\begin{proof}
Taking into account Lemma \ref{lm:era52},
$\chi$ is  a Chi-subspace of $(H\hat\otimes_\pi H)^*$ and
 $\nu$ is a Chi-subspace of $(H\hat\otimes_\pi H_1)^*$.
Let $\N$ be a continuous local martingale with values in $H_1$. 
We need to show that $[\A, \N]_\nu=0$. 
We consider the random maps
$T^\epsilon \colon  \nu \times \Omega \to \mathcal{C}([0,T])$ defined by
\[
T^\epsilon(\phi) := [\A, \N]_\nu^{\epsilon} (\phi) = \frac{1}{\epsilon} 
\int_0^\cdot \!\!\! \,_{\nu^*}\!\!\left \langle (\A(r+\epsilon) - \A(r)) 
\otimes (\N(r+\epsilon) - \N(r)), \phi \right\rangle_{\nu} \! \ud r,
\]
for $\phi \in \nu$.

\emph{Step 1.} Suppose that $\phi = h^*\otimes k^*$ for $h^* \in\chi_0$ and $k \in H_1$. Then 
\begin{multline}
T^\epsilon(\phi)(t) = \frac{1}{\epsilon} \int_0^t \,_{\chi_0^*}\left \langle (\A(r+\epsilon) - \A(r)), h^* \right \rangle_{\chi_0} \, \left\langle (\N(r+\epsilon) - \N(r)), k \right\rangle_{H_1} \ud r\\
\leq 
\left [ \frac{1}{\epsilon} \int_0^t \,_{\chi_0^*}\left \langle (\A(r+\epsilon) - \A(r)), h^* \right \rangle^2_{\chi_0} \ud r 
  \frac{1}{\epsilon} \int_0^t \left\langle (\N(r+\epsilon) - \N(r)), k \right\rangle^2_{H_1} \ud r\right ]^{1/2}\\
=
\left [\frac{1}{\epsilon} \int_0^t \,_{\chi^*}\left \langle (\A(r+\epsilon) - \A(r))^{\otimes 2}, h^*\otimes h^* \right \rangle_{\chi} \ud r \right ]^{\frac{1}{2}} \\
 \times \left [\frac{1}{\epsilon} \int_0^t \left\langle (\N(r+\epsilon) - \N(r)), k \right\rangle^2_{H_1} \ud r\right  ]^{\frac{1}{2}},
\end{multline}
that converges ucp to
\[
\left ( [\A,\A](t)(h^* \otimes h^*) [\N,\N]^{\rm cl}(t)(k^*\otimes k^*)
 \right )^{1/2} = 0,
\]
since the quadratic quadratic variation of a local martingale
is the classical one and taking into account item (i) of Remark \ref{rm:propertyP}.

\emph{Step 2.} We denote  by  $\mathcal{D}$ the linear combinations of elements of the form
 $h^* \otimes k^* $ for $h^* \in \chi_0$ and $k \in H_1$. We remark that
 $\mathcal{D}$ is dense in $\nu$. From the convergence found in \emph{Step 1}, it follows that, for every $\phi\in \mathcal{D}$, ucp we have $T^{\epsilon} (\phi) \xrightarrow{\epsilon \to 0} 0$.

\emph{Step 3.} We consider a generic $\phi\in\nu$. By Lemma \ref{lm:che-era-remark},
for $t \in [0,T]$ it follows
\begin{multline}
\label{eq:dacitare-primo-passo}
|T^{\epsilon}(\phi)(t)| \leq |\phi|_{\nu} \int_0^t \frac{\left | (\A(r+\epsilon) - \A(r)) \otimes (\N(r+\epsilon) - \N(r)) \right |_{\nu^*}} {\epsilon} \ud r \\
= |\phi|_{\nu} \frac{1}{\epsilon}\int_0^t \left | (\N(r+\epsilon) - \N(r)) \right |_{H_1} \left| (\A(r+\epsilon) - \A(r)) \right |_{\chi_0^*} \ud r\\
\leq 
|\phi|_{\nu} \left ( \frac{1}{\epsilon}\int_0^t \left | (\N(r+\epsilon) - \N(r)) \right |^2_{H_1} \ud r \; 
\frac{1}{\epsilon}\int_0^t
\left| (\A(r+\epsilon) - \A(r)) \right |^2_{\chi_0^*} \ud r \right )^{\frac12}\\
=
|\phi|_{\nu} \Bigg ( \frac{1}{\epsilon}\int_0^t \left | (\N(r+\epsilon) - \N(r)) \right |^2_{H_1} \ud r  \times 
\frac{1}{\epsilon}\int_0^t
\left| (\A(r+\epsilon) - \A(r))^{\otimes 2} \right |_{\chi^*} \ud r \Bigg )^{\frac12}.
\end{multline}
To prove that $[\A, \N]_\nu=0$ we check the corresponding conditions 
\textbf{H1} and \textbf{H2} of Definition \ref{def:covariation}.
 By Lemma \ref{lm:L411} we know that $\N$ admits a global 
quadratic variation i.e. a $(H_1\otimes H_1)^*$-quadratic variation.
 By condition \textbf{H1} of the Definition \ref{def:covariation}  related to
 $(H_1\otimes H_1)^*$-quadratic variation for the process 
$\N$ and the $\chi$-quadratic variation of $\A$, for any sequence $(\epsilon_n)$
converging to zero, there is a subsequence   $(\epsilon_{n_k})$
such that  the sequence $T^{\epsilon_{n_k}}(\phi)$ is bounded  for any $\phi$ 
in the $\mathcal{C}[0,T]$ metric a.s.
 condition \textbf{H1} of the $\nu$-covariation. By Banach-Steinhaus for $F$-spaces (Theorem 17, Chapter II in \cite{DunfordSchwartz58}) it follows that $T^\epsilon (\phi)\xrightarrow{\epsilon\to 0} 0$ ucp for all $\phi \in \nu$ and so condition \textbf{H2} and the final result follows.
\end{proof}

\begin{Corollary}
\label{Cor:codi}
Assume that the hypotheses of Proposition \ref{pr:dirichlet-weak-dirichlet} 
are satisfied. If $\X$ is a $\chi$-Dirichlet process then we have the following.
\begin{itemize}
 \item[(i)] $\X$ is a $\nu$-weak-Dirichlet process.
 \item[(ii)] $\X$ is a $\chi$-weak Dirichlet process.
\item[(iii)] $\X$ is a $\chi$-finite-quadratic-variation process.
\end{itemize}
\end{Corollary}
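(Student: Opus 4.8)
The plan is to read off the three assertions from the defining decomposition $\X = \M + \A$ of a $\chi$-Dirichlet process (Definition \ref{def:chi-Dirichlet-process}): here $\M$ is a continuous $H$-valued local martingale and $\A$ is a continuous zero $\chi$-quadratic variation process with $\A(0)=0$, and $\A = \X - \M$ is adapted. For (i) I would apply Proposition \ref{pr:dirichlet-weak-dirichlet} directly, keeping the space $\nu = \chi_0 \hat\otimes_\pi H_1^*$ of its statement: the zero $\chi$-quadratic variation of $\A$ yields that $\A$ is $\nu$-martingale-orthogonal, so $\X = \M + \A$ exhibits $\X$ as a $\nu$-weak-Dirichlet process in the sense of Definition \ref{def:chi-weak-Dirichlet-process}.

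For (ii) I would invoke Proposition \ref{pr:dirichlet-weak-dirichlet} a second time, now with $H_1 := H$, so that the space appearing in its statement becomes $\nu' := \chi_0 \hat\otimes_\pi H^*$; this gives $[\A,\N]_{\nu'} = 0$ for every continuous $H$-valued local martingale $\N$. Since $\chi_0 \hookrightarrow H^*$ continuously, there is a continuous inclusion of Chi-subspaces $\chi = \chi_0 \hat\otimes_\pi \chi_0 \subseteq \nu'$ inside $(H \hat\otimes_\pi H)^*$, and hence $|\cdot|_{\chi^*} \le C|\cdot|_{(\nu')^*}$. Consequently condition \textbf{H1} of Definition \ref{def:covariation} relative to $\nu'$ forces the same condition relative to $\chi$, while for $\phi \in \chi$ the approximating processes $[\A,\N]_\chi^\epsilon(\phi)$ and $[\A,\N]_{\nu'}^\epsilon(\phi)$ literally coincide and therefore converge to $0$; thus $[\A,\N]_\chi = 0$, so $\A$ is $\chi$-martingale-orthogonal and $\X$ is a $\chi$-weak-Dirichlet process.

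For (iii) I would use the bilinearity of the $\epsilon$-approximations to split $[\X,\X]_\chi^\epsilon = [\M,\M]_\chi^\epsilon + [\M,\A]_\chi^\epsilon + [\A,\M]_\chi^\epsilon + [\A,\A]_\chi^\epsilon$ and treat the four terms in turn. The first converges because the continuous local martingale $\M$ admits a global, hence $\chi$-, covariation by Lemmas \ref{lm:after-definition-chi-D}, \ref{lm:RChiGlob} and \ref{lm:L411}; the last vanishes by the zero $\chi$-quadratic variation of $\A$; and the two mixed terms vanish by item (ii) applied with $\N = \M$, using for $[\M,\A]_\chi$ the flip isometry $g^* \otimes h^* \mapsto h^* \otimes g^*$ of $\chi = \chi_0 \hat\otimes_\pi \chi_0$, which identifies $[\M,\A]_\chi^\epsilon(\phi)$ with $[\A,\M]_\chi^\epsilon$ evaluated at the flipped test function. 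To make sure that $(\X,\X)$ genuinely admits a $\chi$-covariation I would verify condition \textbf{H1} for $\X$ itself through the elementary bound $|(\X(r+\epsilon)-\X(r))^{\otimes 2}|_{\chi^*} = |\X(r+\epsilon)-\X(r)|_{\chi_0^*}^2 \le 2|\M(r+\epsilon)-\M(r)|_{\chi_0^*}^2 + 2|\A(r+\epsilon)-\A(r)|_{\chi_0^*}^2$ (in the spirit of \eqref{eq:step-da-richiamare}), which reduces \textbf{H1} for $\X$ to the corresponding controls already available for $\M$ and $\A$.

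The bookkeeping in (i) and the Cauchy--Schwarz estimates behind the mixed terms are routine, being essentially those already performed in Proposition \ref{pr:dirichlet-weak-dirichlet}. The main obstacle I anticipate lies in (iii): the $\chi$-covariation is not additive a priori, so one must verify condition \textbf{H1} for the single process $\X$ rather than merely for its summands, and then argue that the subsequential \textbf{H1} bounds, combined with the termwise convergences, upgrade to the full ucp convergence and the $\chi^*$-valued bounded-variation structure demanded by condition \textbf{H2}.
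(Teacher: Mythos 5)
Your proposal is correct and follows essentially the same route as the paper's proof: (i) is read off Proposition \ref{pr:dirichlet-weak-dirichlet}, (ii) comes from the same proposition applied with $H_1=H$ together with the inclusion $\chi\subseteq\chi_0\hat\otimes_\pi H^*$, and (iii) is the bilinear expansion of $[\X,\X]_\chi$ in which $\M$ contributes its $\chi$-quadratic variation (via Lemmas \ref{lm:L411} and \ref{lm:RChiGlob}(i)) and the remaining terms vanish by (ii) and the hypothesis on $\A$. The extra details you supply --- the flip isometry relating $[\M,\A]_\chi$ to $[\A,\M]_\chi$ and the explicit verification of condition \textbf{H1} for the sum --- are exactly what the paper compresses into the phrase ``by the bilinearity of the $\chi$-covariation''.
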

\begin{proof}
(i)   follows by Proposition \ref{pr:dirichlet-weak-dirichlet}. As far as (ii) is concerned, let   $\X = \M + \A$ be 
a  $\chi$-Dirichlet process decomposition, where $\M$ is a local martingale. Setting $H_1 = H$, then $\chi$ is included
in $\nu$, so  Proposition   \ref{pr:dirichlet-weak-dirichlet}
implies that $\A$ is a $\chi$-orthogonal process 
and so (ii) follows. We prove now (iii):  By Lemma \ref{lm:L411} and Lemma \ref{lm:RChiGlob}(i) $\M$ admits a $\chi$-quadratic variation.  By
 the bilinearity of the $\chi$-covariation, it is enough to show
that $[\M,\A]_\chi = 0$. This follows from item (ii).
\end{proof}

\begin{Proposition}
\label{pr:DiGirolamiRusso37}
Let $B_1$ and $B_2$ be two real separable Banach spaces and $\chi$ a Chi-subspace of $(B_1\hat\otimes_{\pi} B_2)^*$. Let $\X$ and $\Y$ be two stochastic processes with values respectively in $B_1$ and $B_2$ such that $(\X,\Y)$ admits a
 $\chi$-covariation. Let $G$ be a continuous measurable process $G: [0,T] \times \Omega \to \mathcal{K}$ where $\mathcal{K}$ is a closed separable subspace of $\chi$. 
Then for every $t\in [0,T]$
\begin{equation}   		\label{eq SDFR}  
\int_{0}^{t} \tensor[_\chi]{\langle G(\cdot,r), [\X,\Y]^{\epsilon} (\cdot, r)\rangle}{_{\chi^*}} \ud r 
\xrightarrow[\epsilon \longrightarrow 0]{} 
\int_{0}^{t} \tensor[_\chi]{\langle G(\cdot,r),d\widetilde{[\X,\Y]}(\cdot,r)\rangle}{_{\chi^*}}
\end{equation}
in probability.
\end{Proposition}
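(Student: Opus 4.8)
The plan is to establish the convergence in probability through the subsequence criterion: it suffices to show that from every sequence $\epsilon_n \searrow 0$ one can extract a subsequence along which \eqref{eq SDFR} holds almost surely. So I would fix such a sequence and, invoking condition \textbf{H1} of Definition \ref{def:covariation} for the quantity $A(\epsilon)$ of \eqref{Aepsilon}, pass to a subsequence (still written $\epsilon_n$) with $R := \sup_n A(\epsilon_n) < \infty$ a.s., recalling that $A(\epsilon) = \int_0^T |[\X,\Y]^{\epsilon}(\cdot,r)|_{\chi^*}\,\ud r$. Since $\mathcal{K}$ is separable, I fix a countable dense set $\{\psi_j\}_{j\in\N} \subseteq \mathcal{K} \subseteq \chi$. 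By \textbf{H2}(i), $[\X,\Y]_\chi^{\epsilon_n}(\psi_j) \to [\X,\Y]_\chi(\psi_j)$ ucp for each $j$, so after a diagonal extraction I may assume this convergence is uniform on $[0,T]$, simultaneously for all $j$, almost surely. The rest of the argument is then run pathwise on the full-measure event where, in addition, $G(\cdot)$ is continuous and $\widetilde{[\X,\Y]}$ has finite total variation $V := \|\widetilde{[\X,\Y]}\|_{TV,[0,T]}$ in $\chi^*$.

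Next I would reduce to step integrands. For a fixed path, continuity of $G$ on the compact $[0,T]$ with values in $\mathcal{K}$ yields a compact image and uniform continuity, so $G$ is approximated uniformly by step functions $G_m(r) = \sum_i \psi_{j_i}\,\mathbf{1}_{[t_i,t_{i+1})}(r)$ valued in $\{\psi_j\}$, with $e_m := \sup_{r\in[0,T]}|G(r)-G_m(r)|_\chi \to 0$ as $m\to\infty$. For each such $G_m$ the regularized integral telescopes into finitely many increments of the processes $[\X,\Y]_\chi^{\epsilon_n}(\psi_{j_i})$ (by the very definition \eqref{eq:def-chi-epsilon}), and likewise the limit integral telescopes into increments of $[\X,\Y]_\chi(\psi_{j_i})$; hence the pathwise uniform convergence fixed above gives, for each fixed $m$,
\[
\int_0^t {}_\chi\langle G_m(r), [\X,\Y]^{\epsilon_n}(\cdot,r)\rangle_{\chi^*}\,\ud r \xrightarrow[n\to\infty]{} \int_0^t {}_\chi\langle G_m(r), \ud\widetilde{[\X,\Y]}(\cdot,r)\rangle_{\chi^*}.
\]

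For the error control I would denote by $D^{\epsilon_n}(t)$ the difference of the two sides of \eqref{eq SDFR} and split $G = G_m + (G-G_m)$. The contribution of $G-G_m$ to the regularized term is bounded by $e_m\,A(\epsilon_n) \le e_m R$, while its contribution to the limit term is bounded by $e_m V$; therefore, on the fixed path, $\limsup_{n\to\infty}|D^{\epsilon_n}(t)| \le e_m\,(R + V)$, and letting $m\to\infty$ forces $D^{\epsilon_n}(t)\to 0$. As this holds on a full-measure set, convergence is almost sure along the extracted subsequence, and the subsequence criterion delivers \eqref{eq SDFR} in probability.

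The main obstacle is that the regularized densities $[\X,\Y]^{\epsilon}(\cdot,r)$ are controlled in $\chi^*$-norm only along subsequences, which is precisely the content of \textbf{H1}; this is what prevents a direct $\epsilon$-limit and forces the whole argument through the subsequence characterization of convergence in probability together with the a.s. bound $R$. The second delicate point, resolved by the separability of $\mathcal{K}$, is that the integrand $G$ is time-dependent and random rather than a fixed $\phi\in\chi$, so \textbf{H2}(i) cannot be applied to it directly: the passage from fixed test elements $\psi_j$ to the moving integrand $G$ must go through the uniform step-function approximation, with the approximation error kept uniform in $\epsilon$ by the bound $R$ and in the limit by the total variation $V$.
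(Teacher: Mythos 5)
Your proof is correct, but note that the paper itself does not prove this proposition: its ``proof'' is a one-line citation to Proposition 3.7 of \cite{DiGirolamiRusso11Fukushima}, so there is no in-paper argument to compare against. Your reconstruction is a valid self-contained proof, and it uses exactly the ingredients the hypotheses were designed for, in the way the cited reference does: the subsequence characterization of convergence in probability, condition \textbf{H1} to get the a.s.\ bound $R=\sup_n A(\epsilon_n)<\infty$ along a subsequence (which is what makes the approximation error uniform in $\epsilon$), the separability of $\mathcal{K}$ to reduce to a fixed countable family $\{\psi_j\}$ of test elements to which \textbf{H2}(i) applies, and the pathwise bounded variation of $\widetilde{[\X,\Y]}$ from \textbf{H2}(ii) to control the error on the limit side. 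Two points you pass over quickly but which are harmless: (1) the identification $\widetilde{[\X,\Y]}(\cdot,t)(\psi_j)=[\X,\Y]_\chi(\psi_j)(\cdot,t)$ in \textbf{H2}(ii) holds a.s.\ for each fixed $(\psi_j,t)$, so to run the telescoping pathwise you should pick a single full-measure event using countability in $j$, rationals in $t$, and the right-continuity of both sides; (2) your step functions $G_m$ and the error $e_m$ are built pathwise (the modulus of continuity of $G$ depends on $\omega$), which is fine precisely because the full-measure event on which the uniform convergences for all $\psi_j$ hold is fixed in advance and does not depend on the partition. With those remarks made explicit, the argument is complete.
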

\begin{proof}
See \cite{DiGirolamiRusso11Fukushima} Proposition 3.7.
\end{proof}

We state below the most important result related to
 the stochastic calculus part of the paper. It generalizes the 
finite dimensional result contained in \cite{GozziRusso06} Theorem 4.14. Indeed it provides a (generalized) 
Fukushima-Dirichlet decomposition of a $C^{0,1}$ function $u$ of  a (suitably defined weak) Dirichlet process $\X$.
The result below consitutes indeed a chain rule type allowing to expand 
a non-smooth function $u$ of 
a convolution type process $\X$, in substitution of It\^o formula.
Similar results have been used for several purposes in stochastic control and forward-backward stochastic differential equations
(FBSDEs): in the first case in finite dimension by \cite{GozziRusso06Stoch}  to obtain a verification theorem and 
by \cite{FuhrmanTessitore} to obtain identify the solution the process $Z$ as part of the solution
of an infinite dimensional FBSDE, as 
 a  {\it generalized gradient} of a solution a semilinear PDE.
The difficulty here is that the underlying process $\X$ is far from being a semimartingale.

 The definition of real weak Dirichlet process is recalled in Definition \ref{def:Dirweak}.

\begin{Theorem}
\label{th:prop6}
Let $\nu_0$ be a Banach subspace continuously embedded in $H^*$. Define $\nu:= \nu_0\hat\otimes_\pi\mathbb{R}$ and $\chi:=\nu_0\hat\otimes_\pi\nu_0$. Let
 $F\colon [0,T] \times H \to \mathbb{R}$  be a $C^{0,1}$-function.  Denote with $\partial_x F$ the Frechet derivative of $F$ with respect to  $x$ and assume that the mapping $(t,x) \mapsto \partial_xF(t,x)$ is continuous from $[0,T]\times H$ to $\nu_0$.  Let $ \X(t) = \M(t) + \A(t)$ for $t\in [0,T]$ be an $\nu$-weak-Dirichlet process with finite $\chi$-quadratic variation. Then $Y(t):= F(t, \X(t))$ is a (real) weak Dirichlet process with local martingale part
\[
R(t) = F(0, \X(0)) + \int_0^t \left\langle \partial_xF(r,\X(r)), \ud \M(r) \right\rangle.
\]
\end{Theorem}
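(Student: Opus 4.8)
The plan is to set $R(t):=F(0,\X(0))+\int_0^t\langle\partial_xF(r,\X(r)),\ud\M(r)\rangle$ and $A:=Y-R$, and to prove that $A$ is real‑martingale‑orthogonal, i.e. $[A,N]=0$ for every continuous real local martingale $N$; since $A(0)=Y(0)-R(0)=0$, this exhibits $Y=R+A$ as a real weak Dirichlet process with the announced local martingale part. First I would check that $R$ is well defined and is a continuous local martingale: writing $G(r):=\partial_xF(r,\X(r))$, the assumed continuity of $(t,x)\mapsto\partial_xF(t,x)$ into $\nu_0$ makes $G$ a continuous, adapted, $\nu_0$‑valued (hence $H^*$‑valued) process, so $\sup_{r\le T}\|G(r)\|_{H^*}<\infty$ and $\int_0^T\|G(r)\|^2\,\ud[\M]^{\mathbb R,cl}(r)<\infty$ a.s.; Lemma \ref{lm:Prop3} then gives that $R$ is a local martingale. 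By bilinearity of the covariation it is enough to prove $[Y,N]=[R,N]$. The engine is the first‑order expansion (valid since $F$ is $C^{0,1}$)
$$Y(r+\epsilon)-Y(r)=\langle G(r),\M(r+\epsilon)-\M(r)\rangle+\langle G(r),\A(r+\epsilon)-\A(r)\rangle+\mathcal R_\epsilon(r)+\psi_\epsilon(r),$$
where $\mathcal R_\epsilon(r)=\int_0^1\langle\partial_xF(r+\epsilon,\X(r)+a(\X(r+\epsilon)-\X(r)))-G(r),\X(r+\epsilon)-\X(r)\rangle\,\ud a$ and $\psi_\epsilon(r)=F(r+\epsilon,\X(r))-F(r,\X(r))$. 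The point of this particular splitting is that the pure‑time increment $\psi_\epsilon(r)$ is $\mathscr F_r$‑measurable.

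Multiplying by $(N(r+\epsilon)-N(r))/\epsilon$, integrating over $[0,t]$ and letting $\epsilon\to0$, I would treat the four resulting terms separately. Regarding $G$ as a continuous process with values in a separable subspace of $\nu=\nu_0\hat\otimes_\pi\mathbb R$, the first term equals $\int_0^t{}_{\nu}\langle G(r),[\M,N]^{\epsilon}(r)\rangle_{\nu^*}\,\ud r$ and, by Proposition \ref{pr:DiGirolamiRusso37}, converges to $\int_0^t\langle G(r),\ud\widetilde{[\M,N]}_\nu(r)\rangle$; by Proposition \ref{pr:thA} one has $\widetilde{[\M,N]}=[\M,N]^{cl}$, and Lemma \ref{lm:Prop3} identifies this limit with $[R,N](t)$. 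The second term equals $\int_0^t{}_\nu\langle G(r),[\A,N]^{\epsilon}(r)\rangle_{\nu^*}\,\ud r$; since $\X$ is $\nu$‑weak Dirichlet, $\A$ is $\nu$‑martingale‑orthogonal (Definition \ref{DNuDir}), so $[\A,N]_\nu=0$ and Proposition \ref{pr:DiGirolamiRusso37} forces this term to $0$. For the remainder, continuity of $\partial_xF$ yields $|\mathcal R_\epsilon(r)|\le\omega(\epsilon)\,\|\X(r+\epsilon)-\X(r)\|_{\nu_0^*}$ with $\omega(\epsilon)\to0$, and Cauchy--Schwarz together with the identity $\|a\|_{\nu_0^*}^2=|J(a^{\otimes2})|_{\chi^*}$ from \eqref{eq:step-da-richiamare} bounds its contribution by $\omega(\epsilon)\,A(\epsilon)^{1/2}\,([N,N]^{\epsilon,\mathbb R}(t))^{1/2}$, where $A(\epsilon)$ is as in \eqref{Aepsilon} with $\chi=\nu_0\hat\otimes_\pi\nu_0$; condition \textbf{H1} for the $\chi$‑quadratic variation of $\X$ keeps $A(\epsilon)$ bounded, so this term vanishes.

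The main obstacle is the pure‑time term $I_\epsilon(t):=\frac1\epsilon\int_0^t\psi_\epsilon(r)(N(r+\epsilon)-N(r))\,\ud r$: a Cauchy--Schwarz estimate is too lossy, since $C^{0,1}$‑regularity provides no rate in $\psi_\epsilon(r)\to0$. Instead I would exploit the $\mathscr F_r$‑measurability of $\psi_\epsilon(r)$. After localizing by stopping times so that $\X$ remains in a fixed ball---whence, by uniform continuity of $F$ on compacts, $|\psi_\epsilon(r)|\le\omega(\epsilon)$ with a \emph{deterministic} $\omega(\epsilon)\to0$---and so that $N$ is a bounded martingale, I would estimate $\mathbb E[I_\epsilon(t)^2]$ by Fubini. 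For $|r-s|\ge\epsilon$, say $s\ge r+\epsilon$, the factors $\psi_\epsilon(r),\psi_\epsilon(s),N(r+\epsilon)-N(r)$ are $\mathscr F_s$‑measurable while $\mathbb E[N(s+\epsilon)-N(s)\mid\mathscr F_s]=0$, so that contribution cancels and only the diagonal band $|r-s|<\epsilon$ survives. Bounding $|\psi_\epsilon(r)\psi_\epsilon(s)|\le\omega(\epsilon)^2$ and using $\mathbb E[(N(r+\epsilon)-N(r))^2]=\mathbb E[[N,N](r+\epsilon)-[N,N](r)]$, one obtains
$$\mathbb E[I_\epsilon(t)^2]\le\frac{\omega(\epsilon)^2}{\epsilon^2}\,2\epsilon\int_0^t\mathbb E\big[(N(r+\epsilon)-N(r))^2\big]\,\ud r\le 2\,\omega(\epsilon)^2\,\mathbb E\big[[N,N](T+\epsilon)\big]\xrightarrow[\epsilon\to0]{}0,$$
so $I_\epsilon(t)\to0$ in $L^2$, hence in probability.

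Collecting the four limits gives $[Y,N]=[R,N]$ up to the localizing stopping times; letting the localizing levels tend to infinity removes the localization and yields $[A,N]=[Y,N]-[R,N]=0$ for every continuous real local martingale $N$. Together with $A(0)=0$ and the fact that $R$ is a local martingale, this proves that $Y=R+A$ is a real weak Dirichlet process with local martingale part $R$. Besides the second‑moment estimate, the only point needing mild care is the strong measurability of $G$ into a closed separable subspace $\mathcal K\subseteq\nu$, required to invoke Proposition \ref{pr:DiGirolamiRusso37}; this is harmless under the standing separability assumptions, the paths of $G$ being continuous and hence a.s.\ separably valued.
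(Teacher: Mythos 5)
Your overall strategy is the same as the paper's: the paper splits the $\epsilon$-approximation of $[F(\cdot,\X(\cdot)),N]$ into $I_1=I_{11}+I_{12}$ and $I_2$, treats $I_{11}$ exactly as you treat your first two terms (you merely split the $\X$-increment into its $\M$- and $\A$-parts \emph{before} invoking Proposition \ref{pr:DiGirolamiRusso37}, while the paper applies it to $\X$ and then uses $\widetilde{[\X,N]}_\nu=\widetilde{[\M,N]}_\nu+\widetilde{[\A,N]}_\nu=\widetilde{[\M,N]}_\nu$, together with Proposition \ref{pr:thA} and Lemma \ref{lm:Prop3}), and treats the Taylor remainder $I_{12}$ with the same modulus-of-continuity/Cauchy--Schwarz/\textbf{H1} estimate you use for $\mathcal{R}_\epsilon$ (there the random modulus is harmless, since no expectations are taken). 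The one place you genuinely depart from the paper is the pure-time term $I_\epsilon$, and that is precisely where your argument breaks.

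The gap is this: you localize so that ``$\X$ remains in a fixed ball'' and deduce, ``by uniform continuity of $F$ on compacts'', that $|\psi_\epsilon(r)|\le\omega(\epsilon)$ with a \emph{deterministic} $\omega(\epsilon)\to 0$. But $H$ is infinite dimensional, so closed balls are not compact, and a function that is merely continuous on $[0,T]\times H$ need not be uniformly continuous --- nor even bounded --- on $[0,T]\times D$ for a closed ball $D$. Hence stopping at $\inf\{t:|\X(t)|\ge n\}$ yields no deterministic modulus, and your $L^2$ computation fails at both of its hinges: (a) the cancellation of the off-diagonal terms via $\mathbb{E}\left[N(s+\epsilon)-N(s)\mid\mathscr{F}_s\right]=0$ needs the product $\psi_\epsilon(r)\psi_\epsilon(s)(N(r+\epsilon)-N(r))(N(s+\epsilon)-N(s))$ to be integrable, and (b) extracting $\omega(\epsilon)^2$ from the expectation needs the bound on $\psi_\epsilon$ to be nonrandom. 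What the hypotheses actually give is $\sup_r|\psi_\epsilon(r)|\to 0$ a.s., by uniform continuity of $F$ on $[0,T]\times K(\omega)$ with $K(\omega)$ the (random, compact) closure of the path of $\X(\omega)$; this random bound cannot be pulled out of $\mathbb{E}\left[I_\epsilon(t)^2\right]$. Your idea can be repaired: replace $\psi_\epsilon(r)$ by $\psi_\epsilon(r)\mathbf{1}_{\{|\psi_\epsilon(r)|\le\eta\}}$, which is still $\mathscr{F}_r$-measurable and bounded, so the orthogonality argument goes through and gives $\limsup_\epsilon\mathbb{E}\left[\tilde I_\epsilon(t)^2\right]\le 2\eta^2\,\mathbb{E}\left[[N](T)\right]$, while $\mathbb{P}(I_\epsilon\ne\tilde I_\epsilon)\le\mathbb{P}(\sup_r|\psi_\epsilon(r)|>\eta)\to 0$; then let $\eta\to 0$. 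The paper's own route avoids expectations altogether: by stochastic Fubini, $I_2(t,\epsilon)=\int_0^t\Gamma(u,\epsilon)\,\ud N(u)$ plus a vanishing boundary term, and Problem 2.27, Chapter 3 of \cite{KaratzasShreve88} requires only $\int_0^T\Gamma(u,\epsilon)^2\,\ud[N](u)\to 0$ \emph{in probability}, which is exactly what the almost sure random bound delivers.
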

\begin{proof}
By definition $\X$ can be written as the sum of a continuous local martingale $\M$ and a $\nu$-martingale-orthogonal process $\A$. 

Let $N$ be a real-valued local martingale. Taking into account 
Lemma \ref{lm:Prop3} and that the covariation of two  real local martingales
defined in \eqref{DefRealIntCov},
coincides with the classical covariation,
 it is enough to prove that 
\[
[F(\cdot, \X(\cdot)), N](t) = \int_0^t \left\langle \partial_xF(r,\X(r)), \ud [\M,N]^{cl}(r) \right\rangle, \qquad \text{for all } t\in [0,T].
\] Let $t \in [0,T]$. We evaluate the $\epsilon$-approximation of
 the covariation, i.e.
\[
\frac{1}{\epsilon} \int_0^t \left ( F(r+\epsilon, \X(r+\epsilon)) - F(r, \X(r)) \right ) 
\left ( N(r+\epsilon) - N(r) \right ) \ud r.
\]
It equals $I_1(t,\epsilon) + I_2(t,\epsilon)$, where
\[
I_1(t,\epsilon) =  \int_0^t \left ( F(r+\epsilon, \X(r+\epsilon)) - F(r+\epsilon, \X(r)) \right ) 
 \frac{ \left ( N(r+\epsilon) - N(r) \right )}{\epsilon} \ud r
\]
and
\[
I_2(t,\epsilon) =  \int_0^t \left ( F(r+\epsilon, \X(r)) - F(r, \X(r)) \right ) 
\frac{\left( N(r+\epsilon) - N(r) \right )}{\epsilon} \ud r.
\]
We prove now that
\begin{equation}
\label{eq:11}
I_1(t,\epsilon) \xrightarrow{\epsilon\to 0} \int_0^t \left\langle \partial_xF(r, \X(r)) , \ud [\M,N]^{cl}(r) \right\rangle
\end{equation}
in probability; in fact $I_1(t,\epsilon) = I_{11}(t,\epsilon) + I_{12}(t,\epsilon)$ where 
\[
I_{11}(t,\epsilon) := \int_0^t \frac{1}{\epsilon} \left\langle \partial_xF(r, \X(r)), \X(r+\epsilon) - \X(r) \right\rangle (N(r+\epsilon) - N(r)) \ud r,
\]
\begin{multline}
\nonumber
I_{12}(t,\epsilon) := \int_0^1 \int_0^t \frac{1}{\epsilon} \left\langle \partial_xF(r+\epsilon, a\X(r) + (1-a) \X(r+\epsilon)) - \partial_xF(r, \X(r)), \right .\\ 
\left . \X(r+\epsilon) - \X(r) \right\rangle (N(r+\epsilon) - N(r)) \ud r \ud a.
\end{multline}
Now we apply Proposition \ref{pr:DiGirolamiRusso37} with $B_1= H$, $B_2 = \mathbb{R}$, $\X=\M$, $\Y=N$, $\chi=\nu$ 
\begin{equation}
\label{eq:EQ11bis}
I_{11}(t,\epsilon) \xrightarrow{\epsilon\to 0} \int_0^t \tensor[_{\nu}]{\left\langle \partial_xF(r, \X(r)) , \ud \widetilde{[\X,N]}(r) \right\rangle}{_{\nu^*}}.
\end{equation}
Recalling that  $\X=\M+\A$, we remark that
 $[\X,N]_\nu$ exists and the $\nu^*$-valued process $\widetilde{[\X,N]}_\nu$
 equals $\widetilde{[\M,N]}_\nu + \widetilde{[\A,N]}_\nu = \widetilde{[\M,N]}_\nu$,
 since $\A$ is a $\nu$-martingale orthogonal process. 
Taking into account  the formalism of Proposition \ref{pr:1},
Lemma \ref{lm:RChiGlob}(i) and Proposition \ref{pr:thA} if $\Phi \in H \equiv H^{*}$, we have
\[
\tensor[_{\nu}] {\left\langle \Phi, \widetilde{[\M,N]_\nu} 
\right\rangle}{_{\nu^*}}
=  \tensor[_{\nu_0}]{ \left\langle \Phi, \widetilde{[\M,N]_\nu}
 \right\rangle}{_{\nu_0^*}} 
 =
 \tensor[_{H^*}]{ \left\langle \Phi, \widetilde{[\M,N]}
 \right\rangle}{_{H^{**}}} 
=
 \tensor[_{H^*}]{ \left\langle \Phi, [\M,N]^{cl}
 \right\rangle}{_{H}}.
\]
Consequently, it is not difficult to show that
the right-hand side of (\ref{eq:EQ11bis}) gives $\int_0^t {}_{H^*}\left \langle \partial_x F(r, \X(r)) , \ud [\M,N]^{cl}(r) \right\rangle_{H}$.

For a fixed $\omega\in\Omega$ we consider the function $\partial_xF$ restricted to $[0,T]\times K$ where $K$ is the (compact) subset of $H$ obtained as convex hull of $\{ a \X(r_1) + (1-a) \X(r_2) \; : \; r_1, r_2 \in [0,T] \}$. $\partial_xF$ restricted to $[0,T]\times K$ is uniformly continuous with values in $\nu_0$.
Consequently, for  $\omega$-a.s. 
\begin{equation}
\label{eq:per-f-2}
|I_{12} (t,\epsilon)| \leq \int_0^T \delta\left (\partial_xF_{|[0,T]\times K} ; \epsilon +
\sup_{|r-t|\leq \epsilon} |\X(r) - \X(t)|_{\nu_0^*} \right )
\times  | \X(r+\epsilon) - \X(r)|_{\nu_0^*}
\frac{1}{\epsilon}|N(r+\epsilon) - N(r)| \ud r,
\end{equation}
where, for a uniformly continuous function 
$g:[0,T] \times K \rightarrow \nu_0$, $\delta(g ; \epsilon)$ is 
the modulus of continuity $\delta(g; \epsilon):= \sup_{|x-y|\leq \epsilon}
 |g(x) - g(y)|_{\nu_0}$. 
In previous formula we have identified $H$ with $H^{**}$ so that
$\vert x \vert_H \le \vert x \vert_{\nu_0^*}, \forall x \in H$.
So 
\eqref{eq:per-f-2} is lower than
\begin{multline} \label{E54}
 \delta\left (\partial_xF_{|[0,T]\times K} ; \epsilon +
\sup_{|s-t|\leq \epsilon} |\X(s) - \X(t)|_{\nu_0^*} \right )
\times \left ( \int_0^T \frac{1}{\epsilon}|N(r+\epsilon) - N(r)|^2 \ud r 
\int_0^T \frac{1}{\epsilon}|( \X(r+\epsilon) - \X(r) )|_{\nu_0^*}^2 \ud r \right )^{1/2}\\
=\delta\left (\partial_xF_{|[0,T]\times K} ; \epsilon +
\sup_{|s-t|\leq \epsilon} |\X(s) - \X(t)|_{\nu_0^*} \right )
\times \left ( \int_0^T \frac{1}{\epsilon}|N(r+\epsilon) - N(r)|^2 \ud r 
\int_0^T \frac{1}{\epsilon}|( \X(r+\epsilon) - \X(r) ) ^{\otimes 2}|_{\chi^*} \ud r \right )^{1/2},
\end{multline}
when $\varepsilon \rightarrow 0$, where we have used Lemma 
\ref{lm:che-era-remark} 
with the usual identification.
The right-hand side of \eqref{E54}, of course converges to zero, 
 since $\X$ (respectively $N$) is a $\chi$-finite quadratic variation process (respectively a real finite quadratic variation process)
and $X$ is also continuous as a $\nu_0^*$-valued process.

To conclude the proof of the proposition we only need to show that $I_2(t,\epsilon)\xrightarrow{\epsilon\to 0}0$. This is relatively simple since 
\[
I_2(t,\epsilon) = \frac{1}{\epsilon} \int_0^t \Gamma(u,\epsilon) \ud 
N(u) + R(t,\epsilon),
\]
where $R(t, \epsilon)$ is a boundary term such that $R(t, \epsilon)\xrightarrow{\epsilon\to 0}0$ in probability and
\[
\Gamma(u,\epsilon) = \frac{1}{\epsilon} \int_{(u-\epsilon)_+}^{u} 
\left ( F(r+\epsilon, \X(r)) - F(r, \X(r)) \right ) \ud r.
\]
Since $\int_0^T (\Gamma(u,\epsilon))^2 \ud [N](u) \to 0$ in probability, Problem 2.27, chapter 3 of \cite{KaratzasShreve88} implies that $I_2(\cdot, \epsilon) \to 0$ ucp. The result finally follows.
\end{proof}


\section{The case of convolution type processes }



\label{sec:SPDEs}

This section concerns applications of the stochastic calculus via regularization to convolution type processes. As a particular case the results apply to mild solutions of infinite dimensional stochastic evolution equations.

Assume that $H$ and $U$ are real separable Hilbert spaces, $Q \in \mathcal{L}(U)$, $U_0:=Q^{1/2} (U)$. Assume that  $\W_Q=\{\W_Q(t):0\leq t\leq T\}$  is an $U$-valued $\mathscr{F}_t$-$Q$-Wiener process with $\W_Q(0)=0$, $\mathbb{P}$ a.s. See Sections 2.1 and 2.2 of \cite{GawareckiMandrekar10} for the definition and properties of $Q$-Wiener processes and the definition of stochastic integral with respect to $\W_Q$. Denote by  $\mathcal{L}_2(U_0, H)$ the Hilbert space of the Hilbert-Schmidt operators from $U_0$ to $H$\footnote{The definition and the first properties of Hilbert-Schmidt operators can be found in Appendix A.2 of \cite{PeszatZabczyk07} and for more details in \cite{Ryan02}.}. 

In the whole section, as usual in the literature we refer (see e.g. \cite{DaPratoZabczyk92, GawareckiMandrekar10}) we will always identity, via Riesz Theorem, the spaces $H$ and $H^*$. In this way the duality between $H^*$ and $H$ reduces simply to the scalar product in $H$.

\begin{Lemma}
\label{lm:perPChainRule}
Consider an $\mathcal{L}(U,H)$-valued predictable process $\A$ such that $\int_0^T {\rm Tr} [\A(r) Q^{1/2} (\A(r) Q^{1/2})^*]  \ud r < \infty$ a.s. and define 
\begin{equation} \label{AA}
\M _t = \int_0^t  \A (r) \ud \W_Q(r), t \in [0,T].
\end{equation}
If $\X$ is an $H$-valued predictable process such that 
\begin{equation} \label{EChainRule11}
\int_0^T \langle \X (r), \A(r) Q^{1/2} (\A  Q^{1/2})^* \X(r) \rangle \ud r  < \infty, \,\, {\rm a.s.},
\end{equation}
then
\begin{equation} \label{AA1}
N (t) = \int_0^t \langle \X (r), \ud \M(r) \rangle, t \in [0,T],
\end{equation}
is well-defined  and it equals 
$N (t) = \int_0^t \langle \X (r), \A(r) \ud \W_Q(r) \rangle$ for $t \in [0,T]$.
\end{Lemma}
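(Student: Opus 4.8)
The plan is to read the right-hand side $\int_0^t \langle \X(r), \A(r) \ud \W_Q(r)\rangle$ as a \emph{real-valued} It\^o integral with respect to the $Q$-Wiener process $\W_Q$, and then to identify it with the integral $N$ of $\X$ against the $H$-valued local martingale $\M$ by an associativity (chain-rule) argument. First I would rewrite, for each $r$, $\langle \X(r), \A(r)\,\cdot\,\rangle = \langle \A(r)^* \X(r), \cdot\,\rangle_U$, so that the right-hand side becomes $\int_0^t \langle \Phi(r), \ud \W_Q(r)\rangle$ with the $U$-valued predictable integrand $\Phi(r):= \A(r)^* \X(r)$. The scalar integral against $\W_Q$ of such a $\Phi$ is well defined as soon as $\int_0^T |Q^{1/2}\Phi(r)|_U^2 \ud r < \infty$ a.s.; a direct computation gives $|Q^{1/2}\Phi(r)|_U^2 = \langle \X(r), \A(r) Q^{1/2}(\A(r) Q^{1/2})^* \X(r)\rangle$, which is exactly the integrand in \eqref{EChainRule11}. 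Hence the right-hand side is a well-defined continuous real local martingale.

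Next I would record that the (operator) tensor quadratic variation of $\M = \int_0^\cdot \A\,\ud \W_Q$ is $[\M]^{cl}(t) = \int_0^t (\A(r) Q^{1/2})(\A(r) Q^{1/2})^* \ud r$, so that the sharp integrability condition making $N(t) = \int_0^t \langle \X(r), \ud \M(r)\rangle$ meaningful, namely $\int_0^T \langle \X(r), \ud [\M]^{cl}(r)\, \X(r)\rangle < \infty$ a.s., coincides with \eqref{EChainRule11}. This matches the two integrability conditions and, after reduction to the square-integrable case, guarantees that $N$ is a well-defined continuous real local martingale (cf. Lemma \ref{lm:Prop3}).

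To prove the equality I would argue by localization and approximation, in the spirit of the proof of Theorem \ref{th:integral-forrward=ito-martingale}. Using a sequence of stopping times $\tau_n$ (chosen with respect to the sharp norm $\int_0^\cdot \langle \X, \ud[\M]^{cl}\X\rangle$) that reduce $\M$ to $\mathcal{M}^2(0,T;H)$ and $\Phi$ to square-integrable integrands, I reduce to the $\mathcal{M}^2$ case. There I verify the identity first for elementary integrands $\X(r) = \sum_i \xi_i I_{(t_i,t_{i+1}]}(r)$, with $\xi_i$ bounded and $\mathscr{F}_{t_i}$-measurable: on each interval the fixed (over the increment) functional $\langle \xi_i, \cdot\,\rangle$ passes through the stochastic integral, giving $\langle \xi_i, \M(t_{i+1}) - \M(t_i)\rangle = \int_{t_i}^{t_{i+1}} \langle \A(r)^*\xi_i, \ud\W_Q(r)\rangle$, and summing yields the claim. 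Finally I approximate a general $\X$ by such elementary processes $\X_n$ in the norm $\big(\mathbb{E}\int_0^T \langle \X - \X_n, (\A Q^{1/2})(\A Q^{1/2})^*(\X-\X_n)\rangle \ud r\big)^{1/2}$ induced by \eqref{EChainRule11}, and pass to the limit: both $\X \mapsto \int \langle \X, \ud\M\rangle$ and $\X \mapsto \int\langle \A^*\X, \ud\W_Q\rangle$ are isometries into $\mathcal{M}^2(0,T;\mathbb{R})$ for this norm, so the equality survives in the limit. Alternatively, one may directly invoke the associativity of Hilbert-space stochastic integration from \cite{DaPratoZabczyk92}.

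The main obstacle is the bookkeeping around the integrability: one must check that \eqref{EChainRule11} is precisely the quadratic-variation norm of $\X$ relative to $\M$, so that the two integrals live in the same $L^2$-space and the approximation matches simultaneously on both sides, and that the density of elementary integrands holds in this weighted norm. The interchange of the fixed functional $\langle \xi_i,\cdot\,\rangle$ with the It\^o integral is conceptually the heart of the argument but is routine; the delicate point is the joint $L^2$-approximation on the two sides together with a localization that controls all quantities almost surely rather than merely in expectation.
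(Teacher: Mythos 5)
Your proposal is correct and is in substance the same as the paper's proof, which consists of a single citation to Section 4.7 of \cite{DaPratoZabczyk92}: the reduction of the right-hand side to a real It\^o integral with $U$-valued integrand $\A^*\X$, the identification of \eqref{EChainRule11} with the sharp quadratic-variation norm $\int_0^T\langle \X(r), \ud[\M]^{cl}(r)\,\X(r)\rangle$, and the elementary-integrand/isometry/localization scheme you outline are exactly the content of that reference. In particular your closing alternative --- directly invoking the associativity of the Hilbert-space stochastic integral from \cite{DaPratoZabczyk92} --- is literally the paper's entire proof.
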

\begin{proof}
See Section 4.7 of   \cite{DaPratoZabczyk92}.
\end{proof}

We recall in the following proposition some significant properties of the stochastic integral with respect to local martingales.
\begin{Proposition} \label{PChainRule}
Let $\A$, $\M$ and $N$ as in Lemma \ref{lm:perPChainRule}. 
\begin{enumerate}
\item[(i)] $\N(t) := \int_0^t \X(r) d\M(r)$ is a well-defined $(\mathscr{F}_t)$-local martingale.
\item[(ii)] Let $\K$ be an   $(\mathscr{F}_t)$-predictable process such that $\K \X$ fulfills \eqref{EChainRule}. Then the It\^o-type stochastic integral  $\int_0^t \K d \N$ for $t \in [0,T]$ is well-defined and it equals $\int_0^t \K \X d \M$. 
\item[(iii)] If $\M$ is a $Q$-Wiener process $W_Q$, then, whenever $\X$ is such that 
\begin{equation} \label{EAAA}
\int_0^T Tr \left [ \left ( \X(r) Q^{1/2} \right ) \left (\X(r) Q^{1/2} \right)^* \right ] \ud r < +\infty \ {\rm a.s.},
\end{equation}
then $\N(t) = \int_0^t \X(r) d\W_Q(r)$ is a local martingale
and $[N]^{\R, cl}(t) = \int_0^t \left ( \X(r) Q^{1/2} \right ) \left (\X(r) Q^{1/2} \right)^* \ud r$.
\item[(iv)] If in (iii), the expectation of the
quantity \eqref{EAAA} is finite, 
then $\N(t) = \int_0^t \X(r) d\W_Q(r)$ is a square integrable continuous martingale. 
\item[(v)] If $\M$ is defined as in \eqref{AA} and $\X$ 
fulfills \eqref{EChainRule11}, then $\M$ is a real local martingale.
If moreover,  the expectation of \eqref{EChainRule11} is finite, then 
$N$, defined in (\ref{AA1}), is a square integrable martingale.
\end{enumerate}
\end{Proposition}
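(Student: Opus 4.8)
Since the statement merely compiles standard properties of the Hilbert-space It\^o integral, the plan is to reduce each item to the construction recalled in Section \ref{SecStochInt}, namely the contraction $I\colon \mathcal{I}_\M(0,T;H)\to \mathcal{M}^2(0,T;H)$ and its localized extension to the class $\mathcal{J}^2$, together with approximation by elementary integrands, rather than to rebuild the integration theory from scratch.

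For (i), Lemma \ref{lm:perPChainRule} already gives that $\M$ is a genuine $H$-valued local martingale whose increasing process equals $[\M]^{\R,cl}(\cdot)=\int_0^\cdot {\rm Tr}\left[\A(r)Q^{1/2}(\A(r)Q^{1/2})^*\right]\ud r$. First I would check that the integrability hypotheses inherited from the lemma place the integrand in $\mathcal{J}^2$ relative to $\M$; the contraction property of $I$, after localization along stopping times of the type $\tau_n$ used in the proof of Theorem \ref{th:integral-forrward=ito-martingale}, then yields that the integral is well-defined and is a local martingale.

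For (ii) I would prove the chain rule first for an elementary integrand $\K$, with $\M\in\mathcal{M}^2$ and the inner integrand in $\mathcal{I}_\M$: there both $\int_0^\cdot \K\,\ud\N$ and $\int_0^\cdot \K\X\,\ud\M$ collapse to the same finite sum, so the identity is immediate from the definition of the integral on step processes. I would then pass to a general $\K$ by density, using that the increasing process of $\N$ is dominated by $\int_0^\cdot \|\X\|^2\,\ud[\M]^{\R,cl}$: this is exactly what makes the assumption ``$\K\X$ fulfills \eqref{EChainRule}'' equivalent to well-posedness of $\int_0^\cdot \K\,\ud\N$ and what forces the two approximating sequences to share the same limit; a final localization removes the square-integrability assumption. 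Items (iii) and (iv) specialize $\M=\W_Q$, whose classical tensor covariation is $t\mapsto tQ$; the It\^o isometry (cf. Lemma \ref{lm:Prop3}) then gives both well-posedness of $\int_0^\cdot \X\,\ud\W_Q$ under \eqref{EAAA} and the announced expression for its quadratic variation, while if \eqref{EAAA} holds in expectation the integrand belongs to $\mathcal{I}_{\W_Q}(0,T;H)$, so the contraction property places the integral in $\mathcal{M}^2(0,T;H)$, i.e.\ it is a square integrable continuous martingale.

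Finally, (v) is the combination of Lemma \ref{lm:perPChainRule} and (i) for the local-martingale assertion, applied to the integrand $u\mapsto \langle\X(r),\A(r)u\rangle$, and of (iv) for the square-integrability, since finiteness of the expectation of \eqref{EChainRule11} is precisely finiteness of the Hilbert--Schmidt norm of that integrand, placing it in $\mathcal{I}_{\W_Q}(0,T;\R)$. I expect the only genuinely delicate point to be the identification of the quadratic variation in (iii): the rest is bookkeeping of the integrability conditions and routine localization, whereas matching the abstract definition of $[\,\cdot\,]^{\R,cl}$ as the Doob--Meyer compensator of the squared norm with the explicit Hilbert--Schmidt expression requires verifying that the difference of the two is a local martingale, which is where the precise form of the It\^o isometry for $\W_Q$ enters.
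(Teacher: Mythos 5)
Your proposal is correct in outline, but it takes a genuinely different route from the paper, because the paper does not reconstruct any of these arguments: its proof is pure citation --- (i) is \cite{KrylovRozovskii07}, Theorem 2.14; (ii) is the proof of Proposition 2.2, Section 2.4, of \cite{MetivierPellaumail80}; (iii) and (iv) are \cite{DaPratoZabczyk92}, Theorem 4.12, Section 4.4 --- and (v) is then deduced from (iii), (iv) and Lemma \ref{lm:perPChainRule}. What you sketch (elementary integrands plus density, the contraction property, localization as in Theorem \ref{th:integral-forrward=ito-martingale}, and the It\^o isometry for the Wiener case) is essentially a reconstruction of what those references prove, so your route buys self-containedness at the cost of redoing standard theory, while the paper's buys brevity. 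Two remarks. First, in (i) you attribute to Lemma \ref{lm:perPChainRule} the identity $[\M]^{\R,cl}(\cdot)=\int_0^\cdot {\rm Tr}\left[\A(r)Q^{1/2}(\A(r)Q^{1/2})^*\right]\ud r$; the lemma asserts only the well-definedness of $N$ and the chain-rule identity, and that formula is precisely item (iii) applied with integrand $\A$ (i.e.\ \cite{DaPratoZabczyk92}, Theorem 4.12). The logically clean order in your scheme is therefore (iii)--(iv) first (integrals against $\W_Q$), then (i), (ii), (v); this also matches the dependency structure implicit in the paper's citations, and note that for (v) the local-martingale part is cleanest through (iii) (as the paper does), since \eqref{EChainRule11} is exactly \eqref{EAAA} for the integrand $u\mapsto \langle \X(r),\A(r)u\rangle$, which is how you in fact use it despite invoking (i).

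The one real soft spot is the density step in (ii). You claim that ``$\K\X$ fulfills \eqref{EChainRule}'' is \emph{equivalent} to well-posedness of $\int_0^\cdot \K\,\ud\N$ and that the limit passage closes via the domination $\ud[\N]^{\R,cl}\le\|\X\|^2_{\mathcal{L}(U;H)}\,\ud[\M]^{\R,cl}$. That domination only bounds $\int_0^T\|\K(r)\|^2\,\ud[\N]^{\R,cl}(r)$ by $\int_0^T\|\K(r)\|^2\,\|\X(r)\|^2\,\ud[\M]^{\R,cl}(r)$, and this last quantity is \emph{not} controlled by the hypothesis $\int_0^T\|\K(r)\X(r)\|^2\,\ud[\M]^{\R,cl}(r)<\infty$, because $\|\K\X\|\le\|\K\|\,\|\X\|$ goes the wrong way. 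To pass from elementary to general $\K$ under the stated hypothesis one needs the finer integrability notion in which the condition for integrating $\K$ against $\N$ is expressed through the covariance operator of $\N$ (so that it reduces exactly to a condition on $\K\X$ relative to $\M$); this is precisely the framework of \cite{MetivierPellaumail80} that the paper invokes for (ii). With that adjustment your argument is complete; as written, the equivalence claim and the closing of the density argument do not follow from the crude operator-norm conditions alone.
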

\begin{proof}
For (i) see \cite{KrylovRozovskii07} Theorem 2.14 page 14-15. For (ii) see \cite{MetivierPellaumail80}, proof of Proposition 2.2 Section 2.4. (iii) and (iv) are contained in \cite{DaPratoZabczyk92} Theorem 4.12 Section 4.4. (v) is a consequence of (iii) and (iv) and of Lemma \ref{lm:perPChainRule}.
\end{proof}

We recall the following fact that concerns the classical tensor covariation.
\begin{Lemma}
\label{lm:lemmaDPZ-covariation}
Let $\Psi\colon ([0,T] \times \Omega, \mathcal{P}) \to \mathcal{L}_2(U_0,H)$ be a strongly measurable process satisfying 
\[
\int_0^T \left \| \Psi(r) \right \|^2_{\mathcal{L}_2(U_0,H)} \ud r < +\infty \qquad a.s. 
\]
Consider the local martingale $\M(t):= \int_0^t \Psi(r) \ud \W_Q(r)$. Then $[\M, \M]^{cl}(t) = \int_0^t g(r) \ud r$, where $g(r)$ is the element of $H\hat\otimes_\pi H$ associated with the nuclear operator $G_g(r):=\Big ( \Psi(r) Q ^{1/2} \Big ) \Big ( \Psi(r) Q^{1/2} \Big )^*$.
\end{Lemma}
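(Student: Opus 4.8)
The plan is to exploit the characterization (\ref{eq:propertyP}) of the classical tensor covariation, which reduces the identity $[\M,\M]^{cl}(t) = \int_0^t g(r)\ud r$ to a family of scalar computations. Since, by Lemma \ref{lemmaTensor}, $H^*\hat\otimes_\pi H^*$ is weak-* sequentially dense in $(H\hat\otimes_\pi H)^*$, and both sides of the claimed equality are bounded variation $H\hat\otimes_\pi H$-valued processes, it suffices to test against elements of the form $h^*\otimes k^*$, $h,k\in H$, viewed as elements of $(H\hat\otimes_\pi H)^*$ via Lemma \ref{lm:che-era-remark}. Concretely I would establish, for every $h,k\in H$ and $t\in[0,T]$, the scalar identity
\[
[\M,\M]^{cl}(t)(h^*\otimes k^*) = \left(\int_0^t g(r)\ud r\right)(h^*\otimes k^*).
\]

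First I would settle the well-posedness of the right-hand side. Since $\Psi(r)\in\mathcal{L}_2(U_0,H)$, the operator $\Psi(r)Q^{1/2}$ belongs to $\mathcal{L}_2(U,H)$ with $\|\Psi(r)Q^{1/2}\|_{\mathcal{L}_2(U,H)} = \|\Psi(r)\|_{\mathcal{L}_2(U_0,H)}$; hence $G_g(r)=(\Psi(r)Q^{1/2})(\Psi(r)Q^{1/2})^*$, being a product of two Hilbert-Schmidt operators, is a non-negative self-adjoint nuclear operator, so the associated tensor $g(r)\in H\hat\otimes_\pi H$ is well-defined and $\|g(r)\|_{H\hat\otimes_\pi H}\le\|\Psi(r)\|^2_{\mathcal{L}_2(U_0,H)}$. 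The integrability hypothesis then makes $\int_0^t g(r)\ud r$ a genuine Bochner integral in $H\hat\otimes_\pi H\subseteq (H\hat\otimes_\pi H)^{**}$, and the pairing convention identifying nuclear operators with tensors gives
\[
\left(\int_0^t g(r)\ud r\right)(h^*\otimes k^*) = \int_0^t\langle G_g(r)h,k\rangle_H\ud r.
\]

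For the left-hand side I would use (\ref{eq:propertyP}) to write $[\M,\M]^{cl}(t)(h^*\otimes k^*) = [\langle\M,h\rangle,\langle\M,k\rangle](t)$, the classical covariation of two real local martingales, which coincides with the one defined via regularization. By Lemma \ref{lm:perPChainRule}, $\langle\M,h\pm k\rangle$ is the scalar stochastic integral $\int_0^\cdot\langle h\pm k,\Psi(r)\ud\W_Q(r)\rangle$, whose integrand $u\mapsto\langle h\pm k,\Psi(r)u\rangle$ lies in $\mathcal{L}(U,\mathbb{R})$. Applying Proposition \ref{PChainRule}(iii) to these integrands yields the scalar quadratic variations $\int_0^t|(\Psi(r)Q^{1/2})^*(h\pm k)|^2_U\ud r$, so that polarization gives
\[
[\langle\M,h\rangle,\langle\M,k\rangle](t) = \int_0^t\langle(\Psi(r)Q^{1/2})^*h,(\Psi(r)Q^{1/2})^*k\rangle_U\ud r = \int_0^t\langle G_g(r)h,k\rangle_H\ud r.
\]
Comparing with the right-hand side computed above concludes the proof.

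The main obstacle I anticipate is bookkeeping rather than conceptual: one must keep the inner products of $U$, $U_0=Q^{1/2}(U)$ and $H$ straight, correctly identify the $\mathcal{L}(U,\mathbb{R})$-integrands and their adjoints so that $(\Psi(r)Q^{1/2})^*h$ emerges from Proposition \ref{PChainRule}(iii), and match the tensor/nuclear-operator pairing with the convention underlying (\ref{eq:propertyP}). The Riesz identification $H\cong H^*$ adopted throughout this section removes most of the remaining notational friction, and the passage from quadratic variation to covariation is routine once polarization is invoked together with the fact that for real local martingales the regularization covariation agrees with the classical one.
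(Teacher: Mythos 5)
Your proof is correct, but it is worth noting that the paper itself gives no argument at all: its proof of Lemma \ref{lm:lemmaDPZ-covariation} is the single line ``See \cite{DaPratoZabczyk92} Section 4.7'', so what you have written is a self-contained reconstruction of what that reference contains. Your route --- test both sides against elementary tensors $h^*\otimes k^*$, reduce to real-valued brackets via the characterization \eqref{eq:propertyP}, compute $[\langle\M,h\rangle,\langle\M,k\rangle]$ from Lemma \ref{lm:perPChainRule} and Proposition \ref{PChainRule}(iii) plus polarization, and pass from elementary tensors to the full dual via Lemma \ref{lemmaTensor} together with norm continuity of the embedding of $H^*\hat\otimes_\pi H^*$ --- is exactly the scheme the authors use elsewhere in the paper (the proof of Lemma \ref{lm:L411} runs on the same density/testing argument), and the scalar computation is the standard Da Prato--Zabczyk one. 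Two small remarks. First, the density step is strictly speaking redundant: Remark \ref{rm:propertyP}(i) already asserts that property \eqref{eq:propertyP} \emph{characterizes} the classical tensor covariation, so agreement on elementary tensors is by itself conclusive once you note that $\int_0^\cdot g(r)\ud r$ is adapted, continuous, of bounded variation and vanishes at zero; your alternative justification (both sides lie in the predual $H\hat\otimes_\pi H$, hence are weak-* sequentially continuous on the dual) is nevertheless sound. Second, your well-posedness bound $\|g(r)\|_{H\hat\otimes_\pi H}\le\|\Psi(r)\|^2_{\mathcal{L}_2(U_0,H)}$ is in fact an equality, since non-negativity of $G_g(r)$ gives $\|G_g(r)\|_{\mathcal{L}_1(H)}={\rm Tr}\,G_g(r)=\|\Psi(r)Q^{1/2}\|^2_{\mathcal{L}_2(U,H)}$. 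What the citation buys the authors is brevity; what your version buys is an explicit verification that the tensor/nuclear-operator identification of the Appendix matches the pairing convention underlying \eqref{eq:propertyP}, which is precisely the bookkeeping the one-line citation sweeps under the rug.
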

\begin{proof}
See \cite{DaPratoZabczyk92} Section 4.7.
\end{proof}

We introduce now the class of convolution type processes that includes mild solutions of (\ref{eq:SPDEIntro}). We denote   by  $A\colon D(A) \subseteq H \to H$ the generator of the $C_0$-semigroup $e^{tA}$ (for $t\geq 0$) on $H$. The reader may consult   for instance \cite{BDDM07} Part II, Chapter 1 
for basic properties of $C_0$-semigroups.
$A^*$ denotes the adjoint of $A$.
 $D(A)$ and $D(A^*)$ are Hilbert spaces 
 when endowed with the graph norm: for any $x\in D(A^*)$ (respectively $x\in D(A)$), 
$|x|_{D(A^*)}^2 := |x|^2 + |A^*x|^2$ (respectively $|x|_{D(A)}^2 := |x|^2 + |Ax|^2$).

Observe that, as a consequence of the identification of $H$ and $H^*$, $A^*$ is the generator of a $C_0$-semigroup on $H$ and $D(A^*)$ is continuously embedded in $H$. 

Let $b$ be a predictable process with values in $H$ and $\sigma$  be a predictable process  with values in $\mathcal{L}_2(U_0, H)$ such that
\begin{equation}
\label{eq:hp-per-existence-mild}
\mathbb{P} \left [ \int_0^T |b(t)| + \|\sigma(t)\|^2_{\mathcal{L}_2(U_0, H)} \ud t < +\infty \right ] =1.
\end{equation}
We are interested in the process
\begin{equation}
\label{eq:SPDE-mild}
\X(t) = e^{tA} x  + \int_0^t e^{(t-r)A} b(r) \ud r + \int_0^t e^{(t-r)A} \sigma(r) \ud \W_Q(r).
\end{equation}
In this paper, a process of this form is called \emph{convolution type process}. 
We define 
\begin{equation}
\label{eq:defY}
\Y(t):= \X(t) - \int_0^t b(r) \ud r - \int_0^t \sigma (r) \ud \W_Q(r) -x.
\end{equation}

\begin{Lemma}
\label{lm:conOndrejat}
Let $b$  (respectively $\sigma$) be a 
predictable process with values 
 in $H$  
(respectively with values  in $\mathcal{L}_2(U_0, H)$) such that 
(\ref{eq:hp-per-existence-mild}) is satisfied. Let $\X(t)$ be defined by (\ref{eq:SPDE-mild}) and 
$\Y$ defined by (\ref{eq:defY}). If $z\in D(A^*)$ we have $\left\langle \Y(t) , z \right\rangle = \int_0^t \left\langle \X(r), A^*z \right\rangle \ud r$.
\end{Lemma}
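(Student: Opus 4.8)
The plan is to substitute the explicit form \eqref{eq:SPDE-mild} of $\X$ into the definition \eqref{eq:defY} of $\Y$, so that the subtracted terms combine with the semigroup terms. Writing everything out one obtains
\[
\Y(t) = (e^{tA} - I)x + \int_0^t (e^{(t-r)A} - I) b(r)\,\ud r + \int_0^t (e^{(t-r)A} - I)\sigma(r)\,\ud\W_Q(r).
\]
The whole computation then rests on one elementary $C_0$-semigroup identity. For $z \in D(A^*)$ the map $u \mapsto e^{uA^*}z$ is differentiable with derivative $e^{uA^*}A^*z$, so $(e^{sA^*}-I)z = \int_0^s e^{uA^*}A^*z\,\ud u$; pairing in $H$ and using the identification $H=H^*$, this gives, for every $y \in H$,
\[
\langle (e^{sA}-I)y, z\rangle = \langle y, (e^{sA^*}-I)z\rangle = \int_0^s \langle e^{uA}y, A^*z\rangle\,\ud u .
\]
First I would apply this identity to each of the three terms of $\langle\Y(t),z\rangle$.

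For the first term this yields directly $\langle (e^{tA}-I)x, z\rangle = \int_0^t \langle e^{sA}x, A^*z\rangle\,\ud s$, which is the semigroup part of $\X$ paired with $A^*z$. For the drift term, applying the identity inside the $\ud r$-integral produces the double integral $\int_0^t\!\int_0^{t-r}\langle e^{uA}b(r), A^*z\rangle\,\ud u\,\ud r$; the substitution $s = r+u$ turns the inner integral into $\int_r^t\langle e^{(s-r)A}b(r),A^*z\rangle\,\ud s$, and a (deterministic) Fubini theorem over the triangle $\{0\le r\le s\le t\}$ rewrites this as $\int_0^t\langle \int_0^s e^{(s-r)A}b(r)\,\ud r, A^*z\rangle\,\ud s$, whose inner integral is exactly the drift part of $\X(s)$.

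The stochastic term is treated in the same spirit but requires more care. After moving $z$ inside the stochastic integral (using that pairing a Hilbert-valued stochastic integral with a fixed vector commutes with the integral) and applying the semigroup identity to the integrand, one obtains $\int_0^t\big(\int_r^t\langle e^{(s-r)A}\sigma(r),A^*z\rangle\,\ud s\big)\,\ud\W_Q(r)$. The crucial step is then an appeal to the stochastic Fubini theorem in the form of \cite{Leon90} (already invoked in the proof of Theorem \ref{th:integral-forrward=ito-martingale}) to exchange the $\ud s$- and $\ud\W_Q(r)$-integrations over the same triangle, yielding $\int_0^t\langle \int_0^s e^{(s-r)A}\sigma(r)\,\ud\W_Q(r), A^*z\rangle\,\ud s$, i.e. the stochastic-convolution part of $\X(s)$ paired with $A^*z$. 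Summing the three contributions reconstructs $\int_0^t\langle \X(s),A^*z\rangle\,\ud s$, which is the claim.

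The main obstacle is the rigorous justification of the stochastic Fubini step: I would have to verify the integrability hypotheses of \cite{Leon90} for the integrand $(r,s)\mapsto e^{(s-r)A}\sigma(r)$, which follows from \eqref{eq:hp-per-existence-mild} together with the local boundedness $\|e^{uA}\|\le M e^{\omega u}$ of the semigroup on $[0,T]$ and the fact that $A^*z$ is a fixed element of $H$. The same bounds ensure that every deterministic and stochastic integral appearing above, and in particular the integrand $s\mapsto\langle\X(s),A^*z\rangle$ on the right-hand side, is well-defined and almost surely finite, so that the final identity is meaningful.
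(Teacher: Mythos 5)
Your proof is correct, but it takes a genuinely different route from the paper: the paper's entire proof is a citation of Theorem 12 in \cite{Ondrejat04}, which is precisely the equivalence between mild and (analytically) weak solutions, whereas you reprove the relevant direction of that equivalence from scratch. Your three-term decomposition of $\Y$, the semigroup identity $(e^{sA^*}-I)z=\int_0^s e^{uA^*}A^*z\,\ud u$ for $z\in D(A^*)$ (legitimate here because on a Hilbert space $A^*$ generates the adjoint $C_0$-semigroup, as the paper itself notes), and the deterministic and stochastic Fubini exchanges over the triangle $\{0\le r\le s\le t\}$ are exactly the classical mechanism behind the mild--weak equivalence, so your argument is a self-contained substitute for the citation. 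What the citation buys is that Ondrej\'at's statement is already calibrated to the weak integrability assumption \eqref{eq:hp-per-existence-mild}; what your argument buys is transparency, at the price of the one step you flag but treat a bit too quickly: the stochastic Fubini theorem (in \cite{Leon90} or in Da Prato--Zabczyk form) is stated under moment conditions on the integrand, while \eqref{eq:hp-per-existence-mild} gives only almost sure finiteness of $\int_0^T\|\sigma(r)\|^2_{\mathcal{L}_2(U_0,H)}\,\ud r$. So the hypotheses do not ``follow'' directly; you must first localize with the stopping times $\tau_n:=\inf\{t:\int_0^t\|\sigma(r)\|^2_{\mathcal{L}_2(U_0,H)}\,\ud r\ge n\}$, replace $\sigma$ by $\sigma I_{[0,\tau_n]}$, apply Fubini there, and pass to the limit $n\to\infty$ on the event $\{\tau_n\ge T\}$. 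With that (routine) localization inserted, your proof is complete.
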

\begin{proof}
See \cite{Ondrejat04} Theorem 12.
\end{proof}

We want now to prove that $\Y$ has zero-$\chi$-quadratic variation for a suitable space $\chi$. We will see that the space
\begin{equation}
\label{eq:def-chi}
\bar\chi:= D(A^*)\hat\otimes_\pi D(A^*)
\end{equation}
does the job. We set $\bar\nu_0:= D(A^*)$ which is clearly continuously embedded into $H$. 
By Lemma \ref{lm:era52}, $\bar \chi$ is a Chi-subspace of 
 $(H\hat\otimes_\pi H)^*$.

\begin{Proposition}
\label{pr:Y-zero-chi-quad-var}
The process $\Y$ has zero $\bar\chi$-quadratic variation.
\end{Proposition}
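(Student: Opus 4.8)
The plan is to show that the $\bar\chi$-quadratic variation of $\Y$ vanishes by appealing to Lemma \ref{RDGR}(ii), that is, to prove that the scalar quantity $A(\varepsilon)$ of \eqref{Aepsilon}, computed with $\chi = \bar\chi = D(A^*)\hat\otimes_\pi D(A^*)$ and $\X = \Y = \Y$, converges to zero in probability as $\varepsilon \to 0$. Since $\bar\chi = \bar\nu_0 \hat\otimes_\pi \bar\nu_0$ with $\bar\nu_0 = D(A^*)$, an argument analogous to \eqref{eq:step-da-richiamare} in the proof of Proposition \ref{PFQVZ} shows that for $a, b \in H$ one has $|J(a\otimes b)|_{\bar\chi^*} = |a|_{\bar\nu_0^*}|b|_{\bar\nu_0^*}$, so that
\[
A(\varepsilon) = \int_0^T \frac{|\Y(r+\varepsilon) - \Y(r)|_{\bar\nu_0^*}^2}{\varepsilon}\, \ud r .
\]
Hence everything reduces to estimating the increments of $\Y$ measured in the norm of $\bar\nu_0^* = D(A^*)^*$.

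The key structural input is Lemma \ref{lm:conOndrejat}, which gives $\langle \Y(t), z\rangle = \int_0^t \langle \X(r), A^*z\rangle \ud r$ for all $z \in D(A^*)$. First I would use this to express the increment $\langle \Y(r+\varepsilon) - \Y(r), z\rangle = \int_r^{r+\varepsilon} \langle \X(u), A^*z\rangle \ud u$, and then take the supremum over $z$ in the unit ball of $D(A^*)$ to bound $|\Y(r+\varepsilon) - \Y(r)|_{\bar\nu_0^*}$. The natural estimate is
\[
|\Y(r+\varepsilon) - \Y(r)|_{\bar\nu_0^*} = \sup_{|z|_{D(A^*)}\le 1} \left| \int_r^{r+\varepsilon}\langle \X(u), A^*z\rangle \ud u \right| \le \int_r^{r+\varepsilon} |\X(u)|_H \,\ud u ,
\]
using that $|A^*z|_H \le |z|_{D(A^*)}$, so that $z \mapsto \langle \X(u), A^*z\rangle$ is bounded by $|\X(u)|_H$ on the unit ball. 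This is where the choice $\bar\nu_0 = D(A^*)$ is essential: the graph norm tames the unboundedness of $A^*$, and the duality is exactly what converts the weak (tested against $D(A^*)$) characterization of $\Y$ into a genuine norm estimate in $\bar\nu_0^*$.

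With this pointwise bound in hand, I would plug it into $A(\varepsilon)$ and obtain
\[
A(\varepsilon) \le \frac{1}{\varepsilon}\int_0^T \left( \int_r^{r+\varepsilon} |\X(u)|_H \,\ud u \right)^2 \ud r \le \varepsilon \int_0^T \left( \frac{1}{\varepsilon}\int_r^{r+\varepsilon}|\X(u)|_H^2 \,\ud u \right) \ud r ,
\]
by Cauchy--Schwarz applied to the inner integral. The trajectories of $\X$ are a.s. continuous in $H$, hence a.s. bounded on $[0,T]$ (extending as in Notation \ref{Not00}), so $\int_0^T |\X(u)|_H^2\,\ud u < \infty$ a.s.; by a Fubini exchange and the Lebesgue differentiation theorem the inner average stays controlled and the factor $\varepsilon$ in front forces $A(\varepsilon) \to 0$ a.s., and so in probability. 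I expect the main technical obstacle to be the rigorous justification of the convergence of the averaged term rather than the algebra: one must argue that $\frac{1}{\varepsilon}\int_0^T\int_r^{r+\varepsilon}|\X(u)|_H^2\,\ud u\,\ud r$ remains a.s. bounded (indeed it converges to $\int_0^T |\X(u)|_H^2\,\ud u$), which follows cleanly once the extension convention of Notation \ref{Not00} and the a.s. continuity of $\X$ are invoked. Once $A(\varepsilon)\to 0$ in probability is established, Lemma \ref{RDGR}(ii) immediately gives that $\Y$ admits a $\bar\chi$-quadratic variation equal to zero, which is the assertion.
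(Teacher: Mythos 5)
Your proof is correct and takes essentially the same route as the paper's: reduction to showing $A(\varepsilon)\to 0$ in probability (Lemma \ref{RDGR}(ii), i.e. Lemma 3.18 of the Di Girolami--Russo reference), identification of the $\bar\chi^*$-norm of the tensor-square increment with $|\Y(r+\varepsilon)-\Y(r)|_{\bar\nu_0^*}^2$, use of Lemma \ref{lm:conOndrejat} together with $|A^*z|\le |z|_{D(A^*)}$ to bound the increments by $\int_r^{r+\varepsilon}|\X(u)|_H\,\ud u$, and a final $O(\varepsilon)$ estimate. The only (immaterial) difference is in the last step, where the paper bounds the inner integral crudely by $\varepsilon\sup_{\xi\in[0,T]}|\X(\xi)|$ rather than via your Cauchy--Schwarz and Fubini argument.
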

\begin{proof}
Observe that, thanks to Lemma 3.18 in \cite{DiGirolamiRusso11} it is enough
 to show that 
\[
I(\epsilon):= \frac{1}{\epsilon}\int_0^T |(\Y(r+\epsilon) - \Y(r))^{\otimes 2}|_{\bar\chi^*}
 \ud r \xrightarrow{\epsilon \to 0}0, \qquad \text{in probability}.
\]
In fact, identifying $\bar\chi^*$ with ${\mathcal Bi}(\bar\nu_0, \bar\nu_0 ; \mathbb{R})$
(see after Notation \ref{Not1})
we get
\begin{multline}
I(\epsilon) = \frac{1}{\epsilon} \int_0^T \sup_{|\phi|_{\bar{\nu}_0},\,
 |\psi|_{\bar{\nu}_0} \leq 1} \left | \left\langle (\Y(r+\epsilon) - \Y(r)), \phi \right \rangle \left\langle (\Y(r+\epsilon) - \Y(r)), \psi \right \rangle \right | \ud r \\
\leq 
\frac{1}{\epsilon} \int_0^T \sup_{|\phi|_{\bar{\nu_0}}, \, |\psi|_{\bar{\nu}_0} \leq 1} \left \{ \left | \int_r^{r+\epsilon} \left\langle (\X(\xi), A^*\phi \right \rangle \ud \xi \right |  
\left | \int_r^{r+\epsilon} \left\langle (\X(\xi), A^*\psi \right \rangle \ud \xi \right | \right \} \ud r,
\end{multline}
where we have used Lemma \ref{lm:conOndrejat}. This  is smaller than $\frac{1}{\epsilon} \int_0^T  \left | \int_r^{r+\epsilon} |\X(\xi)| \ud \xi \right |^2 \ud r
\leq \epsilon \sup_{\xi\in [0,T]} |\X(\xi)|^2$, which converges to zero almost surely.
\end{proof}

\begin{Corollary}
\label{cor:X-barchi-Dirichlet}
The process $\X$ is a $\bar \chi$-Dirichlet process.
 Moreover it
 is also a $\bar\chi$ finite quadratic variation process
 and a $\bar\nu_0\hat\otimes_\pi \mathbb{R}$-weak-Dirichlet process.
\end{Corollary}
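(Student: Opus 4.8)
The plan is to exhibit the canonical decomposition of $\X$ and then reduce the three assertions to the already-established Corollary \ref{Cor:codi}. First I would set $\M(t) := \int_0^t \sigma(r)\ud\W_Q(r)$, which is a continuous local martingale by Proposition \ref{PChainRule}(iii), since $\sigma$ is predictable, $\mathcal{L}_2(U_0,H)$-valued and satisfies \eqref{eq:hp-per-existence-mild}, and $\A(t) := \X(t) - \M(t)$. Comparing \eqref{eq:SPDE-mild} with the definition \eqref{eq:defY} of $\Y$, one sees that $\A(t) = \Y(t) + \int_0^t b(r)\ud r$; since $\Y(0)=\X(0)-x=0$ this gives $\A(0)=0$, and $\A$ is continuous and adapted because $\X$, $\M$ and $t\mapsto\int_0^t b(r)\ud r$ are.

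The core step is to show that $\A$ has zero $\bar\chi$-quadratic variation, so that $\X=\M+\A$ is a $\bar\chi$-Dirichlet process in the sense of Definition \ref{def:chi-Dirichlet-process}. By Lemma \ref{RDGR}(ii) it suffices to prove that $A^{\A}(\epsilon):=\frac{1}{\epsilon}\int_0^T|(\A(r+\epsilon)-\A(r))^{\otimes 2}|_{\bar\chi^*}\ud r\to 0$ in probability. Writing $V(t):=\int_0^t b(r)\ud r$ and using the identification $|a^{\otimes 2}|_{\bar\chi^*}=|a|_{\bar\nu_0^*}^2$ already exploited in the proof of Proposition \ref{pr:Y-zero-chi-quad-var}, the elementary bound $|a+b|_{\bar\nu_0^*}^2\le 2|a|_{\bar\nu_0^*}^2+2|b|_{\bar\nu_0^*}^2$ yields $A^{\A}(\epsilon)\le 2A^{\Y}(\epsilon)+2A^{V}(\epsilon)$, where $A^{\Y}$ and $A^{V}$ are defined analogously. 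The term $A^{\Y}(\epsilon)$ tends to zero by Proposition \ref{pr:Y-zero-chi-quad-var}. For $A^{V}(\epsilon)$, since $\bar\nu_0=D(A^*)$ is continuously embedded in $H\equiv H^*$ we have $|\cdot|_{\bar\nu_0^*}\le C|\cdot|_H$, so everything reduces to the scalar statement $\frac{1}{\epsilon}\int_0^T|V(r+\epsilon)-V(r)|_H^2\ud r\to 0$.

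I would treat this last point with the classical bounded-variation argument: $V$ is a continuous bounded variation process (its total variation is $\int_0^T|b(r)|\ud r<\infty$ a.s.), so bounding one increment factor by the uniform modulus of continuity $\omega(\epsilon):=\sup_{|u-v|\le\epsilon}|V(u)-V(v)|_H\to 0$ and the other by the total variation measure $\mu$ of $V$ restricted to $[r,r+\epsilon]$, then applying Fubini, gives $\frac{1}{\epsilon}\int_0^T|V(r+\epsilon)-V(r)|_H^2\ud r\le \omega(\epsilon)\,\mu([0,T])\to 0$ a.s. (the extension convention of Notation \ref{Not00} keeps $\mu$ supported on $[0,T]$). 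This settles that $\A$ has zero $\bar\chi$-quadratic variation and hence that $\X$ is a $\bar\chi$-Dirichlet process.

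Finally, the two remaining assertions follow by citation. The hypotheses of Proposition \ref{pr:dirichlet-weak-dirichlet} hold with $\chi_0=\bar\nu_0=D(A^*)$ continuously embedded in $H\equiv H^*$ and $\bar\chi=\bar\nu_0\hat\otimes_\pi\bar\nu_0$, so Corollary \ref{Cor:codi} applies to the $\bar\chi$-Dirichlet process $\X$: part (iii) gives that $\X$ is a $\bar\chi$-finite-quadratic-variation process, while part (i), taken with $H_1=\mathbb{R}$ so that $\nu=\chi_0\hat\otimes_\pi H_1^*=\bar\nu_0\hat\otimes_\pi\mathbb{R}$, gives that $\X$ is a $\bar\nu_0\hat\otimes_\pi\mathbb{R}$-weak-Dirichlet process. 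The main obstacle is really the self-contained verification of the third paragraph, namely that the absolutely continuous drift term $V$ has zero quadratic variation under only the $L^1$-type integrability \eqref{eq:hp-per-existence-mild} of $b$ rather than an $L^2$ bound, together with the bookkeeping ensuring that $\A$ is genuinely continuous so that Definition \ref{def:chi-Dirichlet-process} applies.
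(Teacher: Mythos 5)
Your proposal follows the paper's own skeleton: the same decomposition into a stochastic-integral martingale part plus $\Y+\V$ with $\V(t)=\int_0^t b(r)\,\ud r$, and the same final reduction of the second and third assertions to Corollary \ref{Cor:codi} via Proposition \ref{pr:dirichlet-weak-dirichlet} with $\chi_0=\bar\nu_0$, $H_1=\mathbb{R}$. The middle step, however, is argued differently. The paper gets the zero $\bar\chi$-quadratic variation of $\A=\V+\Y$ by bilinearity of the $\bar\chi$-covariation: $[\Y,\Y]_{\bar\chi}=0$ is Proposition \ref{pr:Y-zero-chi-quad-var}, while $[\V,\V]_{\bar\chi}$ and the cross terms $[\V,\Y]_{\bar\chi}$, $[\Y,\V]_{\bar\chi}$ vanish by Proposition \ref{PFQVZ} (a continuous bounded variation process has zero scalar quadratic variation) combined with Lemma \ref{lm:RChiGlob}(i). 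You instead apply Lemma \ref{RDGR}(ii) directly to $\A$, using $|(a+b)^{\otimes 2}|_{\bar\chi^*}=|a+b|^2_{\bar\nu_0^*}\le 2|a|^2_{\bar\nu_0^*}+2|b|^2_{\bar\nu_0^*}$ and a hands-on modulus-of-continuity/total-variation estimate for the drift term (which is correct, including the role of Notation \ref{Not00}). This is more self-contained, avoids Proposition \ref{PFQVZ} altogether, and handles the cross terms for free; the paper's route is shorter because it reuses packaged machinery. One citation subtlety: your claim that $A^{\Y}(\epsilon)\to 0$ uses what the \emph{proof} of Proposition \ref{pr:Y-zero-chi-quad-var} establishes (convergence of $\frac1\epsilon\int_0^T|(\Y(r+\epsilon)-\Y(r))^{\otimes 2}|_{\bar\chi^*}\,\ud r$ to zero), not merely its statement, since zero $\bar\chi$-quadratic variation in the sense of Definition \ref{def:covariation} does not by itself yield convergence of that $\bar\chi^*$-norm integral.

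There is one slip to repair. With $\M(t):=\int_0^t\sigma(r)\,\ud\W_Q(r)$ you get $\A(t)=\X(t)-\M(t)=\Y(t)+\int_0^t b(r)\,\ud r+x$, not $\Y(t)+\int_0^t b(r)\,\ud r$; hence $\A(0)=x\neq 0$ in general, and Definition \ref{def:chi-Dirichlet-process} requires $\A(0)=0$. The fix is exactly the paper's choice: absorb the deterministic constant into the martingale part, $\M(t):=x+\int_0^t\sigma(r)\,\ud\W_Q(r)$, which is still a continuous local martingale. Since constants do not affect increments, none of your estimates change; with this correction the proof is sound.
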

\begin{proof}
For $ t \in [0,T]$, we have $\X(t) = \M(t) + \A(t)$, where $\M(t) := x + \int_0^t \sigma (r) \ud \W_Q(r)$, $\A(t) := \V(t) + \Y(t)$ and $\V(t) = \int_0^t b(r) dr$. $\M$ is a local martingale by Proposition \ref{PChainRule} (iii) and $\V$ is a bounded variation process.  By  Proposition \ref{PFQVZ} and Lemma \ref{lm:RChiGlob}(i), we get $[\V,\V]_{\bar{\chi}} =  [\V,\Y]_{\bar{\chi}} = [\Y,\V]_{\bar{\chi}} = 0.$  By Proposition \ref{pr:Y-zero-chi-quad-var} and the bilinearity of  the $\bar{\chi}$-covariation, it yields that $\A$ has a zero $\bar{\chi}$-quadratic variation and so $\X$ is a $\bar{\chi}$-Dirichlet process. The second part of the statement is a consequence of Corollary  \ref{Cor:codi}.
\end{proof}
\begin{Remark} \label{RDirichlet}
Corollary \ref{cor:X-barchi-Dirichlet} allows to apply
 the Fukushima type Theorem \ref{th:prop6}
to  expand a process of the form $F(t,\X(t))$, where $F$ is a $ C^{0,1}$-function and  $\X$  is a convolution type process, so for instance the 
solution of a  stochastic differential equation of evolution type. This looks as a It\^o generalized chain rule
for non-smooth function $F$ and it was applied  in \cite{FabbriRusso-preprint} to obtain the stochastic control 
verification Theorem  6.11.
\end{Remark}


In the sequel we will denote by $UC([0,T]\times H; D(A^*))$ ($D(A^*)$ is equipped with the graph norm) the $F$-space of the functions $G: [0,T] \times H \rightarrow  D(A^*)$ 
 which are uniformly continuous on sets $[0,T]\times D$ for any closed ball $D$ of $H$,
 equipped with the topology of the uniform 
 convergence on closed balls.

\smallskip

The theorem below  generalizes for some aspects the It\^o formula
of \cite{DiGirolamiRusso11}, i.e. their Theorem 5.2, 
to the case when the second derivatives do not necessarily 
belong to the Chi-subspace $\chi$.
\begin{Theorem}
\label{th:exlmIto}
Let $F\colon [0,T] \times H \to \mathbb{R}$ of class $C^{1,2}$. Suppose that $(t,x)\mapsto \partial_xF(t,x)$ belongs to $ UC([0,T]\times H; D(A^*))$. Let $\X$ be an  $H$-valued process process
admitting a $\bar\chi$-quadratic variation.
We suppose the following.
\begin{itemize}
\item[(i)] There exists a (c\`adl\`ag) bounded variation process $C\colon [0,T] \times \Omega \to (H\hat\otimes_\pi H)$ such that, for all $t$ in $[0,T]$ and $\phi\in \bar\chi$, 
\[
C(t,\cdot)(\phi) = [\X, \X]_{\bar\chi}(\phi)(t, \cdot) \qquad a.s.
\]
\item[(ii)] For every continuous function $\Gamma\colon [0,T] \times H \to D(A^*)$ the following integral exists:
\begin{equation} \label{ForwI}
\int_0^t \left\langle \Gamma(r,\X(r)), \ud ^- \X(r) \right\rangle.
\end{equation}
\end{itemize}
Then, for any $t\in [0,T]$,
\begin{multline}
\label{eq:Ito}
F(t,\X(t)) = F(0, \X(0)) + \int_0^t \left\langle \partial_rF(r, \X(r)), \ud^- \X(r) \right\rangle \\
+\frac12 \int_0^t  {}_{(H \hat\otimes_{\pi} H)^*}  
\left \langle \partial_{xx}^2 F(r, \X(r)) , \ud C(r) 
\right\rangle_{H \hat\otimes_{\pi} H} 
  + \int_0^t \partial_r F(r, \X(r)) \ud r, \quad a.s.
\end{multline}
\end{Theorem}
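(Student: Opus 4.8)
The plan is to reproduce the one-dimensional It\^o-formula-via-regularization scheme in the present Hilbert/tensor setting. Since $s\mapsto F(s,\X(s))$ is continuous, one has trivially
\[
F(t,\X(t))-F(0,\X(0))=\lim_{\epsilon\to0^+}\frac1\epsilon\int_0^t\big(F(s+\epsilon,\X(s+\epsilon))-F(s,\X(s))\big)\,\ud s,
\]
the two boundary contributions $\frac1\epsilon\int_t^{t+\epsilon}$ and $\frac1\epsilon\int_0^\epsilon$ converging by continuity. The first step is to split the integrand as the sum of a \emph{time increment} $F(s+\epsilon,\X(s+\epsilon))-F(s,\X(s+\epsilon))$ and a \emph{space increment} $F(s,\X(s+\epsilon))-F(s,\X(s))$, keeping the time frozen at $s$ in the latter; this is the key bookkeeping choice, because it makes the first-order part of the space increment coincide \emph{exactly} with the $\epsilon$-approximation of the forward integral of Definition \ref{def:forward-integral}, with no spurious time shift.

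For the time increment I would write $F(s+\epsilon,\X(s+\epsilon))-F(s,\X(s+\epsilon))=\int_s^{s+\epsilon}\partial_\rho F(\rho,\X(s+\epsilon))\,\ud\rho$ and use the joint continuity of $\partial_r F$ and of $\X$ to obtain $\frac1\epsilon\int_0^t(\cdots)\,\ud s\to\int_0^t\partial_r F(r,\X(r))\,\ud r$. For the space increment I would apply the second-order Taylor formula with integral remainder,
\[
F(s,\X(s+\epsilon))-F(s,\X(s))=\langle\partial_xF(s,\X(s)),\Delta_\epsilon\X(s)\rangle+\int_0^1(1-a)\,\langle\partial^2_{xx}F(s,\X(s)+a\,\Delta_\epsilon\X(s)),(\Delta_\epsilon\X(s))^{\otimes2}\rangle\,\ud a,
\]
with $\Delta_\epsilon\X(s):=\X(s+\epsilon)-\X(s)$. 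The first-order piece averaged in $s$ is precisely $\frac1\epsilon\int_0^t\langle\partial_xF(s,\X(s)),\Delta_\epsilon\X(s)\rangle\,\ud s$, which by hypothesis (ii) (applicable since $(r,x)\mapsto\partial_xF(r,x)$ is continuous with values in $D(A^*)$) converges to $\int_0^t\langle\partial_xF(r,\X(r)),\ud^-\X(r)\rangle$.

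The main obstacle is the remainder (second-order) term, which should converge to $\tfrac12\int_0^t{}_{(H\hat\otimes_\pi H)^*}\langle\partial^2_{xx}F(r,\X(r)),\ud C(r)\rangle_{H\hat\otimes_\pi H}$. Here $\partial^2_{xx}F$ takes values in the \emph{full} dual $(H\hat\otimes_\pi H)^*$ and not in the Chi-subspace $\bar\chi$, so Proposition \ref{pr:DiGirolamiRusso37} does not apply directly; moreover, since $\X$ possesses neither a scalar nor a tensor quadratic variation, one cannot freeze the state argument $\X(s)+a\,\Delta_\epsilon\X(s)$ of the Hessian by the crude estimate $|\langle\cdot,(\Delta_\epsilon\X)^{\otimes2}\rangle|\le|\cdot|_{(H\hat\otimes_\pi H)^*}|\Delta_\epsilon\X|_H^2$, whose time-average is the (non-existent) scalar quadratic variation. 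This is exactly where hypothesis (i) is indispensable: it represents $[\X,\X]_{\bar\chi}$ by a bounded-variation process $C$ valued in the \emph{small} space $H\hat\otimes_\pi H$, so that the target integral is a genuine Stieltjes integral against an $H\hat\otimes_\pi H$-valued measure, stable under convergence of the integrand in the weak-$\ast$ topology of $(H\hat\otimes_\pi H)^*$.

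To handle this term I would use a smoothing argument. Fix an orthonormal basis $(e_i)$ of $H$ contained in $D(A^*)$ and let $\Pi_n$ be the orthogonal projection onto $\mathrm{span}(e_1,\dots,e_n)$; then $G_n(r):=\Pi_n^*\,\partial^2_{xx}F(r,\X(r))\,\Pi_n$ is a \emph{finite} combination of elementary tensors with factors in $D(A^*)$, hence a continuous $\bar\chi$-valued field. For each fixed $n$, Proposition \ref{pr:DiGirolamiRusso37} applies to $G_n$, and the state-freezing is now legitimate (the modulus of continuity of $r\mapsto\partial^2_{xx}F(r,\X(r))$ is measured in the $\bar\chi$-norm and paired against the $\bar\chi^*$-bound supplied by condition \textbf{H1}, exactly as in estimate \eqref{E54}), giving convergence of the $n$-th $\epsilon$-term to $\tfrac12\int_0^t\langle G_n(r),\ud C(r)\rangle$. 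Letting $n\to\infty$, $G_n(r)\to\partial^2_{xx}F(r,\X(r))$ pointwise on $H\hat\otimes_\pi H$ with uniformly bounded dual norm, so dominated convergence against the $H\hat\otimes_\pi H$-valued measure $\ud C$ yields the desired expression. The delicate and decisive step is the interchange of $\epsilon\to0$ and $n\to\infty$: one must bound $\frac1\epsilon\int_0^t\langle\partial^2_{xx}F(r,\X(r))-G_n(r),(\Delta_\epsilon\X(r))^{\otimes2}\rangle\,\ud r$ uniformly in $\epsilon$, and I expect this to be the hard part, since the naive bound reintroduces the scalar quadratic variation; the estimate must instead be carried through hypothesis (i), effectively establishing a strengthening of Proposition \ref{pr:DiGirolamiRusso37} valid for $(H\hat\otimes_\pi H)^*$-valued integrands when the $\bar\chi$-quadratic variation is carried, via (i), by an $H\hat\otimes_\pi H$-valued bounded-variation process. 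Collecting the three limits together with the trivial identity above yields \eqref{eq:Ito}.
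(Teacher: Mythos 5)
Your proposal has a genuine gap, and it is exactly the one you flag yourself: the interchange of $\epsilon\to 0$ and $n\to\infty$ in the second-order term. Your plan requires bounding
\[
\frac1\epsilon\int_0^t\Bigl\langle \partial^2_{xx}F\bigl(r,\X(r)+a\,\Delta_\epsilon\X(r)\bigr)-G_n(r),\ \bigl(\Delta_\epsilon\X(r)\bigr)^{\otimes 2}\Bigr\rangle \ud r
\]
uniformly in $\epsilon$, but the difference $\partial^2_{xx}F-G_n$ lives only in $(H\hat\otimes_\pi H)^*$, not in $\bar\chi$, so the only available pairing estimate is through the full dual norm, which produces $\frac1\epsilon\int_0^t|\Delta_\epsilon\X(r)|_H^2\,\ud r$ — the scalar quadratic variation, which does not exist for the processes this theorem is designed for (that non-existence is the raison d'\^etre of the Chi-subspace machinery). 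Writing the error as $\langle \partial^2_{xx}F,(\Delta_\epsilon\X)^{\otimes2}-(\Pi_n\Delta_\epsilon\X)^{\otimes2}\rangle$ does not help: controlling $|(\Delta_\epsilon\X)^{\otimes2}-(\Pi_n\Delta_\epsilon\X)^{\otimes2}|_{H\hat\otimes_\pi H}$ again involves $|\Delta_\epsilon\X|_H$ quadratically. Moreover the $\bar\chi$-norms of your projected fields $G_n$ blow up with $n$, so condition \textbf{H1} gives no bound that is simultaneously uniform in $\epsilon$ and in $n$. The ``strengthening of Proposition \ref{pr:DiGirolamiRusso37}'' you invoke is essentially the theorem to be proved, so the argument is circular at its decisive step.

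The paper's proof avoids the double limit altogether by projecting the \emph{function} rather than the Hessian along the path: it sets $F_N(t,x):=F(t,P_Nx)$, where $P_N$ is the orthogonal projection onto the span of an orthonormal basis $\{e_i^*\}\subset D(A^*)$. Then $\partial^2_{xx}F_N(t,x)=\sum_{i,j\le N}\langle \partial^2_{xx}F(t,P_Nx),e_i^*\otimes e_j^*\rangle\, e_i^*\otimes e_j^*$ is automatically a continuous $\bar\chi$-valued field, so the already-established It\^o formula of \cite{DiGirolamiRusso11} (their Theorem 5.2), combined with hypothesis (i), yields the identity \eqref{eq:Ito-N} \emph{exactly}, with no $\epsilon$ left, for each fixed $N$. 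One then passes to the limit $N\to\infty$ in that identity term by term: the forward-integral term converges because hypothesis (ii) plus Banach--Steinhaus for $F$-spaces makes $G\mapsto\int_0^t\langle G(r,\X(r)),\ud^-\X(r)\rangle$ a \emph{continuous} linear map on $UC([0,T]\times H;D(A^*))$ (note that (ii) is used here for all continuous $\Gamma$, not merely for $\partial_xF$ as in your sketch); the Hessian term converges by dominated convergence against the $H\hat\otimes_\pi H$-valued bounded-variation measure $\ud C$, since $\partial^2_{xx}F_N\to\partial^2_{xx}F$ uniformly on compacts in the $(H\hat\otimes_\pi H)^*$-norm; the time-derivative term is elementary. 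If you want to salvage your regularization-from-scratch route, you would first have to prove the cited Theorem 5.2 anyway (for $\bar\chi$-valued Hessians), and then you are back to the paper's single-limit structure; the direct two-parameter limit cannot be closed with the hypotheses at hand.
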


\begin{proof}
\emph{Step 1.} Let $\{e_i^*\}_{i\in \mathbb{N}}$ be an orthonormal basis of $H$ made of elements of $D(A^*)\subseteq H$. 
This is always possible since $D(A^*)\subseteq H$ densely embedded, via a Gram-Schmidt orthogonalization procedure. For $N\geq 1$ we denote by $P_N\colon H \to H$ the orthogonal projection on the span of the vectors 
$\{e_1^*, .. , e_N^*\}$. $P_\infty\colon H \to H$ simply denotes the identity.

Let us for a moment omit the time  dependence on $F$,
which is supposed to be of class $C^2$ from $H$ to $\R$.
We define $F_N\colon H \to \mathbb{R}$ as $F_N(x):= F(P_N (x))$. We have
\begin{equation} \label{57bis}
\partial_xF_N(x) = P_N \partial_xF(P_N (x))
\end{equation}
and 
\[
\partial_{xx}^2F_N(x) = (P_N\otimes P_N) \partial_{xx}^2 F (P_N (x)),
\]
where the latter  equality has to be understood as
\begin{multline}
\label{eq:3}
\,_{(H\hat\otimes_{\pi} H)^*}\! \left\langle \partial_{xx}^2 F_N (x) , h_1\otimes h_2 \right\rangle_{(H\hat\otimes_{\pi} H)} = 
\,_{(H\hat\otimes_{\pi} H)^*}\!\left\langle \partial_{xx}^2 F (P_N(x)) , (P_N(h_1))\otimes (P_N(h_2)) \right\rangle_{H\hat\otimes_{\pi} H},
\end{multline}
for all $h_1, h_2 \in H$. $\partial_{xx}^2 F_N (x)$ is an element of $(H\hat\otimes_{\pi} H)^*$ but it belongs to $(D(A^*)\hat\otimes_{\pi} D(A^*))$ as well; indeed it can be written as
\[
\sum_{i,j=1}^N \,_{(H\hat\otimes_{\pi} H)^*}\!\left\langle \partial_{xx}^2F(P_N (x)), e_i^*\otimes e_j^* \right\rangle_{H\hat\otimes_{\pi} H} \, \left ( e_i^* \otimes  e_j^* \right )
\]
and $e_i^*\otimes e_j^*$ are in fact elements of 
$D(A^*)\hat\otimes_{\pi} D(A^*)$.

We come back now again to the time dependence notation $F(t,x)$.
We can apply the It\^o formula proved in \cite{DiGirolamiRusso11}, 
Theorem 5.2, and with the help of Assumption (i), we find
\begin{multline}
\label{eq:Ito-N}
F_N(t,\X(t)) = F_N(0, \X(0)) + \int_0^t \left\langle \partial_xF_N(r, \X(r)), \ud^- \X(r) 
\right\rangle \\
+\frac12 \int_0^t   {}_{(H \hat\otimes_{\pi} H)^*} \left\langle \partial_{xx}^2 F_N(r, \X(r)),
 \ud C(r) \right\rangle_{H \hat\otimes_{\pi} H}  + \int_0^t \partial_r F_N(r, \X(r)) \ud r.
\end{multline}

\emph{Step 2.} We consider, for fixed $\epsilon >0$, the map
\[
\left \{
\begin{array}{l}
T_\epsilon \colon UC([0,T]\times H; D(A^*)) \to L^0(\Omega)\\[5pt]
T_\epsilon \colon  \displaystyle G \mapsto \int_0^t \left\langle G(r, \X(r)), \frac{\X(r+\epsilon) - \X(r)}{\epsilon} \right\rangle \ud r,
\end{array}
\right .
\]
where the set    $L^0(\Omega)$ of all real random variables
 is equipped with the topology of the convergence in probability. 
Assumption (ii) implies that $\lim_{\epsilon\to 0} T_\epsilon G$ exists for every $G$.
By Banach-Steinhaus for $F$-spaces (see Theorem 17, Chapter II in \cite{DunfordSchwartz58}) it follows that the map
\[
\left \{
\begin{array}{l}
UC([0,T]\times H; D(A^*)) \to L^0(\Omega)\\[5pt]
\displaystyle
G \mapsto \int_0^t \left\langle G(r, \X(r)), \ud ^-\X(r) \right\rangle
\end{array}
\right .
\]
is linear and continuous.

\emph{Step 3.} If $K\subseteq H$ is a compact set then the set $P(K) := \left \{ P_N(y) \; : \; y\in K, \; N\in  \mathbb{N}\cup {+\infty} \right \}$ is compact as well. To illustrate this fact we show that for any sequence of elements belonging to $P(K)$ we can extract a subsequence which converges to an element of $P(K)$. A generic sequence of elements in $P(K)$ has the form $\left \{ P_{N_l} (y_l) \right \}_{l \in \mathbb{N}}$ where, for any $l$, $N_l \in  \mathbb{N}\cup
 \{+\infty\}$ and $y_l \in K$. Since $K$ is compact we can assume,  without loss of generality, that $y_l$ converges, for ${l\to + \infty}$, to some $y\in K$. If $\{ N_l \}$ assumes only a finite number of values then (passing if necessary to a subsequence) $N_l \equiv \bar N$ for some 
$\bar N \in \mathbb{N}\cup \{+\infty\}$ and then $P_{N_l} (y_l) \xrightarrow{l\to+\infty} P_{\bar N} (y)$ that belongs of course to $P(K)$. Otherwise we can assume (passing if necessary to a subsequence) that $N_l \xrightarrow{l\to+\infty} +\infty$ and then 
it is not difficult to prove that $P_{N_l} (y_l) \xrightarrow{l\to+\infty} y$, which belongs to $P(K)$ since
$y = P_\infty y$. In particular, being $\partial_xF$ continuous,
\[
\mathcal{D}:= \left \{ \partial_xF (P_N (x)) \; : \; x\in K, \; 
N\in  \mathbb{N}\cup \{+\infty\} \right \}
\]
is compact in $D(A^*)$. Since the sequence of maps $\{ P_N \}$ is uniformly continuous, it follows that
\begin{equation}
\label{eq:unif-conv-1}
\sup_{x \in \mathcal{D}} \vert (P_N - I) (x) \vert  \xrightarrow{N\to\infty} 0. 
\end{equation}

\emph{Step 4.} We show now that 
\begin{equation}
\label{eq:first-term}
\lim_{N\to\infty} \int_0^t \left\langle \partial_xF_N(r, \X(r)), \ud^- \X(r) \right\rangle =
 \int_0^t \left\langle \partial_xF(r, \X(r)), \ud^- \X(r) \right\rangle
\end{equation}
holds in probability for every $t\in [0,T]$.
Let $K$ be a compact subset of $H$. In fact 
\[
\sup_{t \in [0,T], x\in K} |\partial_xF(t, P_N (x)) - \partial_xF(t,x)| 
 \xrightarrow{N\to\infty}  0,
\]
since $\partial_xF$ is continuous. On the other hand
\[
\sup_{t \in [0,T], x\in K} |(P_N - I)(\partial_xF(t, P_N x))|  
\xrightarrow{N\to\infty}  0,
\]
because of (\ref{eq:unif-conv-1}). Consequently, by \eqref{57bis}, 
\begin{equation} \label{eq:second}
\partial_x F_N  \to \partial_x F, 
\end{equation}
uniformly on each compact, with values in $H$.
This yields that 
$\omega$-a.s. 
\[
\partial_x F_N (r, \X (r)) \to \partial_x F (r, \X (r)),
\]
uniformly on each compact. By Step 2, then (\ref{eq:first-term}) follows.

%

\emph{Step 5.} Finally, we prove that
\begin{equation}
\label{eq:second-term}
\lim_{N\to\infty} \frac12 \int_0^t  {}_{(H \hat\otimes_{\pi} H)^*} \left \langle \partial_{xx}^2 F_N(r, \X(r)), \ud C(r) \right\rangle_{H \hat\otimes_{\pi} H} 
 = \frac12 \int_0^t  {}_{(H \hat\otimes_{\pi} H)^*} \left\langle \partial_{xx}^2 F(r, \X(r)) , \ud C(r) \right\rangle_{H \hat\otimes_{\pi} H}.
\end{equation}
For a fixed $\omega \in\Omega$ we define $K(\omega)$ the compact set as $K(\omega):= \left \{ \X(t)(\omega) \; : \; t\in[0,T] \right \}$. We write
\begin{multline}
\left |  \int_0^t  {}_{(H \hat\otimes_{\pi} H)^*}\left\langle \partial_{xx}^2F_N(r,\X(x)) - \partial_{xx}^2 F (r,\X(x)) , \ud C(r)
 \right\rangle_{H \hat\otimes_{\pi} H} \right | (\omega)\\
\leq
\sup_{\substack { y\in K(w)\\ t\in [0,T]}} \left | \partial_{xx}^2F_N(t,y) -
 \partial_{xx}^2F(t,y)  \right |_{(H\hat\otimes_{\pi} H)^*} \int_0^T \ud \vert C\vert_{\rm var}(r) (\omega),
\end{multline}
where $r \mapsto \vert C(r) \vert_{\rm var}$ is to total variation fonction
of $C$. Using arguments similar to those used in proving (\ref{eq:second}) one can see that $\partial^2_{xx} F_N \xrightarrow{N\to\infty} \partial^2_{xx} F$ uniformly on each compact. Consequently 
\[
\sup_{r\in [0,T]} | (\partial^2_{xx} F_N -
 \partial^2_{xx} F) (r, \X(r)) |_{(H \hat\otimes_\pi H)^*} \xrightarrow{N\to\infty} 0.
\]
Since $C$ has bounded variation, finally (\ref{eq:second-term}) holds.


\emph{Step 6.} Since $F_N$ (respectively $\partial_r F_N$) converges uniformly on each compact to 
$F$ (respectively $\partial_r F$), when $N\to\infty$, then 
$$ \int_0^t \partial_r F_N(r, \X(r)) \ud r   \xrightarrow{N\to\infty}  
\int_0^t \partial_r F (r, \X(r)) \ud r.$$
Taking the limit when $N\to\infty$ in
 (\ref{eq:Ito-N}), finally provides (\ref{eq:Ito}).
\end{proof}

\begin{Lemma}
\label{lm:per-lemmaIto}
The conditions (i) and (ii) of Theorem \ref{th:exlmIto} are verified if $\X = \M + \V + \S $, where  $\M$ is a local martingale, $\V$ is an $H$-valued bounded variation process, and  $\S$ is a process verifying
\[
\left\langle \S, h \right\rangle (t) = \int_0^t \left\langle \Z (r) ,
 A^* h \right\rangle \ud r, \qquad \text{for all } h\in D(A^*),
\]
for some  measurable process $\Z$ with $\int_0^T |\Z(r)|^2 \ud r <+\infty$ a.s. 
\end{Lemma}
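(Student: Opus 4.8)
The plan is to verify the two conditions separately: (i) is essentially a repetition of the analysis behind Corollary \ref{cor:X-barchi-Dirichlet}, while (ii) rests on the linearity of the forward integral in the integrator together with the defining relation for $\S$.

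For condition (i), I first observe that $\S$ has exactly the structure of the process $\Y$ in Lemma \ref{lm:conOndrejat}, with $\Z$ playing the role of $\X$. I would therefore rerun the computation of Proposition \ref{pr:Y-zero-chi-quad-var}: for $\phi\in\bar\nu_0=D(A^*)$ we have $\langle \S(r+\epsilon)-\S(r),\phi\rangle = \int_r^{r+\epsilon}\langle \Z(\xi), A^*\phi\rangle\,\ud\xi$, and since $|A^*\phi|\le|\phi|_{\bar\nu_0}$, Cauchy--Schwarz bounds $\frac1\epsilon\int_0^T |(\S(r+\epsilon)-\S(r))^{\otimes 2}|_{\bar\chi^*}\,\ud r$ by $\int_0^T\!\big(\int_r^{r+\epsilon}|\Z(\xi)|^2\,\ud\xi\big)\,\ud r\le \epsilon\int_0^{T+\epsilon}|\Z(\xi)|^2\,\ud\xi$, which tends to $0$; by Lemma \ref{RDGR}(ii) this gives that $\S$ has zero $\bar\chi$-quadratic variation. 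Being continuous of bounded variation, $\V$ has zero scalar quadratic variation, so by Proposition \ref{PFQVZ}, Lemma \ref{lm:RChiGlob}(i) and bilinearity of the $\bar\chi$-covariation (exactly as in Corollary \ref{cor:X-barchi-Dirichlet}) the process $\A:=\V+\S$ has zero $\bar\chi$-quadratic variation. Hence $\X=\M+\A$ is a $\bar\chi$-Dirichlet process, and, as in Corollary \ref{cor:X-barchi-Dirichlet} via Corollary \ref{Cor:codi}(iii), it is a $\bar\chi$-finite quadratic variation process with $[\X,\X]_{\bar\chi}=[\M,\M]_{\bar\chi}$. By Lemma \ref{lm:L411} this equals the restriction to $\bar\chi$ of the classical tensor covariation $[\M,\M]^{cl}$, which is a continuous bounded variation $(H\hat\otimes_\pi H)$-valued process; setting $C:=[\M,\M]^{cl}$ verifies (i).

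For condition (ii), I write $G(r):=\Gamma(r,\X(r))$, a continuous, adapted (hence predictable) $D(A^*)$-valued process, and split the $\epsilon$-approximations along $\X=\M+\V+\S$, so that it suffices to show that the three forward integrals $\int_0^\cdot\langle G,\ud^-\M\rangle$, $\int_0^\cdot\langle G,\ud^-\V\rangle$ and $\int_0^\cdot\langle G,\ud^-\S\rangle$ exist. Viewing $G$ as a c\`agl\`ad predictable $\mathcal{L}(H,\R)$-valued process, bounded along each trajectory, the first integral exists and coincides with the It\^o integral by Theorem \ref{th:integral-forrward=ito-martingale}, condition \eqref{EChainRule} holding because $\|G\|$ is pathwise bounded and $[\M]^{\R,cl}(T)<\infty$ a.s.; the second exists and equals the Lebesgue--Bochner integral by Proposition \ref{ItoBV}.

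The real point, and the step I expect to be the main obstacle, is the forward integral against $\S$, since $\S$ is neither a martingale nor of bounded variation and is only described through its $D(A^*)$-pairings, while $\Z$ is merely square integrable. Here I would exploit that $G(r)\in D(A^*)$ to rewrite $\langle G(r),\S(r+\epsilon)-\S(r)\rangle=\int_r^{r+\epsilon}\langle \Z(u),A^*G(r)\rangle\,\ud u$, so that after the change of variable $u=r+\epsilon s$ the $\epsilon$-approximation becomes $\int_0^1\!\int_0^t\langle \Z(r+\epsilon s),A^*\Gamma(r,\X(r))\rangle\,\ud r\,\ud s$. Since $r\mapsto A^*\Gamma(r,\X(r))$ is continuous and bounded on $[0,T]$ while $\Z(\cdot+\epsilon s)\to\Z$ in $L^2([0,T];H)$ by continuity of translations, the inner integral converges for each $s$ to $\int_0^t\langle \Z(r),A^*\Gamma(r,\X(r))\rangle\,\ud r$; a uniform bound in $s$ and dominated convergence then yield $\int_0^t\langle G(r),\ud^-\S(r)\rangle=\int_0^t\langle \Z(r),A^*\Gamma(r,\X(r))\rangle\,\ud r$, which is absolutely continuous in $t$ and hence furnishes the required continuous version. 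Summing the three pieces gives the existence of the forward integral in (ii), completing the verification.
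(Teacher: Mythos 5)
Your proposal is correct and takes essentially the same route as the paper: for (i) you build the $\bar\chi$-Dirichlet structure of $\X$ (zero $\bar\chi$-quadratic variation of $\V+\S$, then Corollary \ref{Cor:codi} and $C=[\M,\M]^{cl}$ via Lemma \ref{lm:L411}), and for (ii) you split along $\M+\V+\S$, handle $\M$ and $\V$ by Theorem \ref{th:integral-forrward=ito-martingale} and Proposition \ref{ItoBV}, and compute the $\S$-term directly from the defining identity $\left\langle \S,h\right\rangle(t)=\int_0^t\left\langle \Z(r),A^*h\right\rangle \ud r$. The only deviations are at the level of technique, not of approach: where the paper passes to the limit in the $\S$-term by a Fubini swap plus Lebesgue differentiation, you use an exact change of variables and $L^2$-continuity of translations, and you make explicit (via Cauchy--Schwarz, which is genuinely needed since $\Z$ is only square integrable, unlike the continuous $\X$ in Proposition \ref{pr:Y-zero-chi-quad-var}) the zero $\bar\chi$-quadratic variation of $\S$ that the paper leaves implicit.
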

\begin{proof}
By Lemma \ref{lm:L411}, $\M$ admits a global quadratic quadratic variation which can be identified with $[\M, \M]^{cl}$. On the other hand $\A = \V + \S$ has a zero $\bar \chi$-quadratic variation, by Proposition \ref{PFQVZ} and the  bilinearity character of the  $\bar \chi$-covariation.
$\X$  is therefore a $\bar \chi$-Dirichlet process. By Corollary \ref{Cor:codi} and again the bilinearity of the $\bar \chi$-covariation, we obtain that
$\X$ has a finite $\bar \chi$-quadratic variation. Taking also into account Lemma \ref{lm:L411} and Lemma \ref{lm:RChiGlob}(i), we get $\widetilde{[\X,\X]}_{\bar \chi}(\Phi)(\cdot) =  \langle [\M, \M]^{cl}, \Phi \rangle$ if $\Phi \in \bar \chi$. Consequently, we can set $C =  [\M, \M]^{cl}$ and condition (i)  is verified.

To prove (ii) consider a continuous function $\Gamma\colon [0,T] \times H \to D(A^*)$. The integral of   $(\Gamma(r,\X(r)))$ with respect to  the semimartingale  $\M + \V$ where $\M(t) = x + \int_0^t \sigma (r) \ud W_Q(r) $  exists and  equals the classical It\^o integral, by  Proposition \ref{th:integral-forrward=ito-martingale} and Proposition \ref{ItoBV}.  Therefore, we only have to prove that
\[
\int_0^t \left\langle \Gamma(r,\X(r)), \ud^- \S (r) \right\rangle,
 \qquad t\in [0,T],
\]
exists. For every $t\in [0,T]$ the $\epsilon$-approximation of such an integral gives, up to a remainder boundary term $C(\epsilon,t)$ which converges in probability to zero,
\begin{multline}
\label{eq:era38}
\frac{1}{\epsilon}\int_0^t \left\langle \Gamma(r,\X(r)), \S(r+\epsilon) - 
\S(r) \right\rangle \ud r 
=  \frac{1}{\epsilon}\int_0^t \int_{r}^{r+\epsilon} \left\langle \Z(u), A^*\Gamma(r,\X(r)) \right\rangle \ud u \ud r \\
=  \frac{1}{\epsilon}\int_0^t \int_{u-\epsilon}^{u} \left\langle \Z(u), A^*\Gamma(r,\X(r)) \right\rangle \ud r \ud u \xrightarrow{\epsilon \to 0} \int_0^t \left\langle \Z(u), A^*\Gamma(u,\X(u)) \right\rangle \ud u,
\end{multline}
in probability by classical Lebesgue integration theory. The right-hand side of (\ref{eq:era38}) has obviously a continuous modification so \eqref{ForwI} exists by definition and condition (ii) is fulfilled. 
\end{proof}

Next result can be considered a It\^o formula for mild type processes. An interesting  contribution
in this direction, but in a different spirit appears in \cite{RoecknerMild}.
\begin{Corollary}
\label{lm:Ito-pre}
Assume that $b$ is a 
predictable process with values in $H$ and $\sigma$ 
is
a predictable process  with values in $\mathcal{L}_2(U_0, H)$ satisfying
 (\ref{eq:hp-per-existence-mild}). Define $\X$ as in (\ref{eq:SPDE-mild}). Let $x$ be an element of $H$. Assume that $f\in C^{1,2}([0,T] \times H)$
 with $\partial_xf \in UC([0,T] \times H, D(A^*))$. Then,  $\mathbb{P}-a.s.$,
\begin{multline}
\label{eq:first-Dinkin-pre}
f(t,\X(t)) =  f(0,x)  + \int_0^t \partial_s f(r,\X(r)) \ud r  +  \int_0^t \left\langle A^* \partial_x f(r,\X(r)), \X(r) \right\rangle \ud r
 +  \int_0^t \left\langle \partial_x f(r,\X(r)), b(r) \right\rangle \ud r\\
+ \frac{1}{2}  \int_0^t \text{Tr} \left [\left ( \sigma (r) {Q}^{1/2} \right )
\left ( \sigma (r) Q^{1/2}  \right)^* \partial_{xx}^2 f(r,\X(r)) \right ] \ud r
+ \int_0^t \left\langle \partial_x f(r,\X(r)), \sigma(r) \ud \W_Q(r) \right\rangle,
\end{multline}
where the partial derivative $\partial^2_{xx} f(r, x)$ for any $r \in [0,T]$ and $x \in H$ stands in fact for its associated linear bounded operator
in the sense of \eqref{eq:expressionLB}.
\end{Corollary}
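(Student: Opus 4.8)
The plan is to obtain \eqref{eq:first-Dinkin-pre} as a direct consequence of the abstract It\^o formula of Theorem \ref{th:exlmIto}, after recasting $\X$ in the form handled by Lemma \ref{lm:per-lemmaIto} and then making each of the three resulting terms explicit.

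First I would decompose $\X = \M + \V + \S$, where $\M(t):= x + \int_0^t \sigma(r)\,\ud\W_Q(r)$, $\V(t):= \int_0^t b(r)\,\ud r$ and $\S:= \Y$ is the process defined in \eqref{eq:defY}. By Proposition \ref{PChainRule}(iii), $\M$ is a continuous local martingale, while $\V$ is plainly an $H$-valued bounded variation process. Lemma \ref{lm:conOndrejat} gives $\langle \S, h\rangle(t) = \int_0^t \langle \X(r), A^*h\rangle\,\ud r$ for every $h\in D(A^*)$, so that $\S$ is exactly of the type appearing in Lemma \ref{lm:per-lemmaIto} with $\Z = \X$; the required integrability $\int_0^T |\X(r)|^2\,\ud r<\infty$ a.s.\ follows from the a.s.\ boundedness of the paths of $\X$ on $[0,T]$. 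Thus Lemma \ref{lm:per-lemmaIto} applies and guarantees both that $\X$ admits a $\bar\chi$-quadratic variation and that conditions (i) and (ii) of Theorem \ref{th:exlmIto} hold, with $C = [\M,\M]^{cl}$.

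Next I would invoke Theorem \ref{th:exlmIto} with $F=f$ (whose hypotheses $f\in C^{1,2}$ and $\partial_x f\in UC([0,T]\times H; D(A^*))$ are precisely the ones needed), which yields the expansion \eqref{eq:Ito}. It then remains to rewrite the forward integral and the second-order term. Splitting the forward integral $\int_0^t \langle \partial_x f(r,\X(r)),\ud^-\X(r)\rangle$ along the decomposition $\X=\M+\V+\S$, the three pieces are identified as follows: the $\M$-contribution equals the It\^o integral $\int_0^t \langle \partial_x f(r,\X(r)), \sigma(r)\,\ud\W_Q(r)\rangle$ by Theorem \ref{th:integral-forrward=ito-martingale} together with Lemma \ref{lm:perPChainRule}; the $\V$-contribution equals $\int_0^t \langle \partial_x f(r,\X(r)), b(r)\rangle\,\ud r$ by Proposition \ref{ItoBV}; and the $\S$-contribution equals $\int_0^t \langle A^*\partial_x f(r,\X(r)), \X(r)\rangle\,\ud r$ by the Fubini-type computation \eqref{eq:era38} already carried out in the proof of Lemma \ref{lm:per-lemmaIto} (taken with $\Gamma = \partial_x f$ and $\Z = \X$). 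For the second-order term, Lemma \ref{lm:lemmaDPZ-covariation} furnishes $\ud C(r) = g(r)\,\ud r$, with $g(r)\in H\hat\otimes_\pi H$ associated with the nuclear operator $(\sigma(r)Q^{1/2})(\sigma(r)Q^{1/2})^*$; the duality pairing of $\partial_{xx}^2 f(r,\X(r))$ with $g(r)$ then coincides with $\mathrm{Tr}[(\sigma(r)Q^{1/2})(\sigma(r)Q^{1/2})^*\,\partial_{xx}^2 f(r,\X(r))]$ via the identification \eqref{eq:expressionLB}. Substituting these four expressions into \eqref{eq:Ito} produces \eqref{eq:first-Dinkin-pre}.

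I expect the main difficulty to be the bookkeeping in the third paragraph, specifically the linear splitting of the forward integral against $\X$ and the recovery of the $\langle A^*\partial_x f, \X\rangle$ term. This rests on reusing \eqref{eq:era38} and on the continuity of $\partial_x f$ as a $D(A^*)$-valued map, which makes $A^*\partial_x f(\cdot,\X(\cdot))$ a genuine continuous $H$-valued integrand; one must also check that the boundary remainders arising in the regularization vanish in probability. The trace identification, though it embodies the It\^o correction, is then a routine application of the nuclear/tensor duality recalled in the Appendix once $C$ has been pinned down as $[\M,\M]^{cl}$.
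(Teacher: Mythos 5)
Your proposal is correct and follows essentially the same route as the paper's proof: the same decomposition $\X=\M+\V+\S$ with $\S=\Y$ and $\Z=\X$, Lemma \ref{lm:per-lemmaIto} to secure conditions (i)--(ii) of Theorem \ref{th:exlmIto} with $C=[\M,\M]^{cl}$, the chain rule (Lemma \ref{lm:perPChainRule}) together with Theorem \ref{th:integral-forrward=ito-martingale} and Proposition \ref{ItoBV} to identify the forward-integral contributions, and Lemma \ref{lm:lemmaDPZ-covariation} with the nuclear/tensor duality of Proposition \ref{PPTTT} (i.e.\ \eqref{eq:expressionLB}) for the trace term. Your write-up merely spells out the bookkeeping that the paper leaves implicit.
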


\begin{proof}

It is a consequence of Theorem \ref{th:exlmIto} taking into account Lemma \ref{lm:per-lemmaIto}:
we have  $\M(t) = x + \int_0^t \sigma (r) \ud \W_Q(r), t \in [0,T]$,
$\V(t)  = \int_0^t b(r) dr$, $\S = \Y$ with  
$\Z(r) = \X(r)$. 
According to that lemma, in   Theorem  \ref{th:exlmIto}
 we set $C = [\M,\M]^{cl}$. We also use the chain rule
for It\^o's integrals in Hilbert spaces, see the considerations
before  Proposition
\ref{PChainRule},  together with Lemma \ref{lm:lemmaDPZ-covariation}.
The fourth integral in the right-hand side of 
\eqref{eq:first-Dinkin-pre} appears from the second integral
in \eqref{eq:Ito} using Proposition \ref{PPTTT}
and  Lemma \ref{lm:lemmaDPZ-covariation}.
\end{proof}

%
%
%
%

\subsection{Extension to fractional processes}
The class of convolution processes on which the concepts of Section \ref{SecTensor} can be applied can be extended to include fractional processes. We sketch this fact in a simplified framework. We consider again $H$ a separable Hilbert space and $A$ the generator of a $C_0$-semigroup on $H$.

Let $\B\colon [0,T] \times \Omega \to H$ be a continuous progressively measurable $H$-valued process with a finite number of modes of the form
\[
\B(t) := \sum_{i=1}^N h_i \beta_i(t),
\]
where, for any $i$, $h_i$ is a fixed elements of $H$ and $\beta_i$ is a $\gamma$-H\"older continuous real-valued process, $\gamma>\frac{1}{2}$. We suppose that, for any $i= 1,.., N$, the $H$-valued function $t\mapsto e^{tA}h_i$ (defined on $[0,T]$), is $\alpha$-H\"older continuous with values in $H$. By Proposition \ref{P25}
\[
\int_0^t e^{(t-s)A} d^-\B(s) \qquad \text{and} \qquad \int_0^t e^{(t-s)A} d^+\B(s)
\]
are well-defined and equal
\[
\sum_{i=1}^N \int_0^t e^{(t-s)A} h_i d^y \beta_i(s).
\]

\begin{Example}
Suppose that $A$ is the generator of an analytic semigroup on $H$ and that $0$ is in the resolvent of $A$. It is the case for instance of the heat semigroup with Dirichlet boundary condition acting on bounded and regular domains in $\mathbb{R}^m$. If we choose $h_i\in D((-A)^{\alpha})$ for $\alpha \in (0,1]$, we have (see e.g. \cite{Pazy83}, Theorem 6.13, page 74), for any $s,t \in [0,T]$ with $s<t$, $\left | (e^{tA} - e^{sA}) h_i \right | \leq C_T (t-s)^{\alpha} | h_i |_{D((-A)^{\alpha})}$ and then $t\mapsto e^{tA}h_i$ is $\alpha$-H\"older continuous.
\end{Example}

In the setting described above consider the process 
\[
\X_1 (t) := \X(t) + \int_0^t e^{(t-s)A} d^-\B(s),
\]
where $\X$ is of the form (\ref{eq:SPDE-mild}). We show now that Corollaries \ref{cor:X-barchi-Dirichlet} and \ref{lm:Ito-pre} also hold in the present extended framework.
\begin{Proposition}
The process $\X_1$ is a $\bar \chi$-Dirichlet process.
 Moreover it
 is also a $\bar\chi$ finite quadratic variation process
 and a $\bar\nu_0\hat\otimes_\pi \mathbb{R}$-weak-Dirichlet process.
\end{Proposition}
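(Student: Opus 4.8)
The plan is to follow the template of Corollary~\ref{cor:X-barchi-Dirichlet}, isolating the only genuinely new ingredient, namely the convolution remainder. Write $\X = \M + \A$ for the $\bar\chi$-Dirichlet decomposition of $\X$ given by Corollary~\ref{cor:X-barchi-Dirichlet}, where $\M(t) = x + \int_0^t \sigma(r)\ud\W_Q(r)$ is a continuous local martingale and $\A = \V + \Y$ is continuous, adapted, vanishes at $0$ and has zero $\bar\chi$-quadratic variation. Setting $\S(t) := \int_0^t e^{(t-s)A}\ud^-\B(s)$, which is continuous, adapted and satisfies $\S(0)=0$, we have $\X_1 = \M + \A_1$ with $\A_1 := \A + \S$ and $\A_1(0)=0$. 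Since $\M$ is already a continuous local martingale, the whole statement reduces to showing that $\A_1$ has zero $\bar\chi$-quadratic variation; the three claimed properties will then follow verbatim from Corollary~\ref{Cor:codi}, applied (as in Corollary~\ref{cor:X-barchi-Dirichlet}) with $\chi_0 = \bar\nu_0 = D(A^*)$, $H_1 = \mathbb{R}$ and $\nu = \bar\nu_0\hat\otimes_\pi\mathbb{R}$. Because $\A$ already has zero $\bar\chi$-quadratic variation, by bilinearity of the $\bar\chi$-covariation it is enough to prove that $\S$ has zero $\bar\chi$-quadratic variation and that the cross term $[\A,\S]_{\bar\chi}$ vanishes.

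The core step is therefore the zero $\bar\chi$-quadratic variation of $\S$. As in Proposition~\ref{pr:Y-zero-chi-quad-var}, by Lemma~\ref{RDGR}(ii) it suffices to show $I(\epsilon) := \frac1\epsilon\int_0^T |(\S(r+\epsilon)-\S(r))^{\otimes 2}|_{\bar\chi^*}\ud r \to 0$ in probability. Identifying $\bar\chi^*$ with ${\mathcal Bi}(\bar\nu_0,\bar\nu_0;\mathbb{R})$ gives $|(\S(r+\epsilon)-\S(r))^{\otimes 2}|_{\bar\chi^*} = |\S(r+\epsilon)-\S(r)|_{\bar\nu_0^*}^2$, and since $\bar\nu_0 = D(A^*)$ is continuously embedded in $H$ we have $H\hookrightarrow\bar\nu_0^*$ and hence $I(\epsilon)\le \frac{C^2}\epsilon\int_0^T|\S(r+\epsilon)-\S(r)|_H^2\ud r$. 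The main obstacle is the path regularity of the convolution: I would prove that $\S$ has $\gamma$-Hölder continuous paths with $\gamma>\frac12$. Using $\S(t) = \sum_{i=1}^N\int_0^t e^{(t-s)A}h_i\,\ud^y\beta_i(s)$ from Proposition~\ref{P25}, this is delicate precisely because the integrand depends on the upper limit; I would split $\S(t+\epsilon)-\S(t) = \sum_i\left(\int_t^{t+\epsilon}e^{(t+\epsilon-s)A}h_i\,\ud^y\beta_i(s) + \int_0^t (e^{\epsilon A}-I)e^{(t-s)A}h_i\,\ud^y\beta_i(s)\right)$, bounding the boundary term by the Young estimate together with $|h_i(\beta_i(t+\epsilon)-\beta_i(t))| = O(\epsilon^\gamma)$ and the smoothing term using the $\alpha$-Hölder continuity of $u\mapsto e^{uA}h_i$. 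Granting $|\S(r+\epsilon)-\S(r)|_H\le L\epsilon^\gamma$ a.s. for a random $L$, we obtain $I(\epsilon)\le C^2 L^2 T\,\epsilon^{2\gamma-1}\to 0$, which also shows that $\S$ has zero scalar quadratic variation. A cleaner alternative, which bypasses the regularity estimate, is to write $\S = (\B - \B(0)) + \S'$, where $\langle\S'(t),z\rangle = \int_0^t\langle\S(r),A^*z\rangle\ud r$ for $z\in D(A^*)$ (the Young analogue of Lemma~\ref{lm:conOndrejat}), treat $\B-\B(0)$ as above since it is $\gamma$-Hölder, and handle $\S'$ by the estimate of Proposition~\ref{pr:Y-zero-chi-quad-var}, which only uses the a.s. boundedness of the continuous process $\S$; the obstacle then moves to establishing this weak formulation via Young integration by parts.

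Finally I would assemble the decomposition. The quantity $\frac1\epsilon\int_0^T|\A(r+\epsilon)-\A(r)|_{\bar\nu_0^*}^2\ud r$ tends to $0$ in probability as well, directly from its constituents, the bounded variation part $\V$ and the zero $\bar\chi$-quadratic variation part $\Y$ of Proposition~\ref{pr:Y-zero-chi-quad-var}, so by $|a+b|_{\bar\nu_0^*}^2 \le 2(|a|_{\bar\nu_0^*}^2 + |b|_{\bar\nu_0^*}^2)$ the same holds for $\A_1 = \A + \S$; equivalently, the cross term $[\A,\S]_{\bar\chi}$ is killed by Cauchy--Schwarz and Lemma~\ref{RDGR}(ii), since $|J(a\otimes b)|_{\bar\chi^*} = |a|_{\bar\nu_0^*}|b|_{\bar\nu_0^*}$ for $a,b\in H$. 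Thus $\A_1$ has zero $\bar\chi$-quadratic variation, $\X_1 = \M + \A_1$ is a $\bar\chi$-Dirichlet process, and Corollary~\ref{Cor:codi} yields that $\X_1$ is moreover a $\bar\chi$ finite quadratic variation process and a $\bar\nu_0\hat\otimes_\pi\mathbb{R}$-weak-Dirichlet process, completing the proof.
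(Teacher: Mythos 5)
Your proposal is correct and follows essentially the same route as the paper's own (sketched) proof: both hinge on showing that the Young convolution term $\S$ is $\gamma$-H\"older continuous with $\gamma>\tfrac12$, hence of zero scalar (and thus zero $\bar\chi$-) quadratic variation, and then conclude by additivity/bilinearity of the $\bar\chi$-covariation together with Corollary \ref{cor:X-barchi-Dirichlet} and Corollary \ref{Cor:codi}. You merely supply details that the paper compresses into ``using the properties of the Young integral'' and ``by additivity'' (the boundary/smoothing splitting of $\S(t+\epsilon)-\S(t)$ and the explicit Cauchy--Schwarz treatment of the cross term), so no substantive difference remains.
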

\begin{proof}[Sketch of the proof]
Using the properties of the Young integral one can show that the process $\A_1(t) := \int_0^t e^{(t-s)A} d^-\B(s)$ is H\"older continuous with parameter $\gamma>1/2$. So $\A_1$ has  a zero scalar quadratic variation and then, by additivity and Corollary \ref{cor:X-barchi-Dirichlet}, $\X_1$ has a $\bar\chi$ quadratic variation and
$[\X_1, \X_1]_{\bar\chi} = [\X,\X]_{\bar\chi}$. A similar argument shows that $\X_1$ is a  $\bar\nu_0\hat\otimes_\pi \mathbb{R}$-weak-Dirichlet process.
\end{proof}

\begin{Proposition}
\label{pr:last}
Suppose that hypotheses of Corollary \ref{lm:Ito-pre} hold. Suppose that the trajectories of the  process $b$ belong a.s. to $L^p(0,T; H)$ for any $p<2$. Then the formula (\ref{eq:first-Dinkin-pre}) still holds with the addition of the term $\int_0^t \left\langle \partial_x f(s, \X_1(s)), d^-\B(s) \right\rangle$
\end{Proposition}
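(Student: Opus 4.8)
The plan is to reduce to the general It\^o formula of Theorem \ref{th:exlmIto} applied to $\X_1$, exactly as Corollary \ref{lm:Ito-pre} reduces to it through Lemma \ref{lm:per-lemmaIto}; the only genuinely new feature is the forward integral against $\B$. First I would write $\X_1 = \M + \V + \S + \B$, with $\M(t) = x + \int_0^t \sigma(r)\ud\W_Q(r)$ (a local martingale), $\V(t) = \int_0^t b(r)\ud r$ (of bounded variation, since $b\in L^1$), $\B = \sum_i h_i\beta_i$, and $\S := \Y + \S_1$ where $\S_1 := \A_1 - \B$ and $\A_1(t) := \int_0^t e^{(t-s)A}\ud^-\B(s)$. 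Repeating the computation of Lemma \ref{lm:conOndrejat} for the pure convolution $\A_1$ (no drift, no initial datum) gives $\langle \S_1(t), h\rangle = \int_0^t \langle \A_1(r), A^*h\rangle\ud r$ for $h\in D(A^*)$; combined with Lemma \ref{lm:conOndrejat} applied to $\Y$ this yields $\langle \S(t), h\rangle = \int_0^t \langle \X_1(r), A^*h\rangle\ud r$, so $\S$ is of the mild type of Lemma \ref{lm:per-lemmaIto} with $\Z = \X_1$, and $\int_0^T|\X_1(r)|^2\ud r<\infty$ a.s. by continuity.

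Next I would verify the two hypotheses of Theorem \ref{th:exlmIto}. For (i), the preceding Proposition gives $[\X_1,\X_1]_{\bar\chi}=[\X,\X]_{\bar\chi}$, and Lemma \ref{lm:per-lemmaIto} identifies the latter with $[\M,\M]^{cl}$; hence $C=[\M,\M]^{cl}$ works. For (ii), given a continuous $\Gamma\colon[0,T]\times H\to D(A^*)$ I would split, by linearity of the forward integral,
\[
\int_0^t \langle \Gamma(r,\X_1(r)), \ud^-\X_1(r)\rangle = \int_0^t\langle \Gamma(r,\X_1(r)), \ud^-(\M+\V+\S)(r)\rangle + \int_0^t \langle \Gamma(r,\X_1(r)), \ud^-\B(r)\rangle.
\]
The first summand exists by the argument of Lemma \ref{lm:per-lemmaIto} (Theorem \ref{th:integral-forrward=ito-martingale} for $\M$, Proposition \ref{ItoBV} for $\V$, and the Lebesgue computation \eqref{eq:era38} for the mild part $\S$), the integrand being the continuous $D(A^*)$-valued process $\Gamma(\cdot,\X_1(\cdot))$.

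The crux is the last forward integral $\int_0^t\langle \Gamma(r,\X_1(r)), \ud^-\B(r)\rangle = \sum_i\int_0^t\langle \Gamma(r,\X_1(r)), h_i\rangle\,\ud^-\beta_i(r)$. To produce it I would invoke Proposition \ref{P25}: since each $\beta_i$ is $\gamma$-H\"older with $\gamma>1/2$, it suffices that $r\mapsto\langle \Gamma(r,\X_1(r)), h_i\rangle$ be $\alpha$-H\"older with $\alpha+\gamma>1$, i.e. $\alpha>1-\gamma$, and then the forward integral exists and equals the corresponding Young integral. As $\partial_xF$ (the relevant $\Gamma$) is locally Lipschitz, this reduces to $\X_1$ being H\"older of some exponent $>1-\gamma$ (note $1-\gamma<1/2$). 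Here the extra hypothesis enters: with $b\in L^p$ and $1/\gamma<p<2$, H\"older's inequality makes $\V$ of exponent $1-1/p>1-\gamma$, while $\A_1$ is $\gamma$-H\"older and, in the regular settings covered here (e.g. the analytic case of the Example above), the remaining components $e^{\cdot A}x$ and the stochastic convolution are H\"older of exponent arbitrarily close to $1/2>1-\gamma$. Establishing this H\"older regularity of the integrand is the main technical obstacle, and the only place the strengthened integrability of $b$ is used.

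Finally, with (i) and (ii) in hand, Theorem \ref{th:exlmIto} expands $f(t,\X_1(t))$, and I would identify the pieces of $\int_0^t\langle \partial_xf(r,\X_1(r)), \ud^-\X_1(r)\rangle$ exactly as in Corollary \ref{lm:Ito-pre}: the $\M$-piece gives $\int_0^t\langle \partial_xf(r,\X_1(r)), \sigma(r)\ud\W_Q(r)\rangle$ via the chain rule (Lemma \ref{lm:perPChainRule} and Proposition \ref{PChainRule}), the $\V$-piece gives $\int_0^t\langle \partial_xf(r,\X_1(r)), b(r)\rangle\ud r$, the mild $\S$-piece gives $\int_0^t\langle A^*\partial_xf(r,\X_1(r)), \X_1(r)\rangle\ud r$, and the term $\tfrac12\int_0^t \langle \partial^2_{xx}f(r,\X_1(r)),\ud C(r)\rangle$ produces the trace term through Lemma \ref{lm:lemmaDPZ-covariation} and Proposition \ref{PPTTT}. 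Adding $\int_0^t\partial_rf(r,\X_1(r))\ud r$ together with the leftover $\int_0^t\langle \partial_xf(r,\X_1(r)), \ud^-\B(r)\rangle$ reproduces \eqref{eq:first-Dinkin-pre} with $\X$ replaced by $\X_1$ and the announced additional term, recalling that $\X_1(0)=x$.
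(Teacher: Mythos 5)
Your proposal follows essentially the same route as the paper's own proof, which is itself only a sketch. Your decomposition is the paper's: the process you call $\S=\Y+\A_1-\B$ is exactly what the paper names $\Y_1$ (the paper's displayed definition $\Y_1:=\Y+\B$ is inconsistent with the identity it goes on to claim; consistency forces $\Y_1=\Y+\A_1-\B$, i.e.\ your $\S$). Your extension of Lemma \ref{lm:conOndrejat} is the same step the paper performs, except that the paper does it by applying that lemma to the regularized drift $(\B(s)-\B(s-\varepsilon))/\varepsilon$ and letting $\varepsilon\to 0$ through the backward integral and Proposition \ref{P25}; this detail is worth writing out, because Lemma \ref{lm:conOndrejat} as stated requires a Bochner-integrable drift, so its ``computation'' cannot be repeated verbatim for the integrator $\ud^-\B$. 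Finally, your treatment of the new term via Proposition \ref{P25}, with $b\in L^p$ for some $p>1/\gamma$ giving the drift primitive H\"older exponent $1-1/p>1-\gamma$, is exactly the content of the paper's Remark following the statement, including the caveat (shared by the paper) that H\"older regularity of the remaining components of $\X_1$ holds only in sufficiently regular settings.

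The one step you should not take literally is ``with (i) and (ii) in hand, Theorem \ref{th:exlmIto} expands $f(t,\X_1(t))$''. Hypothesis (ii) of that theorem quantifies over \emph{every} continuous $\Gamma\colon[0,T]\times H\to D(A^*)$, and for $\X_1$ this cannot be verified: existence of the forward integral $\int_0^t\langle \Gamma(r,\X_1(r)),\ud^-\B(r)\rangle$ is guaranteed only for integrands that are H\"older along the trajectories, and you indeed verify it only for ``the relevant $\Gamma$'', namely the locally Lipschitz $\partial_x f$. The quantifier is not cosmetic: the proof of Theorem \ref{th:exlmIto} uses (ii) through a Banach--Steinhaus argument on all of $UC([0,T]\times H;D(A^*))$, and it is precisely that continuity which allows passage to the limit in $\int_0^t\langle \partial_xF_N(r,\X(r)),\ud^-\X(r)\rangle$ knowing only that $\partial_xF_N\to\partial_xF$ uniformly on compacts. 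Hence the theorem cannot be invoked as a black box; one must rerun its proof with the $\B$-part split off and handled as a Young integral at the level of the approximants $\partial_xf_N=P_N\,\partial_xf(\cdot,P_N\cdot)$, and the limit $N\to\infty$ of $\sum_i\int_0^t\langle \partial_xf_N(r,\X_1(r)),h_i\rangle\,\ud^y\beta_i(r)$ then requires convergence of the integrands in a H\"older norm of exponent $>1-\gamma$, not merely uniform convergence; this does follow, by interpolation, from uniform convergence combined with the uniform local Lipschitz bounds on the $\partial_xf_N$, but it is an additional argument. This is what the paper's closing phrase ``performing similar arguments as those in the proof of Corollary \ref{lm:Ito-pre}'' glosses over; since you yourself isolated the H\"older regularity of the integrand as the crux, this is a repair of your final step rather than a different approach, but without it the reduction to Theorem \ref{th:exlmIto} is incomplete.
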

\begin{Remark}
Observe that $\int_0^t \left\langle \partial_x f(s, \X_1(s)), d^-\B(s) \right\rangle$ equals $\sum_{i=1}^N \int_0^t \partial_x f(s, \X_1(s)) h_i d^y \beta_i(s)$. Indeed $\int_0^\cdot \sigma(s) \ud \W(s)$ and $\V$ are $\alpha$-H\"older continuous for any $\alpha < 1/2$. So $\X$ is $\alpha$-H\"older continuous for any $\alpha < 1/2$ and the same holds for $\partial_x f(\cdot, \X_1(\cdot))$ since $\partial_x f$ is locally Lipschitz-continuous.
\end{Remark}
\begin{proof}[Sketch of the proof of Proposition \ref{pr:last}]
We define $\Y_1 (t) := \Y(t) + \B(t)$ where $\Y(t)$ is defined in (\ref{eq:defY}). Applying Lemma \ref{lm:conOndrejat} to $\Y^\varepsilon (t) := \Y(t) - \int_0^t \frac{\B(s) - \B(s - \varepsilon)}{\varepsilon} \ud s$ we obtain, for any $z\in D(A^*)$, $\left\langle \Y^\varepsilon(t), z \right\rangle = \int_0^t \left\langle \X^\varepsilon(s), A^*z \right\rangle \ud s$ where $\X^\varepsilon (t) := \X(t) + \int_0^t e^{(t-s)A}  \frac{\B(s) - \B(s - \varepsilon)}{\varepsilon} \ud s$. Using the fact that $\int_0^t e^{(t-s)A} d^+ \B(s) = \lim_{\varepsilon \to 0} \int_0^t e^{(t-s)A}  \frac{\B(s) - \B(s - \varepsilon)}{\varepsilon} \ud s$ the statement of Lemma \ref{lm:conOndrejat} extends to $\X_1$, $\Y_1$ replacing, respectively, $\X$ and $\Y$. Then, by Proposition \ref{pr:Y-zero-chi-quad-var}, $\Y_1$ has a zero $\bar\chi$ quadratic variation. 
The proof can be completed performing similar arguments as those in  the proof of Corollary \ref{lm:Ito-pre}.
\end{proof}

\color{black}

\begin{small}

\bibliographystyle{plain}

\bibliography{biblio}

\def\cprime{$'$}
\begin{thebibliography}{10}

\bibitem{BDDM07}
A.~Bensoussan, G.~Da~Prato, M.~C. Delfour, and S.~K. Mitter.
\newblock {\em Representation and control of infinite dimensional systems}.
\newblock Birkh\"auser, Boston, 2007.

\bibitem{BrzezniakSerrano13}
Z.~Brzezniak and R.~Serrano.
\newblock Optimal relaxed control of dissipative stochastic partial
  differential equations in banach spaces.
\newblock {\em SIAM J. Control Optim.}, 51(3):2664--2703, 2013.

\bibitem{coquet}
F.~Coquet, A.~Jakubowski, J.~M{\'e}min, and L.~S{\l}omi{\'n}ski.
\newblock Natural decomposition of processes and weak {D}irichlet processes.
\newblock In {\em In memoriam {P}aul-{A}ndr\'e {M}eyer: {S}\'eminaire de
  {P}robabilit\'es {XXXIX}}, volume 1874 of {\em Lecture Notes in Math.}, pages
  81--116. Springer, Berlin, 2006.

\bibitem{RoecknerMild}
G.~Da~Prato, A.~Jentzen, and M.~R\"ockner.
\newblock A mild {I}t\^o formula for {SPDE}s.
\newblock {\em Preprint Arxiv}, http://arxiv.org/pdf/1009.3526.pdf, 2011.

\bibitem{DaPratoZabczyk92}
G.~Da~Prato and J.~Zabczyk.
\newblock {\em Stochastic equations in infinite dimensions}.
\newblock Cambridge University Press, 1992.

\bibitem{DaPratoZabczyk96}
G.~Da~Prato and J.~Zabczyk.
\newblock {\em Ergodicity for infinite-dimensional systems}.
\newblock Cambridge University Press, 1996.

\bibitem{Denis}
L.~Denis.
\newblock Solutions of stochastic partial differential equations considered as
  {D}irichlet processes.
\newblock {\em Bernoulli}, 10(5):783--827, 2004.

\bibitem{DiGirolamiRusso09}
C.~Di~Girolami and F.~Russo.
\newblock Infinite dimensional stochastic calculus via regularization.
\newblock {\em Preprint HAL-INRIA},
  http://hal.archives-ouvertes.fr/inria-00473947/fr/, 2010.

\bibitem{DGRNote}
C.~Di~Girolami and F.~Russo.
\newblock {C}lark-{O}cone type formula for non-semimartingales with finite
  quadratic variation.
\newblock {\em C. R. Acad. Sci. Paris S\'er. I Math.}, 349(3-4):209 -- 214,
  2011.

\bibitem{DiGirolamiRusso11Fukushima}
C.~Di~Girolami and F.~Russo.
\newblock Generalized covariation and extended {F}ukushima decomposition for
  {B}anach space-valued processes. {A}pplications to windows of {D}irichlet
  processes.
\newblock {\em Infin. Dimens. Anal. Quantum Probab. Relat. Top.}, 15(2), 2012.

\bibitem{DiGirolamiRusso11}
C.~Di~Girolami and F.~Russo.
\newblock Generalized covariation for {B}anach space valued processes and
  {I}t{\^o} formula.
\newblock {\em Osaka journal of mathematics.}, 51(3), 2014.

\bibitem{DiestelUhl77}
J.~Diestel and J.~J. Uhl.
\newblock {\em Vector measures}.
\newblock AMS, Providence, 1977.

\bibitem{Dinculeanu00}
N.~Dinculeanu.
\newblock {\em Vector integration and stochastic integration in Banach spaces.}
\newblock Wiley, New York, 2000.

\bibitem{DunfordSchwartz58}
N.~Dunford and J.~T. Schwartz.
\newblock {\em Linear operators. {P}art {I}}.
\newblock Wiley, New York, 1988.

\bibitem{errami}
M.~Errami and F.~Russo.
\newblock {$n$}-covariation, generalized {D}irichlet processes and calculus
  with respect to finite cubic variation processes.
\newblock {\em Stochastic Process. Appl.}, 104(2):259--299, 2003.

\bibitem{FabbriRusso-preprint}
G.~Fabbri and F.~Russo.
\newblock Infinite dimensional weak {D}irichlet processes, stochastic {PDE}s
  and optimal control.
\newblock Preprint arXiv:1207.5710, Unpublished., 2012.

\bibitem{FoDir}
H.~F{\"o}llmer.
\newblock Dirichlet processes.
\newblock In {\em Stochastic integrals ({P}roc. {S}ympos., {U}niv. {D}urham,
  {D}urham, 1980)}, volume 851 of {\em Lecture Notes in Math.}, pages 476--478.
  Springer, Berlin, 1981.

\bibitem{FuhrmanTessitore}
M.~Fuhrman and G.~Tessitore.
\newblock Generalized directional gradients, backward stochastic differential
  equations and mild solutions of semilinear parabolic equations.
\newblock {\em Appl. Math. Optim.}, 51(3):279--332, 2005.

\bibitem{GawareckiMandrekar10}
L.~Gawarecki and V~Mandrekar.
\newblock {\em Stochastic differential equations in infinite dimensions with
  applications to stochastic partial differential equations}.
\newblock Springer, Heidelberg, 2011.

\bibitem{GozziRusso06Stoch}
F.~Gozzi and F.~Russo.
\newblock Verification theorems for stochastic optimal control problems via a
  time dependent {F}ukushima-{D}irichlet decomposition.
\newblock {\em Stochastic Process. Appl.}, 116(11):1530--1562, 2006.

\bibitem{GozziRusso06}
F.~Gozzi and F.~Russo.
\newblock Weak {D}irichlet processes with a stochastic control perspective.
\newblock {\em Stochastic Process. Appl.}, 116(11):1563--1583, 2006.

\bibitem{gubinelli_lejay_tindel}
M.~Gubinelli, A.~Lejay, and S.~Tindel.
\newblock Young integrals and {SPDE}s.
\newblock {\em Potential Anal.}, 25(4):307--326, 2006.

\bibitem{KaratzasShreve88}
I.~Karatzas and S.~E. Shreve.
\newblock {\em Brownian motion and stochastic calculus}.
\newblock Springer, New York, 1988.

\bibitem{KrylovRozovskii07}
N.~V. Krylov and B.~L. Rozovskii.
\newblock Stochastic evolution equations.
\newblock In P.~H. Baxendale and S.~V. Lototsky, editors, {\em Stochastic
  differential equations: theory and applications}, volume~2 of {\em
  Interdisciplinary Mathematical Sciences}, pages 1--70. World Scientific,
  2007.
\newblock Translated from Itogi Naukii Tekhniki, Seriya Sovremennye Problemy
  Matematiki.

\bibitem{Leon90}
J.~A. Leon.
\newblock Stochastic {F}ubini theorem for semimartingales in {H}ilbert space.
\newblock {\em Can. J. Math.}, 42(5):890--901, 1990.

\bibitem{maroeck}
Z.M. Ma and M.~R{\"o}ckner.
\newblock {\em Introduction to the theory of (nonsymmetric) {D}irichlet forms}.
\newblock Springer, Berlin, 1992.

\bibitem{Metivier82}
M.~M{\'e}tivier.
\newblock {\em Semimartingales: a course on stochastic processes}.
\newblock Gruyter, Berlin, 1982.

\bibitem{MetivierPellaumail80}
M.~M{\'e}tivier and J.~Pellaumail.
\newblock {\em Stochastic integration}.
\newblock Academic Press, New York, 1980.

\bibitem{Meyer77}
P.A. Meyer.
\newblock Notes sur les integrales stochastiques. {I} {I}nt\'egrales
  hilbertiennes.
\newblock In C.~Dellacherie, P.~Meyer, and M.~Weil, editors, {\em S\'eminaire
  de Probabilites XI}, volume 581 of {\em Lecture Notes in Mathematics}, pages
  446--481. 1977.

\bibitem{Ondrejat04}
M.~Ondrej{\'a}t.
\newblock Uniqueness for stochastic evolution equations in {B}anach spaces.
\newblock {\em Dissertationes Math. (Rozprawy Mat.)}, 426:63 p., 2004.

\bibitem{Pazy83}
A.~Pazy.
\newblock {\em Semigroups of linear operators and applications to partial
  differential equations}, volume~44 of {\em Applied Mathematical Sciences}.
\newblock Springer, New York, 1983.

\bibitem{PeszatZabczyk07}
S.~Peszat and J.~Zabczyk.
\newblock {\em Stochastic partial differential equations with {L}\'evy noise}.
\newblock Cambridge University Press, 2007.

\bibitem{pronk}
M.~Pronk and M.~Veraar.
\newblock Forward integration, convergence and non-adapted pointwise
  multipliers.
\newblock {\em Infin. Dimens. Anal. Quantum Probab. Relat. Top.},
  18(1):1550005, 23, 2015.

\bibitem{RussoVallois91}
F.~Russo and P.~Vallois.
\newblock Int\'egrales progressive, r\'etrograde et sym\'etrique de processus
  non adapt\'es.
\newblock {\em C. R. Acad. Sci. Paris S\'er. I Math.}, 312(8):615--618, 1991.

\bibitem{RussoVallois93}
F.~Russo and P.~Vallois.
\newblock Forward, backward and symmetric stochastic integration.
\newblock {\em Probab. Theory Related Fields}, 97(3):403--421, 1993.

\bibitem{RussoVallois93-Oslo}
F.~Russo and P.~Vallois.
\newblock Noncausal stochastic integration for l\`ad l\`ag processes.
\newblock In {\em Stochastic analysis and related topics ({O}slo, 1992)},
  volume~8 of {\em Stochastics Monogr.}, pages 227--263. Gordon and Breach,
  Montreux, 1993.

\bibitem{RussoVallois07}
F.~Russo and P.~Vallois.
\newblock Elements of stochastic calculus via regularization.
\newblock In {\em S\'eminaire de {P}robabilit\'es {XL}}, volume 1899 of {\em
  Lecture Notes in Math.}, pages 147--185. Springer, Berlin, 2007.

\bibitem{Ryan02}
R.~A. Ryan.
\newblock {\em Introduction to tensor products of {B}anach spaces}.
\newblock Springer, London, 2002.

\bibitem{Stein70}
E.~M. Stein.
\newblock {\em Singular integrals and differentiability properties of
  functions}.
\newblock Princeton University Press, 1970.

\bibitem{vanNeervenVeraarWeis07}
J.~M. A.~M. van Neerven, M.~C. Veraar, and L.~Weis.
\newblock Stochastic integration in umd {B}anach spaces.
\newblock {\em Ann. Probab.}, 35(4):1438--1478, 2007.

\bibitem{vanNeervenVeraarWeis08}
J.~M. A.~M. Van~Neerven, M.~C. Veraar, and L.~Weis.
\newblock Stochastic evolution equations in umd {B}anach spaces.
\newblock {\em J. Appl. Funct. Anal.}, 255(4):940--993, 2008.

\end{thebibliography}

\appendix

\section{Some functional analysis results}
\label{sec:preliminaries}

\subsection{Notation and basic facts}
\label{SBN}
Let us consider two real separable Banach spaces ${B_1}$ and ${B_2}$. 
We denote by $C(B_1; B_2)$ the set of the
continuous functions from  ${B_1}$ to ${B_2}$. 
It is a topological vector space if equipped with topology of the uniform convergence on compact sets.  If  ${B_2}=\mathbb{R}$ we often simply use the notation $C({B_1})$
 instead of $C({B_1};\mathbb{R})$. Similarly, given a real interval $I$,
 continuous ${B_2}$-valued
 functions defined on $I\times {B_1}$ while we use the lighter notation 
$C(I \times {B_1})$ when ${B_2}=\mathbb{R}$.
 $C^1(I \times {B_1})$ denotes the space of 
Fr\'echet continuous differentiable functions 
$u: I \times B_1 \rightarrow \R$.
For a function $u: I \times B_1 \rightarrow \R$, we denote by $\partial _x u(t,x) $ (respectively $\partial^2_{xx}
 u(t,x) $), if it exists, the first (respectively second) Fr\'echet derivative  with respect to  the variable $x \in {B_1}$). A function $u \in C(I \times {B_1})$  (respectively $u  \in C^1(I \times {B_1})$) will be said to belong to  $C^{0,1}(I \times {B_1})$ (respectively $C^{1,2}(I \times {B_1})$) if $\partial _x  u$  exists and it is continuous, i.e. it belongs to $C(I \times {B_1};{B_1}^*)$  (respectively  $\partial^2_{xx} u (t,x$) exists for any $(t,x) \in [0,T] \times B_1$ and it is continuous, i.e.  it belongs to $C(I \times {B_1}; {\mathcal Bi}({B_1},{B_1}))$, where  ${\mathcal Bi}({B_1},{B_2}))$ is the linear topological
space of bilinear bounded forms on $B_1 \times B_2$. 

We denote by $\mathcal{L}({B_1}; {B_2})$ the space of linear bounded maps from ${B_1}$ to ${B_2}$ and  by $\Vert \cdot \Vert_{\mathcal{L}({B_1}; {B_2})}$ the corresponding norm. We indicate by a double bar, i.e. $\Vert \cdot \Vert$, the norm of an operator. Often we will consider the case of two separable Hilbert spaces $U$ and $H$. We denote $|\cdot|$ and $\left\langle \cdot , \cdot \right\rangle$ (respectively $|\cdot|_U$ and $\left\langle \cdot , \cdot \right\rangle_U$) the norm  and the inner product on $H$ (respectively $U$).

\begin{Notation} \label{NotHilbert}
If $H$ is a Hilbert space, in order to argue more transparently, we  often distinguish between $H$ and its dual $H^*$ and with every element $h\in H$ we associate $h^*\in H^*$ through Riesz Theorem.
\end{Notation}


If $U= H$, we set $\shl(U) := \shl(U;U)$. $\shl_2(U;H)$ will be the set of {\it Hilbert-Schmidt} operators from $U$ to $H$ and $\shl_1(H)$ (respectively $\shl_1^+(H)$) will be the space of (respectively non-negative)
{\it nuclear} operators on $H$. For  details about the notions of Hilbert-Schmidt and nuclear operator, 
the reader may consult \cite{Ryan02}, Section 2.6 and \cite{PeszatZabczyk07} Appendix A.2. If $ T \in \shl_2(U;H)$ and $T^\ast: H\rightarrow U$ is the adjoint operator, then $T T^\ast \in \shl_1(H)$ and the Hilbert-Schmidt
norm of $T$ is $\Vert T \Vert^2_{\shl_2(U;H)} = \Vert T T ^\ast \Vert_{\shl_1(H)}.$
We recall that, for a generic element $T\in {\mathcal L}_1(H)$ and given a basis $\left \{ e_n \right \}$ of $H$, the sum
$
\sum_{n=1}^{\infty} \left \langle T e_n, e_n  \right \rangle
$
is absolutely convergent and independent of the chosen basis 
$\left \{ e_n \right \}$. It is called \emph{trace} of $T$
 and denoted by ${\mathrm Tr}(T)$. 
$\shl_1(H)$ is a Banach space and we denote by $\Vert \cdot \Vert_{\shl_1(H)}$
the corresponding norm.
If $T$ is non-negative then ${\mathrm Tr}(T) = \| T \|_{{\mathcal L}_1 (H)}$ and
 in general we have the inequalities
\begin{equation}
\label{eq:trace-h1}
|{\mathrm Tr}(T)| \leq \| T \|_{{\mathcal L}_1 (H)}, \quad
\sum_{n=1}^{\infty} |\left \langle T e_n, e_n  \right \rangle| \leq \| T
 \|_{{\mathcal L}_1 (H)},
\end{equation}
see Proposition C.1,
\cite{DaPratoZabczyk92}.
As a consequence,  if $T$ is a non-negative operator,  the  relation below holds:
\begin{equation} \label{EHSQ}
\Vert T \Vert_{\shl_2(U;H)}^2 = {\mathrm Tr} (T T^\ast).
\end{equation}

\subsection{Projective norms on tensor products}
\label{SRN}

Consider two real separable Banach spaces $B_1$ and $B_2$. Denote, for $i=1,2$, with 
$|\cdot|_{B_i}$ the norm on $B_i$.
$B_1\otimes B_2$ stands for the \emph{algebraic tensor product} i.e.
 the set of the elements of the form $\sum_{i=1}^n x_i\otimes y_i$ where
 $x_i$ and  $y_i$ are respectively elements of $B_1$ and $B_2$. 
On $B_1\otimes B_2$ we identify all the expressions we need in order to ensure 
that the product $\otimes\colon B_1\times B_2 \to B_1\otimes B_2$ is bilinear.

On $B_1\otimes B_2$ we introduce the \emph{projective norm $\pi$} defined, for all $u\in B_1\otimes B_2$, as
\[
\pi(u) := \inf \left \{ \sum_{i=1}^n  |x_i|_{B_1} |y_i|_{B_2} \; : \; 
u = \sum_{i=1}^{n}  x_i \otimes y_i  \right \}.
\]
The \emph{projective tensor product of $B_1$ and $B_2$}, $B_1\hat\otimes_\pi B_2$, is the Banach space obtained as completion of $B_1\otimes B_2$ for the norm $\pi$, see \cite{Ryan02} Section 2.1, or \cite{DiGirolamiRusso09} for further details.  
%
%
For any $x\in B_1$ and $y\in B_2$ one has $\pi(x\otimes y) = |x|_{B_1} |y|_{B_2}$, see \cite{Ryan02} Chapter 6.1 for details.

\begin{Notation} \label{Not1}
When $B_1=B_2$ and $x\in B_1$ we denote  by  $x^{\otimes 2}$ the element $x\otimes x \in B_1 \otimes B_1$. 
\end{Notation}
The first natural example of element of the dual space $(B_1 \otimes B_2)^*$
can be constructed with a couple $a^*$ and $b^*$ respectively elements
of $B_1^*$ and $B_2^*$.

\begin{Lemma}
\label{lm:che-era-remark}
Let $B_1$ and $B_2$ be two real separable Banach spaces.
 We denote  $B := B_1\hat\otimes_\pi B_2$.
Choose $a^* \in B_1^*$ and  $b^* \in B_2^*$. 
One can associate with $a^*\otimes b^*$ the element $\ell$
of $B^*$ , denoted by $i(a^*\otimes b^*)$, 
acting as follows on a generic element $u = \sum_{i=1}^{n}  x_i \otimes y_i \in B_1\otimes B_2$:
\begin{equation}
\label{eq:defiastarbstar}
\white{\rangle}_{B^*}\left\langle i(a^*\otimes b^*), u \right \rangle_{B} = \sum_i^n \white{\rangle}_{B_1^*} \left\langle a^*, x_i \right\rangle_{B_1} \!\!
\white{\rangle}_{B^*_2} \left\langle b^*, y_i \right\rangle_{B_2}.
\end{equation}
Then $i(a^*\otimes b^*)$ extends by continuity to the whole
 $B$ and  
\begin{equation} \label{TensEq}
 \vert i(a^*\otimes b^*) \vert_{B^*}  =  |a^*|_{B_1^*}|b^*|_{B_2^*}.
\end{equation}



\end{Lemma}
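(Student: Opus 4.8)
The plan is to construct $i(a^*\otimes b^*)$ first as a linear functional on the algebraic tensor product $B_1\otimes B_2$ equipped with the projective norm $\pi$, then to extend it by density to $B=B_1\hat\otimes_\pi B_2$, computing its norm in the process. First I would introduce the bilinear form $\beta\colon B_1\times B_2\to\mathbb{R}$, $\beta(x,y):=\langle a^*,x\rangle_{B_1}\langle b^*,y\rangle_{B_2}$, which is manifestly bilinear and satisfies $|\beta(x,y)|\le |a^*|_{B_1^*}|b^*|_{B_2^*}|x|_{B_1}|y|_{B_2}$. By the universal property of the algebraic tensor product, every bilinear form on $B_1\times B_2$ factors uniquely through a linear map on $B_1\otimes B_2$; applying this to $\beta$ produces a well-defined linear functional $\ell$ on $B_1\otimes B_2$ with $\ell(x\otimes y)=\beta(x,y)$, which is precisely the formula \eqref{eq:defiastarbstar}. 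The essential content here is that the right-hand side of \eqref{eq:defiastarbstar} does not depend on the chosen representation $u=\sum_i x_i\otimes y_i$, and this independence is exactly what the universal property guarantees.

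Next I would bound $\ell$ in the projective norm. For any representation $u=\sum_{i=1}^n x_i\otimes y_i$ one has $|\ell(u)|\le \sum_i |\langle a^*,x_i\rangle||\langle b^*,y_i\rangle|\le |a^*|_{B_1^*}|b^*|_{B_2^*}\sum_i |x_i|_{B_1}|y_i|_{B_2}$; taking the infimum over all such representations and recalling the definition of $\pi$ yields $|\ell(u)|\le |a^*|_{B_1^*}|b^*|_{B_2^*}\,\pi(u)$. Since $B_1\otimes B_2$ is dense in $B$ by construction, $\ell$ extends uniquely to a bounded functional $i(a^*\otimes b^*)\in B^*$ with $|i(a^*\otimes b^*)|_{B^*}\le |a^*|_{B_1^*}|b^*|_{B_2^*}$.

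Finally I would establish the reverse inequality to obtain \eqref{TensEq}. Assuming $a^*\neq 0$ and $b^*\neq 0$ (the zero case being trivial), for each $\delta>0$ I would choose, from the definition of the dual norms, unit vectors $x\in B_1$ and $y\in B_2$ with $\langle a^*,x\rangle\ge |a^*|_{B_1^*}-\delta$ and $\langle b^*,y\rangle\ge |b^*|_{B_2^*}-\delta$. Recalling the identity $\pi(x\otimes y)=|x|_{B_1}|y|_{B_2}=1$ recorded in the paper, the evaluation $|i(a^*\otimes b^*)(x\otimes y)|=|\langle a^*,x\rangle||\langle b^*,y\rangle|\ge (|a^*|_{B_1^*}-\delta)(|b^*|_{B_2^*}-\delta)$ forces $|i(a^*\otimes b^*)|_{B^*}\ge (|a^*|_{B_1^*}-\delta)(|b^*|_{B_2^*}-\delta)$, and letting $\delta\to 0$ gives $|i(a^*\otimes b^*)|_{B^*}\ge |a^*|_{B_1^*}|b^*|_{B_2^*}$. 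Combined with the previous step this proves the identity.

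I do not anticipate a serious obstacle. The only genuinely delicate point is the well-definedness in the first paragraph, which is purely algebraic and dispatched by the universal property of the tensor product; the norm computation is then a direct consequence of the definition of $\pi$ together with the elementary fact $\pi(x\otimes y)=|x|_{B_1}|y|_{B_2}$ already available in the excerpt.
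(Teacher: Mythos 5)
Your proposal is correct and follows essentially the same route as the paper's own proof: well-definedness via the universal property of the algebraic tensor product, the upper bound $|i(a^*\otimes b^*)|_{B^*}\le |a^*|_{B_1^*}|b^*|_{B_2^*}$ by taking the infimum over representations in the definition of $\pi$, and the reverse inequality by evaluating on $\phi_1\otimes\phi_2$ with near-optimal unit vectors and letting $\delta\to 0$. Your explicit treatment of the zero case and of the density extension is slightly more careful than the paper's, but the substance is identical.
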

\begin{proof}
We first prove the $\leq$  inequality in \eqref{TensEq}. 
Observe first that the functional $i(a^* \otimes b^*)$ in (\ref{eq:defiastarbstar}) is well-defined thanks to the universal property of the tensor product, in particular if $\sum_{i=1}^{\tilde n}  \tilde x_i \otimes \tilde y_i$ is another representation of $u$ we have $\sum_i^{\tilde n}\!\! \white{\rangle}_{B_1^*}\left\langle a^*, \tilde x_i \right\rangle_{B_1} \!\! 
\white{\rangle}_{B^*_2} \left\langle b^*, \tilde y_i \right\rangle_{B_2} = \sum_i^n\!\! 
\white{\rangle}_{B_1^*} \left\langle a^*, x_i \right\rangle_{B_1}\!\! \white{\rangle}_{B^*_2} 
\left\langle b^*, y_i \right\rangle_{B_2}$. Since, for any 
possible representation, we have
\[
\left | \!\! \white{\rangle}_{B^*} \left\langle i(a^*\otimes b^*), u \right \rangle_{B} \right | \leq \sum_i^{\tilde n}
|a^*|_{B_1^*} |\tilde x_i|_{B_1} |b^*|_{B_2^*} |\tilde y_i|_{B_2},
\]
the $\leq$  inequality in \eqref{TensEq} follows by taking the infimum on the possible representations.

Concerning the converse inequality, we have
$|a^*|_{B_1^*} = \sup_{|\phi|_{B_1}=1} \white{\rangle}_{B_1^*\!}\left\langle a^*, \phi \right\rangle_{B_1}$ and similarly for $b^*$. So, chosen $\delta>0$, there exist $\phi_1\in B_1$ and $\phi_2\in B_2$ with $|\phi_1|_{B_1}=|\phi_2|_{B_2}=1$ and
\[
|a^*|_{B_1^*} \leq \delta + \white{\rangle}_{B_1^*\!}\left\langle a^*, \phi_1 \right\rangle_{B_1}, \qquad
|b^*|_{B_2^*} \leq \delta + \white{\rangle}_{B_2^*\!}\left\langle b^*, \phi_2 \right\rangle_{B_2}.
\]
We set $u:= \phi_1\otimes \phi_2$. We obtain
\begin{multline}
|i(a^*\otimes b^*)|_{B^*} \geq \frac{\white{\rangle}_{B^*\!}\left\langle i(a^*\otimes b^*), u \right\rangle_{B}}{|u|_{B}}=
\frac{\white{\rangle}_{B^*\!}\left\langle i(a^*\otimes b^*), u \right\rangle_{B}}{|\phi_1|_{B_1}|\phi_2|_{B_2}}\\
 = 
\white{\rangle}_{B_1^*\!}\left\langle a^*, \phi_1 \right\rangle_{B_1}  \white{\rangle}_{B_2^*\!}\left\langle 
 b^*, \phi_2 \right\rangle_{B_2}
\geq (|a^*|_{B_1^*} - \delta) (|b^*|_{B_2^*} - \delta).
\end{multline}
Since $\delta>0$ is arbitrarily small we finally obtain $|i(a^*\otimes b^*)|_{B^*} \geq |a^*|_{B_1^*} |b^*|_{B_2^*}$. This gives the second inequality and concludes the proof.
\end{proof}

The dual of the projective tensor product  $B_1\hat\otimes_\pi B_2$,
denoted by $(B_1\hat\otimes_\pi B_2)^*$, can be identified isomorphically
with the linear space of bounded bilinear forms on $B_1\times B_2$
 denoted  by   ${\mathcal Bi}(B_1,B_2)$. If $\ell \in (B_1\hat\otimes_\pi B_2)^*$
and $\psi_\ell $ is the associated form in ${\mathcal Bi}(B_1,B_2)$,
we have
\begin{equation} \label{E1} 
\vert \ell \vert_{(B_1\hat\otimes_\pi B_2)^*} = \sup_{\vert x \vert_{B_1} \le 1, 
\vert y \vert_{B_2} \le 1} \vert \psi_\ell(x,y) \vert. 
\end{equation}
See for this  \cite{Ryan02} Theorem 2.9  Section 2.2, page 22 and also
 the discussion after the proof of the theorem, page 23. 

\begin{Remark} \label{RIdentification}
We illustrate the mentioned identification in a particular case, see \cite{Ryan02} Chapter 4 for more.
Let $a^* \in B_1, b^* \in B_2$  as in Lemma \ref{lm:che-era-remark}
If $\ell = i(a^* \otimes b^*)$ then 
$\psi_\ell\left (x, y \right ):=  \left\langle a^*, x 
\right\rangle \left\langle b^*, y \right\rangle.$ More generally, 
by bilinearity, let 
$\ell=\sum_{i=1}^n a_i^*\otimes b_i^*$ for some $a_i^*\in B_1^*$ and $b_i^* \in B_2^*$. 
$i(\ell)$ can be identified with the element $\psi_\ell$ of
 ${\mathcal Bi}(B_1,B_2)$ acting as 
\begin{equation}\label{E10}
\psi_\ell\left (x, y \right ):= \sum_{i=1}^n \left\langle a_i^*, x 
\right\rangle \left\langle b_i^*, y \right\rangle.
\end{equation}
 \end{Remark}


We go on with consideration related to projective norms on
tensor products of Hilbert spaces.
Every  element $u \in H\hat\otimes_\pi H$  is isometrically associated 
with an element $T_u$ in the space of nuclear operators  
${\mathcal L}_1 (H,H)$, defined, for $u$ of the form
  $\sum_{i=1}^{\infty} a_n\otimes b_n$, as follows:
 \[
 T_u(x) := \sum_{i=1}^{\infty} \left\langle x, a_n \right\rangle b_n,
 \]
see for instance \cite{Ryan02} Corollary 4.8 Section 4.1 page 76.

$T_u$ is self-adjoint if and only if there exists a sequence of real numbers $(\lambda_n)$ and an orthonormal basis $(h_n)$ of $H$ such that
\begin{equation}
\label{eq:expressionu}
u= \sum_{n=1}^{+\infty} \lambda_n h_n \otimes h_n.
\end{equation}
The ``only if'' part is obvious. For the ``if'' part observe that, since $T_u$ is nuclear, it is compact (Proposition A.6 of \cite{PeszatZabczyk07}); if is also self-adjoint there exists (thanks to the spectral theorem) a sequence of real numbers $(\lambda_n)$ and an orthonormal basis $(h_n)$ of $H$ such that $T_u$ can be written as
\begin{equation}
\label{eq:expressionTu}
 T_u(x)= \sum_{n=1}^{+\infty} \lambda_n \left \langle h_n, x \right\rangle h_n, \qquad \text{for all $x\in H$};
\end{equation}
in particular $T_u(h_n) = \lambda_n h_n$ for each $n$ and $u$ can be written as in (\ref{eq:expressionu}).

To each element $\ell $ of $(H\hat\otimes_\pi H)^*$ we associate 
 a linear continuous operator 
$L_\ell: H \rightarrow H$ 
(see \cite{Ryan02} page 24, the discussion before Proposition 2.11 
Section 2.2) such that
\begin{equation}
\label{eq:expressionLB}
\left\langle L_\ell (x), y \right\rangle = \ell(x \otimes y) \;\;
(= \psi_\ell(x,y)) \qquad \text{for all $x, y \in H$}.
\end{equation}

\begin{Proposition} \label{PPTTT}
Let  $u \in  H\hat\otimes_\pi H$  and $\ell \in (H\hat\otimes_\pi H)^*$ 
with associated maps $T_u \in  \mathcal{L}_1(H), L_\ell \in   \mathcal{L}(H)$.
If $T_u$ is self-adjoint
\[
{}_{(H\hat\otimes_\pi H)^*} 
\langle \ell,u \rangle_{H\hat\otimes_\pi H} = {\rm Tr} \left (L_\ell T_u \right ).
\]
\end{Proposition}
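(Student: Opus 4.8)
The plan is to reduce the statement to the spectral representation of the self-adjoint nuclear operator $T_u$ and then to evaluate both sides explicitly on the corresponding eigenbasis. Since $T_u$ is self-adjoint, by the discussion around (\ref{eq:expressionTu}) there exist an orthonormal basis $(h_n)$ of $H$ and a sequence of reals $(\lambda_n)$ such that $T_u(x) = \sum_n \lambda_n \langle h_n, x\rangle h_n$ and, accordingly,
\[
u = \sum_{n=1}^{+\infty} \lambda_n\, h_n \otimes h_n.
\]
First I would check that this series converges absolutely in $H\hat\otimes_\pi H$: since $T_u$ is nuclear its eigenvalues satisfy $\sum_n |\lambda_n| < +\infty$, and by the isometry property recalled after Notation \ref{Not1} one has $\pi(h_n \otimes h_n) = |h_n|\,|h_n| = 1$, so $\sum_n |\lambda_n|\, \pi(h_n \otimes h_n) < +\infty$.

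Next I would use the continuity and linearity of $\ell \in (H\hat\otimes_\pi H)^*$ to pass it inside the sum and invoke the defining relation (\ref{eq:expressionLB}) of the operator $L_\ell$, which gives
\[
{}_{(H\hat\otimes_\pi H)^*}\langle \ell, u\rangle_{H\hat\otimes_\pi H} = \ell(u) = \sum_{n=1}^{+\infty} \lambda_n\, \ell(h_n \otimes h_n) = \sum_{n=1}^{+\infty} \lambda_n\, \langle L_\ell(h_n), h_n\rangle.
\]
For the right-hand side I would compute the trace in the very same basis $(h_n)$. The operator $L_\ell T_u$ is nuclear, being the composition of the bounded operator $L_\ell$ with the nuclear $T_u$, so by Appendix \ref{sec:preliminaries} its trace is well-defined and independent of the chosen basis; using the eigen-relation $T_u(h_n) = \lambda_n h_n$ one gets $L_\ell T_u(h_n) = \lambda_n L_\ell(h_n)$, whence
\[
{\rm Tr}(L_\ell T_u) = \sum_{n=1}^{+\infty} \langle L_\ell T_u(h_n), h_n\rangle = \sum_{n=1}^{+\infty} \lambda_n \langle L_\ell(h_n), h_n\rangle,
\]
which is exactly the expression obtained for the left-hand side, concluding the proof.

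The only delicate points — and the place where some care is needed — are the justification that the spectral series for $u$ converges in the projective norm, so that $\ell$ may legitimately be applied term by term, and the fact that the trace of $L_\ell T_u$ may be evaluated in the eigenbasis of $T_u$. Both follow from the nuclearity properties recalled in Appendix \ref{sec:preliminaries}, specifically the absolute summability $\sum_n|\lambda_n|<+\infty$ of the eigenvalues of a self-adjoint nuclear operator and the basis-independence of the trace; I do not expect any genuine obstacle beyond keeping these summability and ideal-property bookkeeping explicit.
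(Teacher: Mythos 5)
Your proof is correct and follows essentially the same route as the paper's: expand $u$ via the spectral representation (\ref{eq:expressionu}) available because $T_u$ is self-adjoint, apply $\ell$ term by term using the defining relation (\ref{eq:expressionLB}) of $L_\ell$, and identify the resulting sum $\sum_n \lambda_n \langle L_\ell(h_n), h_n\rangle$ with ${\rm Tr}(L_\ell T_u)$ computed in the eigenbasis $(h_n)$. The additional bookkeeping you supply (absolute convergence of the spectral series in the projective norm, and nuclearity of $L_\ell T_u$ to justify basis-independence of the trace) is accurate and simply makes explicit what the paper's brief proof leaves implicit.
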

\begin{proof}
The claim follows from what we have recalled above. Indeed, using (\ref{eq:expressionu}) and (\ref{eq:expressionLB}) we have
\[
{}_{(H\hat\otimes_\pi H)^*} 
\langle \ell,u \rangle_{H\hat\otimes_\pi H} = \ell(u) = 
\ell\left ( \sum_{i=1}^{+\infty} \lambda_n h_n \otimes h_n \right ) = 
\sum_{n=1}^{+\infty} \left \langle L_\ell (\lambda_n h_n), h_n \right\rangle
\]
and the last expression is exactly ${\rm Tr} \left (L_\ell T_u \right )$ when
 we compute it using the basis $h_n$.
\end{proof}

\begin{Lemma}
\label{lm:era52}
Let us consider a Banach space $\nu_1$ (respectively $\nu_2$) continuously embedded in $B_1^*$ (respectively $B_2^*$).  Then $\bar\chi := \nu_1 \hat\otimes_\pi \nu_2$ can be continuously embedded in
$(B_1\hat\otimes_\pi B_2)^*$.  In particular there exists a constant $C>0$
 such that,
for all $u\in \bar\chi$,
\begin{equation}
\label{eq:condizione-chi}
|\ell|_{(B_1\hat\otimes_\pi B_2)^*} \leq C |\ell|_{\bar\chi},
\end{equation}
after having identified an element of $\bar\chi$ with an element
of $(B_1\hat\otimes_\pi B_2)^*$, see Lemma \ref{lm:che-era-remark}.
In other words $\bar\chi$ is a Chi-subspace of $(B_1\hat\otimes_\pi B_2)^*$. In particular $B_1^* \hat\otimes_\pi B_2^*$  is a  Chi-subspace of    $(B_1\hat\otimes_\pi B_2)^*$.
\end{Lemma}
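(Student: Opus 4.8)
The plan is to build the embedding from the single‑tensor identification already established in Lemma~\ref{lm:che-era-remark}, and then to extend it to all of $\bar\chi$ by the universal (linearization) property of the projective tensor norm. First I would record the two continuous inclusions: since $\nu_1\hookrightarrow B_1^*$ and $\nu_2\hookrightarrow B_2^*$ are continuous, there are constants $c_1,c_2>0$ with $|a^*|_{B_1^*}\le c_1|a^*|_{\nu_1}$ for every $a^*\in\nu_1$ and $|b^*|_{B_2^*}\le c_2|b^*|_{\nu_2}$ for every $b^*\in\nu_2$.

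Next I would consider the bilinear map $\beta\colon \nu_1\times\nu_2\to (B_1\hat\otimes_\pi B_2)^*$ defined by $\beta(a^*,b^*):=i(a^*\otimes b^*)$, where $i$ is the map of Lemma~\ref{lm:che-era-remark}. By \eqref{TensEq} together with the two inclusions above,
\[
|\beta(a^*,b^*)|_{(B_1\hat\otimes_\pi B_2)^*}=|a^*|_{B_1^*}|b^*|_{B_2^*}\le c_1c_2\,|a^*|_{\nu_1}|b^*|_{\nu_2},
\]
so $\beta$ is bounded with $\|\beta\|\le c_1c_2$. The key step is then to invoke the universal property of the projective tensor product (see \cite{Ryan02}, Theorem~2.9, Section~2.2): every bounded bilinear map on $\nu_1\times\nu_2$ with values in a Banach space factors uniquely through $\nu_1\hat\otimes_\pi\nu_2$ as a bounded linear map of the same norm. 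Applied to $\beta$, this yields a bounded linear map $\Phi\colon\bar\chi=\nu_1\hat\otimes_\pi\nu_2\to(B_1\hat\otimes_\pi B_2)^*$ with $\|\Phi\|\le c_1c_2$ and $\Phi(a^*\otimes b^*)=i(a^*\otimes b^*)$ on simple tensors. This $\Phi$ is exactly the identification alluded to in the statement, and the norm bound gives \eqref{eq:condizione-chi} with $C=c_1c_2$.

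Finally, the last assertion follows by specializing to $\nu_1=B_1^*$ and $\nu_2=B_2^*$ (with $c_1=c_2=1$), so that $B_1^*\hat\otimes_\pi B_2^*$ is a Chi-subspace of $(B_1\hat\otimes_\pi B_2)^*$. I expect the only genuinely delicate point to be the consistent use of the identification: one must know that extending by linearity and continuity from simple tensors is independent of the chosen representation of $u\in\bar\chi$, which is precisely what the universal property guarantees. Note that injectivity of $\Phi$ is not needed here, since the paper's notion of Chi-subspace requires only the continuous norm bound \eqref{eq:condizione-chi} under the stated identification.
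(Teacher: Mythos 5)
Your proof is correct, but it reaches the conclusion by a different route than the paper. You invoke the universal (linearization) property of the projective tensor product: the bilinear map $\beta(a^*,b^*)=i(a^*\otimes b^*)$ is bounded by \eqref{TensEq} and the embedding constants, hence factors through $\nu_1\hat\otimes_\pi\nu_2$ as a bounded linear map of norm at most $c_1c_2$. The paper instead argues by hand: it works on the dense algebraic tensor product $\nu_1\otimes\nu_2$, identifies an element $\ell=\sum_i a_i^*\otimes b_i^*$ with the bilinear form $\psi_\ell\in{\mathcal Bi}(B_1,B_2)$ via \eqref{E10}, picks an $\epsilon$-optimal representation realizing $|\ell|_{\bar\chi}$, bounds $\|\psi_\ell\|_{{\mathcal Bi}(B_1,B_2)}\le\sum_i|a_i^*|_{\nu_1}|b_i^*|_{\nu_2}\le\epsilon+|\ell|_{\bar\chi}$, and then extends to all of $\bar\chi$ by density and continuity. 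The two arguments are equivalent in substance --- the proof of the linearization theorem is exactly this kind of estimate-plus-density argument --- but yours outsources the $\epsilon$-bookkeeping and the well-definedness question (independence of the representation) to a standard theorem, whereas the paper's version is self-contained modulo the duality identification $(B_1\hat\otimes_\pi B_2)^*\cong{\mathcal Bi}(B_1,B_2)$. One caveat on the citation: Theorem 2.9 of Section 2.2 in Ryan, as cited by the paper, is the scalar-valued duality statement; the vector-valued universal property you need is the linearization theorem stated earlier in Ryan's Chapter 2, so the reference should be adjusted. Finally, your closing remark is apt and matches the paper's own practice: neither proof establishes injectivity of the map $\bar\chi\to(B_1\hat\otimes_\pi B_2)^*$ (which can genuinely fail for projective tensor products), and the paper's operational definition of Chi-subspace only requires the norm inequality \eqref{eq:condizione-chi} under the canonical identification.
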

\begin{proof}


To simplify the notations assume the norm of the injections $\nu_1 
\hookrightarrow B_1^*$
 and $\nu_2 \hookrightarrow B_2^*$ to be less or equal than $1$.
We recall that  $(B_1\hat\otimes_\pi B_2)^*$ is isometrically identified 
with the Banach space ${\mathcal Bi}(B_1,B_2)$.

Consider first an element $\ell \in \bar\chi$ of the form 
 $\ell=\sum_{i=1}^n a_i^*\otimes b_i^*$ for some $a_i^*\in \nu_1$ and
 $b_i^* \in \nu_2$. By \eqref{E10} $\ell$ can be identified with the element 
$\psi_\ell$ of ${\mathcal Bi}(B_1,B_2)$ acting as
 $\psi_\ell\left ( x, y \right ):= \sum_{i=1}^n \left\langle a_i^*, x
 \right\rangle \left\langle b_i^*, \ell \right\rangle$. 
We can choose $a_i^*\in \nu_1$ and $b_i^*\in \nu_2$ such that $u=\sum_{i=1}^n a_i^*\otimes b_i^*$ and
\[
|\ell|_{\bar\chi} = \inf \left \{ \sum_{i=1}^n |x_i|_{\nu_1} |y_i|_{\nu_2} \; : \;
 \ell  = \sum_{i=1}^{n} x_i \otimes y_i, \qquad x_i\in \nu_1,\; y_i \in \nu_2  \right \}
 > -\epsilon + \sum_{i=1}^n |a_i^*|_{\nu_1} |b_i^*|_{\nu_2}.
\]
Using such an expression for $\ell$ we have 
\[
\| \psi_\ell \|_{{\mathcal Bi}(B_1,B_2)} = \sup_{|\phi|_{B_1}, |\psi|_{B_2} \leq 1} \left | \sum_{i=1}^n 
\white{\rangle}_{{B_1^*}}\left\langle a_i^*, \phi \right\rangle_{{B_1}} \white{\rangle}_{{B_2^*}}\left\langle b_i^*, \psi
 \right\rangle\white{\rangle}_{{B_2}} \right | 
\leq \sum_{i=1}^n |a_i^*|_{B_1^*} |b_i^*|_{B_2^*}
 \leq \sum_{i=1}^n |a_i^*|_{\nu_1} |b_i^*|_{\nu_2} \leq \epsilon + |\ell|_{\bar\chi}.
\]
Since $\epsilon$ is arbitrary, we conclude
 that $\| \ell \|_{{\mathcal Bi}(B_1,B_2)} \leq  |\ell|_{\bar\chi}$.

Since this proves that the  mapping  that associates to 
 $\ell \in \nu_1 \hat\otimes_\pi \nu_2$ its corresponding element in
 ${\mathcal Bi}(B_1,B_2)$, has norm $1$ on the dense subset 
$\nu_1 \hat\otimes_\pi \nu_2$, then the claim is proved.
\end{proof}

\begin{Lemma}
 \label{lemmaTensor} 
Let $H_1, H_2$ be two separable Hilbert spaces. Then
$H_1^*\hat\otimes_\pi H_2^*$ is sequentially dense in $(H_1\hat\otimes_\pi H_2)^*$
 in the weak-* topology.
\end{Lemma}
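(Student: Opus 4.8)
The plan is to exploit the isometric identification of $(H_1\hat\otimes_\pi H_2)^*$ with the space ${\mathcal Bi}(H_1,H_2)$ of bounded bilinear forms recalled in \eqref{E1}, and to approximate a given functional by finite-dimensional truncations built from orthonormal bases. First I would fix orthonormal bases $(e_i)_{i\ge 1}$ of $H_1$ and $(f_j)_{j\ge 1}$ of $H_2$, which are countable precisely because $H_1$ and $H_2$ are separable, and denote by $P_N$ and $Q_N$ the orthogonal projections onto the spans of $\{e_1,\dots,e_N\}$ and $\{f_1,\dots,f_N\}$. Given $\ell\in (H_1\hat\otimes_\pi H_2)^*$ with associated bilinear form $\psi_\ell$, I would set $\ell_N$ to be the bilinear form $(x,y)\mapsto \psi_\ell(P_N x,Q_N y)$.

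The key observation is that each $\ell_N$ actually lies in $H_1^*\hat\otimes_\pi H_2^*$. Writing $e_i^*\in H_1^*$ and $f_j^*\in H_2^*$ for the Riesz representatives of $e_i$ and $f_j$, one has $\psi_\ell(P_N x,Q_N y)=\sum_{i,j=1}^N \psi_\ell(e_i,f_j)\,\langle e_i^*,x\rangle\,\langle f_j^*,y\rangle$, so that, by \eqref{E10} and Remark \ref{RIdentification}, $\ell_N$ is identified with the finite algebraic tensor $\sum_{i,j=1}^N \psi_\ell(e_i,f_j)\, e_i^*\otimes f_j^*\in H_1^*\otimes H_2^*\subseteq H_1^*\hat\otimes_\pi H_2^*$. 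Moreover, since the orthogonal projections $P_N$ and $Q_N$ have norm at most $1$, formula \eqref{E1} yields $|\ell_N|_{(H_1\hat\otimes_\pi H_2)^*}=\sup_{|x|\le 1,\,|y|\le 1}|\psi_\ell(P_N x,Q_N y)|\le |\ell|_{(H_1\hat\otimes_\pi H_2)^*}$, so the sequence $(\ell_N)$ is uniformly bounded in the dual norm.

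Next I would verify the weak-$*$ convergence $\ell_N\to\ell$, that is $\langle \ell_N,u\rangle\to\langle \ell,u\rangle$ for every $u\in H_1\hat\otimes_\pi H_2$. On an elementary tensor $u=x\otimes y$ this is immediate: $\langle \ell_N,x\otimes y\rangle=\psi_\ell(P_N x,Q_N y)\to \psi_\ell(x,y)$, because $P_N x\to x$, $Q_N y\to y$ and $\psi_\ell$ is jointly continuous. By bilinearity the convergence then extends to the whole algebraic tensor product $H_1\otimes H_2$, which is dense in $H_1\hat\otimes_\pi H_2$.

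The final, and only slightly delicate, step is to upgrade convergence on the dense set $H_1\otimes H_2$ to convergence at every $u\in H_1\hat\otimes_\pi H_2$, and this is exactly where the uniform bound from the second paragraph is indispensable. For a general $u$ and $\delta>0$ I would choose $u'\in H_1\otimes H_2$ with $\pi(u-u')<\delta$ and estimate $|\langle \ell_N-\ell,u\rangle|\le \big(|\ell_N|_{(H_1\hat\otimes_\pi H_2)^*}+|\ell|_{(H_1\hat\otimes_\pi H_2)^*}\big)\,\delta+|\langle \ell_N-\ell,u'\rangle|$; since $\sup_N|\ell_N|_{(H_1\hat\otimes_\pi H_2)^*}\le |\ell|_{(H_1\hat\otimes_\pi H_2)^*}$ and the last term tends to $0$, letting $N\to\infty$ and then $\delta\to 0$ gives the claim. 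The main obstacle is precisely this interchange of limits, which could fail without the uniform bound $\sup_N|\ell_N|\le |\ell|$; separability enters to ensure countable bases, hence a genuine \emph{sequence} $(\ell_N)$ rather than a net, which is what \emph{sequentially} dense requires.
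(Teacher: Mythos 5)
Your proof is correct and takes essentially the same route as the paper: truncation of $\psi_\ell$ along orthonormal bases, a uniform dual-norm bound on the truncations, convergence on the dense algebraic tensor product, and a density argument to pass to all of $H_1\hat\otimes_\pi H_2$ (where the paper invokes Banach--Steinhaus, you use the explicit estimate with $\sup_N|\ell_N|_{(H_1\hat\otimes_\pi H_2)^*}\le|\ell|_{(H_1\hat\otimes_\pi H_2)^*}$, which is equally valid). If anything, your rectangular truncation $(x,y)\mapsto\psi_\ell(P_Nx,Q_Ny)$, i.e.\ the double sum $\sum_{i,j=1}^N\psi_\ell(e_i,f_j)\,e_i^*\otimes f_j^*$, is internally more consistent than the paper's displayed definition, which sums only the diagonal terms $\psi_\ell(e_i,f_i)$ yet is subsequently manipulated exactly as if it were $\psi_\ell(P_nx,Q_ny)$.
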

\begin{proof} 

Let $(e_i)$ and $(f_i)$ be respectively two orthonormal bases of $H_1$ 
and $H_2$.
  We denote  by $\mathcal{D}$ the linear span of finite linear combinations
 of $e_i \otimes f_i$.
 Let $\ell \in (H_1\hat\otimes_\pi H_2)^*$, which is a linear continuous functional on $H_1\hat\otimes_\pi H_2$.
 Taking into account the identification \eqref{E10}
 of $(H_1\hat\otimes_\pi H_2)^*$ with ${\mathcal Bi}(H_1, H_2)$,
 for each
 $n\in \mathbb{N}$, we define the continuous bilinear form
\begin{equation} \label{E20}
\psi^n(x,y) := \sum_{i=1}^{n} \left\langle x, e_i \right\rangle_{H_1}
  \left\langle y, f_i \right\rangle_{H_2} \psi_\ell(e_i, f_i),  \ x \in H_1,
y \in H_2.
\end{equation}
By \eqref{E10} it defines an element $\ell_n$ of $(H_1\hat\otimes_\pi H_2)^*$
 such that 
$\psi_{\ell_n} = \psi_n$ which coincides (via the application $i$) 
with $\sum_{i=1}^n(e_i^* \otimes f_i^*)$
 of $H_1^*\hat\otimes_\pi H_2^* \subset  (H_1\hat\otimes_\pi H_2)^*$. It remains 
to show that 
\[
\tensor[_{(H_1\hat\otimes_\pi H_2)^*}]{\big\langle \ell_n, l \big\rangle}
{_{H_1\hat\otimes_\pi H_2}} \xrightarrow{n\to\infty} \tensor[_{(H_1\hat\otimes_\pi H_2)^*}]{\big\langle \ell, l \big\rangle}{_{H_1\hat\otimes_\pi H_2}}, 
\quad \text{for all $l\in H_1\hat\otimes_\pi H_2$}.
\]
We show now the following.
\begin{itemize}
 \item[(i)] $\psi_n(x,y) \xrightarrow{n\to\infty} \psi_\ell(x,y)$
 for all $x\in H_1$, $y\in H_2$.
 \item[(ii)] For a fixed $l\in H_1\hat\otimes_\pi H_2$, the sequence $\ell_n(l)$ is bounded.
\end{itemize}
Let us prove first $(i)$. Let $x\in H_1$ and $y\in H_2$. Using \eqref{E20}
\begin{equation}
\label{eq:TE1}
\psi^n(x,y) = \psi_\ell \left ( \sum_{i=1}^n \left\langle x, e_i\right\rangle_{H_1} e_i,  \quad \sum_{i=1}^n \left\langle y, f_i\right\rangle_{H_2} f_i \right ).
\end{equation}
Since $\sum_{i=1}^n \left\langle x, e_i\right\rangle e_i \xrightarrow{n\to +\infty} x$ in $H_1$,
$\sum_{i=1}^n \left\langle y, f_i\right\rangle f_i \xrightarrow{n\to +\infty} y$ in $H_2$, and $\psi_\ell$ is a bounded bilinear form, the point (i) follows.

Let us prove now $(ii)$. Let $\epsilon>0$ fixed and
 $l_0 \in 
\mathcal{D}$ 
such that $|l-l_0|_{H_1\hat\otimes_\pi H_2} \leq \epsilon$. Then 
\begin{equation}
\label{eq:TE2}
|\ell_n(l)| \leq |\ell_n(l-l_0)| + |\ell_n(l_0)| \leq 
|\ell_n|_{(H_1\hat\otimes_\pi H_2)^*} |l-l_0|_{H_1\hat\otimes_\pi H_2} + |\ell_n(l_0)|.
\end{equation}
So (\ref{eq:TE2})  is bounded by 
\begin{eqnarray*}
\sup_{\vert x \vert_{H_1}, \vert y\vert_{H_2} \le 1} \sum_{i=1}^n 
\vert \left\langle x, e_i\right\rangle e_i \vert_{H_1}
\vert \left\langle y, f_i\right\rangle f_i \vert_{H_2}
|\ell|_{(H_1\hat\otimes_\pi H_2)^*}  \epsilon + \sup_{n} |\ell_n(l_0)|
\le |\ell|_{(H_1\hat\otimes_\pi H_2)^*}  \epsilon + \sup_{n} |\ell_n(l_0)|,
\end{eqnarray*}
recalling that the sequence $(\ell_n(l_0))$ is bounded, since it is convergent.
Finally (ii) is also proved. \\
At this point (i) implies that
\[
\tensor[_{(H_1\hat\otimes_\pi H_2)^*}]{\big\langle \ell_n, 
l \big\rangle}{_{H_1\hat\otimes_\pi H_2}} \xrightarrow{n\to\infty} \tensor[_{(H_1\hat\otimes_\pi H_2)^*}]{\big\langle \ell, l \big\rangle}{_{H_1\hat\otimes_\pi H_2}}, \quad \text{for all $l\in \mathcal{D}$}.
\] 
Since $\shd$ is dense in $H_1\hat\otimes_\pi H_2$ , the conclusion follows by 
 Banach-Steinhaus theorem, see Theorem 18, Chapter II in
 \cite{DunfordSchwartz58}.
\end{proof}

\bigskip
{\bf ACKNOWLEDGEMENTS.}
The authors are grateful to the Editor, Associated Editor and to both Referees
for their valuable comments which helped them to conisderably improve the
first version of the paper.
 It was partially written during the stay of the
second named author in the Bernoulli Center (EPFL Lausanne)
and at Bielefeld University, SFB 701 (Mathematik).\\
    The research was partially 
 supported by the 
ANR Project MASTERIE 2010 BLAN-0121-01.
The second named author also benefited partially from the support of the
 ``FMJH Program Gaspard Monge in optimization and operation research'' 
(Project 2014-1607H). 
The work of the first named author has been developed in the framework of
 the center
 of excellence LABEX MME-DII (ANR-11-LABX-0023-01).

\end{small}

\end{document}